\chardef\forshowkeys=0   \chardef\showllabel=0   \chardef\refcheck=0   \chardef\sketches=0 \usepackage{float} \floatstyle{boxed} \usepackage{marginnote} \usepackage[colorlinks=true, pdfstartview=FitV, linkcolor=blue, citecolor=blue, urlcolor=blue]{hyperref} \ifnum\forshowkeys=1      \usepackage[notref,notcite,color]{showkeys} \fi \ifnum\showllabel=1  \def\llabel#1{\marginnote{\color{lightgray}\rm\small(#1)}[-0.0cm]\notag} \else  \def\llabel#1{\notag} \fi \title{Asymptotic properties of the Boussinesq Equations with Dirichlet Boundary Conditions} \author[I.~Kukavica]{Igor Kukavica} \address{Department of Mathematics\\ University of Southern California\\ Los Angeles, CA 90089} \email{kukavica@usc.edu} \author[D.~Massatt]{David Massatt} \address{Department of Mathematics\\ University of Southern California\\ Los Angeles, CA 90089} \email{dmassatt@usc.edu} \author[M.~Ziane]{Mohammed Ziane} \address{Department of Mathematics\\ University of Southern California\\ Los Angeles, CA 90089} \email{ziane@usc.edu} \numberwithin{equation}{section}   \newtheorem{thm}{Theorem}[subsection]  \newtheorem{lemma}{Lemma}    \numberwithin{definition}{section} \numberwithin{thm}{section} \numberwithin{remark}{section} \numberwithin{prop}{section} \numberwithin{corollary}{section} \numberwithin{lemma}{section} \numberwithin{assumption}{section} \textwidth 16.4truecm \textheight 8in\oddsidemargin0.2truecm\evensidemargin0.7truecm\voffset-0.5truecm \def\tepsilon{{\tilde\epsilon}}  \def\uone{u^{(1)}} \def\utwo{u^{(2)}} \def\thetaone{\theta^{(1)}} \def\thetatwo{\theta^{(2)}} \def\rhoone{\rho^{(1)}} \def\rhotwo{\rho^{(2)}} \def\inon#1{\qquad{}\quad{}\hbox{#1}}                 \def\as#1{\qquad{}\quad{}\hbox{as #1}}                 \def\un{u^{(n)}} \def\unp{u^{(n+1)}}      \def\Pnp{P^{(n+1)}}      \def\thetanp{\theta^{(n+1)}}    \def\tzeta{\tilde\zeta} \def\asti{\text{\,\,\,\,\,\,as~$t\to\infty$}}  \definecolor{colororange}{rgb}{0.8,0.2,0} \definecolor{colorpurple}{rgb}{0.6,0.0,0.6}    \def\colb{\color{black}}   \def\indeq{\qquad{}} \def\startnewsection#1#2{\section{#1}\label{#2}\setcounter{equation}{0}\color{black}}               \def\curl{\mathop{\rm curl}\nolimits}   \def\dist{\mathop{\rm dist}\nolimits}   \def\supp{\mathop{\rm supp}\nolimits} \definecolor{colorgggg}{rgb}{0.1,0.5,0.3} \definecolor{colorllll}{rgb}{0.0,0.7,0.0} \definecolor{colorhhhh}{rgb}{0.5,0.0,0.3} \definecolor{colorpppp}{rgb}{0.7,0.0,0.2} \definecolor{coloroooo}{rgb}{0.9,0.4,0} \definecolor{colorqqqq}{rgb}{0.1,0.7,0}   \def\cole{\color{black}}       \def\comma{ {\rm ,\qquad{}} }                        \def\les{\lesssim}                \newcommand{\Laplace}{\Delta{}} \newcommand{\loc}{\text{loc}} \newcommand{\grad}{\nabla} \ifnum\refcheck=1   \usepackage{refcheck} \fi 
\begin{document} \begin{abstract} We address the asymptotic properties for the Boussinesq equations with vanishing thermal diffusivity in a bounded domain with no-slip boundary conditions. We show the dissipation of the $L^2$ norm of the velocity and its gradient, convergence of the $L^2$ norm of $Au$, and an $o(1)$-type  exponential growth for $\Vert A^{3/2}u\Vert_{L^2}$. We also obtain that in the interior of the domain the gradient of the vorticity is bounded by a polynomial function of time. \end{abstract} \baselineskip=.5truecm \date{\today}    \maketitle \tableofcontents \startnewsection{Introduction}{sec01} In this paper, we address the asymptotic behavior of the Boussinesq equations   \begin{align}   \begin{split}   & u_t - \nu \Laplace u + u \cdot \grad u + \grad p = \rho e_2    \\&   \rho_t + u \cdot \grad \rho = 0    \\&   \grad \cdot u = 0   \end{split}    \label{EWRTDFBSERSDFGSDFHDSFADSFSDFSVBASWERTDFSGSDFGFDGASFASDFDSFHFDGHFH58}   \end{align} with vanishing thermal/density diffusivity, in a smooth bounded domain $\Omega \subseteq \mathbb{R}^2$ with the Dirichlet boundary condition   \begin{align}   \begin{split}   u\bigl{|}_{\partial \Omega} = 0   \end{split}    \label{EWRTDFBSERSDFGSDFHDSFADSFSDFSVBASWERTDFSGSDFGFDGASFASDFDSFHFDGHFH126}   \end{align} and subject to the initial condition $(u(0),\rho(0))=(u_0 , \rho_0)$. Here, $u$ represents the velocity, $p$ the pressure, and $\rho$ the density or the temperature, depending on the physical context.  The 2D Boussinesq system of equations is used in a wide range of physical contexts, from large scale oceanic and atmospheric flows where rotation and stratification are significant to microfluids and biophysics. It also relates closely to fundamental models in fluid dynamics. In particular, the vorticity formulation of the incompressible Euler equations away from the singularity can be described by the 2D Boussinesq equations (cf.~\cite{DWZZ}). For simplicity of exposition, we shall  refer to the variable $\rho$ as the density, although it may also represent a temperature. \par While global existence results have been well-known in the case of positive viscosity and positive thermal diffusivity, i.e., when adding the term $-\kappa \Delta \rho$ in the equation for the density/temperature, we address here the case of vanishing thermal diffusivity. In the case when both viscosity $\nu$ and diffusion coefficients $\kappa$ vanish, the global existence and uniqueness remain open questions, although results on the local existence, blow-up criteria, explicit solutions, and finite time singularities have been proven; cf.~the blow-up results in \cite{CH,EJ}, based on the singularity creation theorem for the Euler equations by Elgindi~\cite{E}. The case $\nu>0$ and $\kappa=0$, considered here,  was initially considered by Chae~\cite{C} and Hou and Li~\cite{HL}. In particular, Hou and Li obtained the global existence and persistence of regularity in $H^{s}\times H^{s-1}$ for integer valued $s\geq3$ in the case of periodic boundary conditions. The paper \cite{LLT} by Lai~et~al extended the result in \cite{HL} to the Dirichlet boundary conditions. The persistence of regularity for the lower value $s=2$ in the case of Dirichlet or periodic boundary conditions was addressed in \cite{HKZ1}. 
Subsequently, Ju obtained in \cite{J} that $C e^{Ct^2}$ is an upper bound for the $H^{1}$ norm for the density, also for the Dirichlet boundary conditions. The bound was lowered to $e^{Ct}$ in \cite{KW2}, where also more precise results were obtained for periodic boundary conditions. In particular, \cite[Theorem~2.1]{KW2} contains a uniform in time upper bound for the quantity $ \Vert D^{2}u\Vert_{L^p}$ for all $p\geq2$ in the periodic case. In a recent paper by Doering~et~al~\cite{DWZZ}, the global existence, uniqueness, and regularity for the Boussinesq for the Lions boundary condition on a Lipschitz domain $\Omega$, was proven along with the dissipation of the $L^2$ norm of the velocity and its gradient. For other papers on the global existence and the regularity in Sobolev and Besov spaces, see \cite{ACW,ACSetal, BFL, BS, BrS,CD,CG,CN,CW,DP,HK1,HK2,HKR,HKZ2,HS,JMWZ,KTW,KW2,KWZ,LPZ,SW}. \colb \par In this paper, we prove several results on the asymptotic  behavior of solutions of the Boussinesq system \eqref{EWRTDFBSERSDFGSDFHDSFADSFSDFSVBASWERTDFSGSDFGFDGASFASDFDSFHFDGHFH58} with the Dirichlet boundary conditions \eqref{EWRTDFBSERSDFGSDFHDSFADSFSDFSVBASWERTDFSGSDFGFDGASFASDFDSFHFDGHFH126}. In our first main theorem, Theorem~\ref{T01},  we show that the $H^1$ norm of the velocity dissipates.  We also establish a balanced convergence of $Au$, cf.~\eqref{EWRTDFBSERSDFGSDFHDSFADSFSDFSVBASWERTDFSGSDFGFDGASFASDFDSFHFDGHFH10} below, where $A$ is the Stokes operator. Regarding the growth of the density, we prove that the first Sobolev norm of the density is bounded, up to a constant, by $e^{\epsilon t}$ for an arbitrarily small $\epsilon>0$, thus improving a result from \cite{KW2} where the bound of the type $e^{C t}$ was proven.  Since the growth of the Sobolev norms of the density is controlled by the time integral of $\Vert \nabla u\Vert_{L^\infty}$, it is reasonable to expect that the bound was optimal; however, here we prove that the optimal bound is in fact $e^{\epsilon t}$. It remains an open problem if one can achieve the estimate of the type $e^{C t^{\alpha}}$, where $\alpha\in[0,1)$. The theorem holds under the assumption that $(u_0,\rho_0)$ belongs to $H^2\times H^1$.  The ideas for the proof of Theorems~\ref{T01} draw from the approaches in \cite{DWZZ}, \cite{HKZ1}, \cite{LLT}, \cite{HKZ1}, \cite{J}, \cite{KW1}, and \cite{KW2}. Additionally, in Theorem~\ref{T03}, we show that the theorem and the persistence of regularity also hold under the $H^1\times H^{1}$ assumption on the initial data. \par In the second main theorem, Theorem~\ref{T02}, we address the behavior of the solution in a higher regularity norm. We prove that, under the $H^{3}\times H^{2}$ assumption on the initial data, that  for every $\epsilon>0$ the norm of $(u,\rho)$ in the $H^{3}\times H^{2}$ norm is bounded by $e^{\epsilon t}$, up to a constant depending on $\epsilon>0$. This holds under the $H^{3}\times H^{2}$ regularity of the initial data $(u_0,\rho_0)$. We point out that, as in Theorem~\ref{T03}, the same in fact holds under the $H^2\times H^2$ assumption on the data. \par In the last main theorem, Theorem~\ref{T04},  we consider the upper bound for the $L^p$ norm of the second derivatives of the velocity. As shown in \cite{HKZ1}, one may obtain a uniform bound when $p=2$. When $p>2$, this is not known except in the case of periodic boundary condition, which is a result obtained in \cite{KW1}. Here, we prove that we can obtain a polynomial in time bound in the interior of a domain when considering the Dirichlet boundary condition, which is considerably lower than $e^{\epsilon t}$ type bound that would result from applying the Gagliardo-Sobolev inequality on the conclusions of Theorem~\ref{T02}. The proof is obtained by the change of variable from \cite{KW1} combined with new localization arguments controlling the nonlocal nature of the transformation in \cite{KW1} (see the double cut-off strategy in the proof of Theorem~\ref{T04} below). \par We emphasize that all our results extend also in the often-studied problem of the channel with Dirichlet boundary conditions on top and the bottom and periodic boundary conditions on the sides. Also, our proofs are completely self-contained. \par \startnewsection{Main theorems}{sec02} We consider the asymptotic behavior of the Boussinesq equations   \begin{align}   \begin{split}   & u_t - \Laplace u + u \cdot \grad u + \grad p = \rho e_2    \\&   \rho_t + u \cdot \grad \rho = 0    \\&   \grad \cdot u = 0   \end{split}   \label{EWRTDFBSERSDFGSDFHDSFADSFSDFSVBASWERTDFSGSDFGFDGASFASDFDSFHFDGHFH01}   \end{align} and   \begin{align}   \begin{split}   u\bigl{|}_{\partial \Omega} = 0   ,   \end{split}   \label{EWRTDFBSERSDFGSDFHDSFADSFSDFSVBASWERTDFSGSDFGFDGASFASDFDSFHFDGHFH02}   \end{align} coupling the Navier-Stokes equations \cite{CF,DG,K1,K2,R,T1,T2,T3} for the velocity $u=(u_1,u_2)$ and the pressure $p$ with the equation for the density~$\rho$. The system is set on a smooth, bounded, and connected domain $\Omega \subseteq \mathbb{R}^2$ and supplemented with the initial condition   \begin{equation}    (u,\rho)(0)    =    (u_0,\rho_0)    \inon{in~$\Omega$}    .    \llabel{8Th sw ELzX U3X7 Ebd1Kd Z7 v 1rN 3Gi irR XG KWK0 99ov BM0FDJ Cv k opY NQ2 aN9 4Z 7k0U nUKa mE3OjU 8D F YFF okb SI2 J9 V9gV lM8A LWThDP nP u 3EL 7HP D2V Da ZTgg zcCC mbvc70 qq P cC9 mt6 0og cr TiA3 HEjw TK8ymK eu J Mc4 q6d Vz2 00 XnYU tLR9 GYjPXv FO V r6W 1zU K1W bP ToaW JJuK nxBLnd 0f t DEb Mmj 4lo HY yhZy MjM9 1zQS4p 7z 8 eKa 9h0 Jrb ac ekcEWRTDFBSERSDFGSDFHDSFADSFSDFSVBASWERTDFSGSDFGFDGASFASDFDSFHFDGHFH03}   \end{equation} Here, $u$ denotes the velocity, $p$ the pressure, and $\rho$ the density. Note that we set $\nu = 1$ for simplicity of exposition; all the results extend  to other values of~$\nu$ with  \def\FGSDFHGFHDFGHDFGH{\partial}constants depending additionally on~$\nu$. \par From \cite{CF,T1}, we recall the classical spaces    \begin{align}    H = \{u \in L^2(\Omega): \grad \cdot u = 0 \text{~in~$\Omega$}, u \cdot n = 0 \text{~on~$\FGSDFHGFHDFGHDFGH\Omega$}\}    ,    \llabel{i rexG 0z4n3x z0 Q OWS vFj 3jL hW XUIU 21iI AwJtI3 Rb W a90 I7r zAI qI 3UEl UJG7 tLtUXz w4 K QNE TvX zqW au jEMe nYlN IzLGxg B3 A uJ8 6VS 6Rc PJ 8OXW w8im tcKZEz Ho p 84G 1gS As0 PC owMI 2fLK TdD60y nH g 7lk NFj JLq Oo Qvfk fZBN G3o1Dg Cn 9 hyU h5V SP5 z6 1qvQ wceU dVJJsB vX D G4E LHQ HIa PT bMTr sLsm tXGyOB 7p 2 Os4 3US bq5 ik 4Lin 769O TkUEWRTDFBSERSDFGSDFHDSFADSFSDFSVBASWERTDFSGSDFGFDGASFASDFDSFHFDGHFH04}   \end{align} where $n$ denotes the outward unit normal, and   \begin{align}    V = \{u \in H_0^1(\Omega): \grad \cdot u = 0 \text{~in~$\Omega$} \}    ,    \llabel{xmp I8 u GYn fBK bYI 9A QzCF w3h0 geJftZ ZK U 74r Yle ajm km ZJdi TGHO OaSt1N nl B 7Y7 h0y oWJ ry rVrT zHO8 2S7oub QA W x9d z2X YWB e5 Kf3A LsUF vqgtM2 O2 I dim rjZ 7RN 28 4KGY trVa WW4nTZ XV b RVo Q77 hVL X6 K2kq FWFm aZnsF9 Ch p 8Kx rsc SGP iS tVXB J3xZ cD5IP4 Fu 9 Lcd TR2 Vwb cL DlGK 1ro3 EEyqEA zw 6 sKe Eg2 sFf jz MtrZ 9kbd xNw66c xf t lEWRTDFBSERSDFGSDFHDSFADSFSDFSVBASWERTDFSGSDFGFDGASFASDFDSFHFDGHFH05}    \end{align} utilized in the study of the Navier-Stokes equations. With $\mathbb{P}\colon L^2 \to H$ the Leray projector, denote by   \begin{align}    A = -\mathbb{P}\Laplace    ,    \llabel{zD GZh xQA WQ KkSX jqmm rEpNuG 6P y loq 8hH lSf Ma LXm5 RzEX W4Y1Bq ib 3 UOh Yw9 5h6 f6 o8kw 6frZ wg6fIy XP n ae1 TQJ Mt2 TT fWWf jJrX ilpYGr Ul Q 4uM 7Ds p0r Vg 3gIE mQOz TFh9LA KO 8 csQ u6m h25 r8 WqRI DZWg SYkWDu lL 8 Gpt ZW1 0Gd SY FUXL zyQZ hVZMn9 am P 9aE Wzk au0 6d ZghM ym3R jfdePG ln 8 s7x HYC IV9 Hw Ka6v EjH5 J8Ipr7 Nk C xWR 84T WnqEWRTDFBSERSDFGSDFHDSFADSFSDFSVBASWERTDFSGSDFGFDGASFASDFDSFHFDGHFH06}   \end{align} the Stokes operator with the domain $D(A) = H^2(\Omega) \cap V$. \par It is known that  for a sufficiently regular initial condition there exists a unique, global in time solution for \eqref{EWRTDFBSERSDFGSDFHDSFADSFSDFSVBASWERTDFSGSDFGFDGASFASDFDSFHFDGHFH01}--\eqref{EWRTDFBSERSDFGSDFHDSFADSFSDFSVBASWERTDFSGSDFGFDGASFASDFDSFHFDGHFH02} (cf.~\cite{C,HL}). In the first theorem, we obtain the asymptotic properties of  $A^{1/2}u$ and $A u$ in the energy norm. \par \cole \begin{thm} \label{T01} Let $(u_0 , \rho_0) \in (H^2(\Omega)\cap V) \times H^1(\Omega)$. Then the solution    \begin{equation}    (u,\rho)\in ( C([0,\infty);H) \cap L_{\loc}^2([0,\infty);D(A)) ) \times L^{\infty}_{\loc}([0,\infty),H^1(\Omega))       \llabel{ s0 fsiP qGgs Id1fs5 3A T 71q RIc zPX 77 Si23 GirL 9MQZ4F pi g dru NYt h1K 4M Zilv rRk6 B4W5B8 Id 3 Xq9 nhx EN4 P6 ipZl a2UQ Qx8mda g7 r VD3 zdD rhB vk LDJo tKyV 5IrmyJ R5 e txS 1cv EsY xG zj2T rfSR myZo4L m5 D mqN iZd acg GQ 0KRw QKGX g9o8v8 wm B fUu tCO cKc zz kx4U fhuA a8pYzW Vq 9 Sp6 CmA cZL Mx ceBX Dwug sjWuii Gl v JDb 08h BOV C1 pni6 4EWRTDFBSERSDFGSDFHDSFADSFSDFSVBASWERTDFSGSDFGFDGASFASDFDSFHFDGHFH07}   \end{equation} of \eqref{EWRTDFBSERSDFGSDFHDSFADSFSDFSVBASWERTDFSGSDFGFDGASFASDFDSFHFDGHFH01}--\eqref{EWRTDFBSERSDFGSDFHDSFADSFSDFSVBASWERTDFSGSDFGFDGASFASDFDSFHFDGHFH02} satisfies   \begin{equation}    \Vert Au\Vert_{L^2}\leq C       ,    \label{EWRTDFBSERSDFGSDFHDSFADSFSDFSVBASWERTDFSGSDFGFDGASFASDFDSFHFDGHFH08}   \end{equation} where $C$ depends on the size of the initial data, i.e., on the  norms $\Vert A u_0\Vert_{L^2}$ and $\Vert \rho_0\Vert_{H^1}$. Moreover,   \begin{align}   \begin{split}   \Vert A^{1/2}u\Vert_{L^2}   =   \Vert \grad u \Vert_{L^2} \to 0   \asti   ,   \end{split}    \label{EWRTDFBSERSDFGSDFHDSFADSFSDFSVBASWERTDFSGSDFGFDGASFASDFDSFHFDGHFH09}   \end{align} and   \begin{align}   \begin{split}   \Vert A u - \mathbb{P}(\rho e_2) \Vert_{L^2} \to 0   \asti   ,   \end{split}    \label{EWRTDFBSERSDFGSDFHDSFADSFSDFSVBASWERTDFSGSDFGFDGASFASDFDSFHFDGHFH10}     \end{align} and for every $\epsilon >0$ we have   \begin{align}   \begin{split}   \Vert \rho(t) \Vert_{H^1} \leq C_\epsilon e^{\epsilon t}    \comma t\geq 0    ,   \end{split}    \label{EWRTDFBSERSDFGSDFHDSFADSFSDFSVBASWERTDFSGSDFGFDGASFASDFDSFHFDGHFH11}   \end{align} where $C_\epsilon$ is a constant depending on $\epsilon$ and the size of initial data. \end{thm}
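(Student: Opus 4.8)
The plan is to treat the four assertions in the order in which the required estimates become available, extracting all of the decay from the transport structure of the density together with a potential-energy correction to the kinetic energy. The only exact conservation law is that $\rho$ solves the transport equation with $u$ divergence-free and vanishing on $\partial\Omega$, so every $L^p$ norm is preserved, $\Vert\rho(t)\Vert_{L^p}=\Vert\rho_0\Vert_{L^p}$; in two dimensions $H^1\hookrightarrow L^p$ for all finite $p$, so $\Vert\rho(t)\Vert_{L^2}$ and $\Vert\rho(t)\Vert_{L^4}$ stay bounded, which is what keeps the buoyancy forcing under control. Testing the momentum equation with $u$ gives $\frac12\ddt\Vert u\Vert_{L^2}^2+\Vert\grad u\Vert_{L^2}^2=\langle\rho,u_2\rangle$, while differentiating the potential energy yields $\ddt\int\rho\, x_2=\langle\rho,u_2\rangle$ as well. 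Subtracting, the functional $\mathcal E(t)=\frac12\Vert u\Vert_{L^2}^2-\int\rho\,x_2$ obeys $\ddt\mathcal E=-\Vert\grad u\Vert_{L^2}^2$, and since $|\int\rho\,x_2|\le\Vert\rho_0\Vert_{L^2}\Vert x_2\Vert_{L^2(\Omega)}$ is bounded, $\mathcal E$ is nonincreasing and bounded below. This delivers $\int_0^\infty\Vert\grad u\Vert_{L^2}^2\,dt<\infty$ together with a uniform bound on $\Vert u\Vert_{L^2}$; by the Poincar\'e inequality on $V$ the latter will in fact tend to $0$ once the pointwise decay is established.

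Next I would establish \eqref{EWRTDFBSERSDFGSDFHDSFADSFSDFSVBASWERTDFSGSDFGFDGASFASDFDSFHFDGHFH08}. Testing with $Au$ gives $\frac12\ddt\Vert\grad u\Vert_{L^2}^2+\Vert Au\Vert_{L^2}^2=\langle\mathbb{P}(\rho e_2),Au\rangle-\langle u\cdot\grad u,Au\rangle$; the forcing is absorbed using $\Vert\rho\Vert_{L^2}=\Vert\rho_0\Vert_{L^2}$, and the nonlinearity via the two-dimensional Ladyzhenskaya bound $\Vert u\cdot\grad u\Vert_{L^2}\les\Vert u\Vert_{L^2}^{1/2}\Vert\grad u\Vert_{L^2}\Vert Au\Vert_{L^2}^{1/2}$, yielding $\ddt\Vert\grad u\Vert_{L^2}^2+\Vert Au\Vert_{L^2}^2\les\Vert\rho_0\Vert_{L^2}^2+\Vert u\Vert_{L^2}^2\Vert\grad u\Vert_{L^2}^4$. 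Since $\int_0^\infty\Vert\grad u\Vert_{L^2}^2<\infty$ and $\Vert u\Vert_{L^2}$ is bounded, the uniform Gronwall lemma gives $\Vert\grad u\Vert_{L^2}\le C$ for all $t$. To reach the pointwise bound on $\Vert Au\Vert_{L^2}$ I would differentiate the equation in time and test with $u_t$; the delicate term $\langle\rho_t e_2,u_t\rangle=-\langle u\cdot\grad\rho,(u_t)_2\rangle$ integrates by parts to $\langle\rho,u\cdot\grad(u_t)_2\rangle\le\Vert\rho_0\Vert_{L^4}\Vert u\Vert_{L^4}\Vert\grad u_t\Vert_{L^2}$, involving only the conserved norm $\Vert\rho_0\Vert_{L^4}$ and absorbed into $\Vert\grad u_t\Vert_{L^2}$. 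A second uniform Gronwall argument bounds $\Vert u_t\Vert_{L^2}$, and then the equation itself gives $\Vert Au\Vert_{L^2}\le\Vert\rho_0\Vert_{L^2}+\Vert u_t\Vert_{L^2}+\Vert u\cdot\grad u\Vert_{L^2}\le C$, the last term being absorbed by Young's inequality.

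The convergence statements then follow from Barbalat-type arguments. Combining $\int_0^\infty\Vert\grad u\Vert_{L^2}^2<\infty$ with the now-uniform bound on $\ddt\Vert\grad u\Vert_{L^2}^2=2\langle Au,u_t\rangle$ yields $\Vert\grad u\Vert_{L^2}\to0$, which is \eqref{EWRTDFBSERSDFGSDFHDSFADSFSDFSVBASWERTDFSGSDFGFDGASFASDFDSFHFDGHFH09}, and through Poincar\'e and Agmon also $\Vert u\Vert_{L^2}\to0$ and $\Vert u\Vert_{L^\infty}\to0$. For \eqref{EWRTDFBSERSDFGSDFHDSFADSFSDFSVBASWERTDFSGSDFGFDGASFASDFDSFHFDGHFH10} I write $Au-\mathbb{P}(\rho e_2)=-u_t-\mathbb{P}(u\cdot\grad u)$; since $\Vert u\cdot\grad u\Vert_{L^2}\le\Vert u\Vert_{L^\infty}\Vert\grad u\Vert_{L^2}\to0$, it remains to prove $\Vert u_t\Vert_{L^2}\to0$. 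Testing with $u_t$ and integrating the buoyancy term by parts in time shows $\Vert u_t\Vert_{L^2}^2+\ddt\bigl(\tfrac12\Vert\grad u\Vert_{L^2}^2-\langle\rho,u_2\rangle\bigr)$ is dominated by quantities tending to zero, while the corrected functional itself tends to zero; this gives $\int_t^{t+1}\Vert u_t\Vert_{L^2}^2\to0$, and an upper bound on $\ddt\Vert u_t\Vert_{L^2}^2$ upgrades it to the pointwise decay $\Vert u_t\Vert_{L^2}\to0$.

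Finally, for \eqref{EWRTDFBSERSDFGSDFHDSFADSFSDFSVBASWERTDFSGSDFGFDGASFASDFDSFHFDGHFH11} I would differentiate the transport equation, test with $\grad\rho$, and use $\grad\cdot u=0$ to get $\ddt\Vert\grad\rho\Vert_{L^2}^2\le 2\Vert\grad u\Vert_{L^\infty}\Vert\grad\rho\Vert_{L^2}^2$, hence $\Vert\grad\rho(t)\Vert_{L^2}\le\Vert\grad\rho_0\Vert_{L^2}\exp\bigl(\int_0^t\Vert\grad u\Vert_{L^\infty}\,ds\bigr)$. The entire improvement from $e^{Ct}$ to $e^{\epsilon t}$ reduces to showing $\int_0^t\Vert\grad u\Vert_{L^\infty}\,ds=o(t)$, and I expect this to be the main obstacle. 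The quantity $\Vert\grad u\Vert_{L^\infty}$ lies exactly at the borderline two-dimensional Sobolev threshold, so it cannot be anchored at the bounded norm $\Vert Au\Vert_{L^2}$ (a logarithmic estimate anchored there only reproduces the linear rate $e^{Ct}$). Instead one must anchor the interpolation at the \emph{decaying} norm, via $\Vert\grad u\Vert_{L^\infty}\les\Vert\grad u\Vert_{L^2}^{1/2}\Vert A^{3/2}u\Vert_{L^2}^{1/2}$, and exploit $\Vert\grad u\Vert_{L^2}\to0$: if $\Vert A^{3/2}u\Vert_{L^2}$ is controlled even weakly (say uniformly locally in $L^2$ in time, or subexponentially), then H\"older in time together with $\int_0^\infty\Vert\grad u\Vert_{L^2}^2<\infty$ forces $\int_0^t\Vert\grad u\Vert_{L^\infty}=o(t)$. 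The genuine difficulty is that $\Vert A^{3/2}u\Vert_{L^2}$ is driven by the $H^1$ norm of the forcing, hence coupled back to $\Vert\grad\rho\Vert_{L^2}$, and this coupling must be closed by a careful bootstrap that keeps the rate strictly sublinear rather than reintroducing a linear one.
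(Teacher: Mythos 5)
Your treatment of \eqref{EWRTDFBSERSDFGSDFHDSFADSFSDFSVBASWERTDFSGSDFGFDGASFASDFDSFHFDGHFH08}, \eqref{EWRTDFBSERSDFGSDFHDSFADSFSDFSVBASWERTDFSGSDFGFDGASFASDFDSFHFDGHFH09}, and \eqref{EWRTDFBSERSDFGSDFHDSFADSFSDFSVBASWERTDFSGSDFGFDGASFASDFDSFHFDGHFH10} is essentially the paper's argument in different clothing: the potential-energy correction $\mathcal E=\tfrac12\Vert u\Vert_{L^2}^2-\int_\Omega\rho x_2$ is the same device as the paper's shift $\theta=\rho-x_2$ (the conserved $\Vert\rho\Vert_{L^2}^2$ accounts for the difference between the two functionals), and the subsequent steps --- testing with $Au$, differentiating in time and testing with $u_t$ with the buoyancy term integrated by parts through the transport equation, recovering $\Vert Au\Vert_{L^2}$ from the equation itself, and passing to the limits by uniform-Gronwall/Barbalat lemmas --- all match the paper's route.

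The gap is in \eqref{EWRTDFBSERSDFGSDFHDSFADSFSDFSVBASWERTDFSGSDFGFDGASFASDFDSFHFDGHFH11}, which is the substantive new claim of the theorem, and you have correctly located the obstacle but not overcome it. The interpolation $\Vert\nabla u\Vert_{L^\infty}\les\Vert\nabla u\Vert_{L^2}^{1/2}\Vert A^{3/2}u\Vert_{L^2}^{1/2}$ anchors at $\Vert A^{3/2}u\Vert_{L^2}$, and the only available control of that norm passes through the $H^1$ norm of the forcing $\rho e_2$, i.e., through $\Vert\nabla\rho\Vert_{L^2}$ --- the very quantity you are trying to bound. A bootstrap of the schematic form $\Vert A^{3/2}u\Vert_{L^2}\les\Vert\rho\Vert_{H^1}\les\exp\bigl(\int_0^t\Vert\nabla u\Vert_{L^\infty}\bigr)$ does not close to anything sublinear without additional input, so as written the proof of \eqref{EWRTDFBSERSDFGSDFHDSFADSFSDFSVBASWERTDFSGSDFGFDGASFASDFDSFHFDGHFH11} is incomplete. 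The paper breaks this circle by never going above second derivatives of $u$: it invokes the $L^3_tW^{2,3}_x$ maximal regularity of Sohr and von Wahl for the Stokes problem (with a temporal cut-off to localize to unit time intervals), for which the forcing $-u\cdot\nabla u+\rho e_2$ need only lie in $L^3_tL^3_x$; since $\Vert\rho\Vert_{L^3}$ is conserved by the transport equation, no coupling to $\nabla\rho$ ever enters, and $\int_{t_1}^{t_1+1}\Vert D^2u\Vert_{L^3}^3\les1$ uniformly in $t_1$. The interpolation used is $\Vert\nabla u\Vert_{L^\infty}\les\Vert\nabla u\Vert_{L^2}^{1/4}\Vert D^2u\Vert_{L^3}^{3/4}+\Vert\nabla u\Vert_{L^2}$, and H\"older in time over unit intervals together with $\Vert\nabla u\Vert_{L^2}\to0$ gives $\int_{t_1}^{t_1+1}\Vert\nabla u\Vert_{L^\infty}\leq\epsilon$ for $t_1$ large, hence $\int_{t_0}^{t}\Vert\nabla u\Vert_{L^\infty}\leq\epsilon(t-t_0)$ and the $e^{\epsilon t}$ bound by Gronwall. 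To salvage your outline, replace the $A^{3/2}$ anchor by an estimate on $D^2u$ in a space of integrability strictly above $L^2$ that requires only conserved norms of $\rho$; parabolic maximal regularity is the natural source of such an estimate.
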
 \colb \par Above and in the sequel, we allow all constants to depend on~$\Omega$. We note that in Theorem~\ref{T01} the assumption of $H^2$ regularity on the initial velocity can be relaxed to $u_0\in V$, as shown in Theorem~\ref{T03} below. In the next statement, we obtain the asymptotic behavior of the $H^{3}\times H^{2}$ norm of the solution~$(u,\rho)$. From \cite{LLT,T5}, the local existence requires the initial data to satisfy the compatibility condition    \begin{align}      \begin{split}     (-\Laplace u_0  - \grad p_0- \rho_0e_2)|_{\FGSDFHGFHDFGHDFGH\Omega} = 0    ,   \end{split}    \label{EWRTDFBSERSDFGSDFHDSFADSFSDFSVBASWERTDFSGSDFGFDGASFASDFDSFHFDGHFH12}    \end{align} where $p_0$ denotes the initial pressure, which solves the Neumann boundary problem   \begin{align}   \begin{split}   &\Laplace p_0 = \grad \cdot (\rho_0e_2 - u_0 \cdot \grad u_0)    \inon{in $\FGSDFHGFHDFGHDFGH\Omega$}   \\&   \grad p_0 \cdot n\bigl{|}_{\FGSDFHGFHDFGHDFGH \Omega} = (\Laplace u_0 + \rho_0e_2) \cdot n\bigl{|}_{\FGSDFHGFHDFGHDFGH \Omega}   \end{split}   \llabel{TTq Opzezq ZB J y5o KS8 BhH sd nKkH gnZl UCm7j0 Iv Y jQE 7JN 9fd ED ddys 3y1x 52pbiG Lc a 71j G3e uli Ce uzv2 R40Q 50JZUB uK d U3m May 0uo S7 ulWD h7qG 2FKw2T JX z BES 2Jk Q4U Dy 4aJ2 IXs4 RNH41s py T GNh hk0 w5Z C8 B3nU Bp9p 8eLKh8 UO 4 fMq Y6w lcA GM xCHt vlOx MqAJoQ QU 1 e8a 2aX 9Y6 2r lIS6 dejK Y3KCUm 25 7 oCl VeE e8p 1z UJSv bmLd Fy7ObQEWRTDFBSERSDFGSDFHDSFADSFSDFSVBASWERTDFSGSDFGFDGASFASDFDSFHFDGHFH12b}   \end{align} with $n$ denoting the outward unit normal. \par \cole \begin{thm} \label{T02} Assume that $(u_0 , \rho_0) \in (H^{3}(\Omega) \cap V) \times H^2(\Omega)$ satisfies the compatibility condition \eqref{EWRTDFBSERSDFGSDFHDSFADSFSDFSVBASWERTDFSGSDFGFDGASFASDFDSFHFDGHFH12}, and let $(u,\rho)$ be the corresponding solution of \eqref{EWRTDFBSERSDFGSDFHDSFADSFSDFSVBASWERTDFSGSDFGFDGASFASDFDSFHFDGHFH01}--\eqref{EWRTDFBSERSDFGSDFHDSFADSFSDFSVBASWERTDFSGSDFGFDGASFASDFDSFHFDGHFH02}. Then for every $\epsilon>0$, we have   \begin{align}   \begin{split}   \Vert u(t) \Vert_{H^{3}} \leq C_\epsilon e^{\epsilon t}   \end{split}    \comma t\geq0    \llabel{ FN l J6F RdF kEm qM N0Fd NZJ0 8DYuq2 pL X JNz 4rO ZkZ X2 IjTD 1fVt z4BmFI Pi 0 GKD R2W PhO zH zTLP lbAE OT9XW0 gb T Lb3 XRQ qGG 8o 4TPE 6WRc uMqMXh s6 x Ofv 8st jDi u8 rtJt TKSK jlGkGw t8 n FDx jA9 fCm iu FqMW jeox 5Akw3w Sd 8 1vK 8c4 C0O dj CHIs eHUO hyqGx3 Kw O lDq l1Y 4NY 4I vI7X DE4c FeXdFV bC F HaJ sb4 OC0 hu Mj65 J4fa vgGo7q Y5 X tLy EWRTDFBSERSDFGSDFHDSFADSFSDFSVBASWERTDFSGSDFGFDGASFASDFDSFHFDGHFH13}   \end{align} and    \begin{align}   \begin{split}   \Vert \rho(t) \Vert_{H^2} \leq C_\epsilon e^{\epsilon t}    \comma t\geq0   ,   \end{split}    \label{EWRTDFBSERSDFGSDFHDSFADSFSDFSVBASWERTDFSGSDFGFDGASFASDFDSFHFDGHFH14}   \end{align}  where $C_{\epsilon}$ is a constant depending on~$\epsilon$. \end{thm}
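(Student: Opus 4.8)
The plan is to close a coupled Gronwall estimate for the top-order norms $\Vert A^{3/2}u\Vert_{L^2}^2$ (comparable to $\Vert u\Vert_{H^3}^2$ on $D(A^{3/2})$) and $\Vert\rho\Vert_{H^2}^2$, arranged so that after invoking Theorem~\ref{T01} the effective exponential rate is $o(1)$ as $t\to\infty$. The engine is the dissipation \eqref{EWRTDFBSERSDFGSDFHDSFADSFSDFSVBASWERTDFSGSDFGFDGASFASDFDSFHFDGHFH09}, namely $\Vert\grad u\Vert_{L^2}\to0$, together with the uniform bound \eqref{EWRTDFBSERSDFGSDFHDSFADSFSDFSVBASWERTDFSGSDFGFDGASFASDFDSFHFDGHFH08} and the density bound \eqref{EWRTDFBSERSDFGSDFHDSFADSFSDFSVBASWERTDFSGSDFGFDGASFASDFDSFHFDGHFH11}: the first produces a small prefactor in front of every top-order contribution, while the third supplies a forcing of size $C_\epsilon e^{\epsilon t}$.

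First, for the density I would apply two spatial derivatives to $\rho_t+u\cdot\grad\rho=0$ and run an $L^2$ estimate. Since $\grad\cdot u=0$, the leading transport term vanishes upon integration, leaving
\begin{equation}
\frac{d}{dt}\Vert\rho\Vert_{H^2}^2 \lesssim \Vert\grad u\Vert_{L^\infty}\Vert\rho\Vert_{H^2}^2 + \Vert D^2 u\Vert_{L^4}\Vert\grad\rho\Vert_{L^4}\Vert\rho\Vert_{H^2}.
\end{equation}
Using the two-dimensional Agmon inequality $\Vert\grad u\Vert_{L^\infty}\lesssim\Vert\grad u\Vert_{L^2}^{1/2}\Vert u\Vert_{H^3}^{1/2}$ and Gagliardo--Nirenberg, both terms carry either a factor $\Vert\grad u\Vert_{L^2}^{1/2}$ or a controlled factor coming from \eqref{EWRTDFBSERSDFGSDFHDSFADSFSDFSVBASWERTDFSGSDFGFDGASFASDFDSFHFDGHFH08} and \eqref{EWRTDFBSERSDFGSDFHDSFADSFSDFSVBASWERTDFSGSDFGFDGASFASDFDSFHFDGHFH11}, so each is $o(1)$ times a top-order quantity plus an admissible forcing.

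Second, for the velocity I would work from $u_t+Au=\mathbb{P}(\rho e_2)-\mathbb{P}(u\cdot\grad u)$ at the $A^{3/2}$ level,
\begin{equation}
\frac12\frac{d}{dt}\Vert A^{3/2}u\Vert_{L^2}^2 + \Vert A^2 u\Vert_{L^2}^2 = \langle\mathbb{P}(\rho e_2)-\mathbb{P}(u\cdot\grad u),A^3 u\rangle.
\end{equation}
The obstacle here is that $\mathbb{P}(\rho e_2)$ and $\mathbb{P}(u\cdot\grad u)$ do not lie in $D(A)$, since they need not vanish on $\partial\Omega$, so one cannot integrate by parts against $A$ without boundary contributions. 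I would circumvent this by elliptic Stokes regularity: writing $-\Laplace u+\grad p=\rho e_2-u\cdot\grad u-u_t$ and estimating $\Vert u\Vert_{H^3}\lesssim\Vert\rho\Vert_{H^1}+\Vert u\cdot\grad u\Vert_{H^1}+\Vert u_t\Vert_{H^1}$, where $\Vert u\cdot\grad u\Vert_{H^1}\lesssim\Vert u\Vert_{H^2}^2\le C$ by \eqref{EWRTDFBSERSDFGSDFHDSFADSFSDFSVBASWERTDFSGSDFGFDGASFASDFDSFHFDGHFH08}, $\Vert\rho\Vert_{H^1}\le C_\epsilon e^{\epsilon t}$ by \eqref{EWRTDFBSERSDFGSDFHDSFADSFSDFSVBASWERTDFSGSDFGFDGASFASDFDSFHFDGHFH11}, and $\Vert u_t\Vert_{H^1}$ comes from a companion energy estimate on the time-differentiated system. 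The latter is where the compatibility condition \eqref{EWRTDFBSERSDFGSDFHDSFADSFSDFSVBASWERTDFSGSDFGFDGASFASDFDSFHFDGHFH12} enters, and there $\rho_t=-u\cdot\grad\rho$ again contributes only $\Vert\rho\Vert_{H^1}\le C_\epsilon e^{\epsilon t}$.

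Finally, adding the two estimates and absorbing top-order factors into the dissipation $\Vert A^2 u\Vert_{L^2}^2$ and into $o(1)$ multiples of $Z:=\Vert A^{3/2}u\Vert_{L^2}^2+\Vert\rho\Vert_{H^2}^2$ via Young's inequality yields $\frac{d}{dt}Z\le\phi(t)Z+\psi(t)$, with $\phi(t)\to0$ (each piece retaining a positive power of $\Vert\grad u\Vert_{L^2}$) and $\psi(t)\le C_\epsilon e^{\epsilon t}$. Then $\int_0^t\phi=o(t)$, and Gronwall gives $Z(t)\lesssim e^{o(t)}\bigl(Z(0)+\int_0^t C_\epsilon e^{\epsilon s}\,ds\bigr)\le C_{\epsilon}'e^{2\epsilon t}$; since $\epsilon>0$ is arbitrary, this is exactly \eqref{EWRTDFBSERSDFGSDFHDSFADSFSDFSVBASWERTDFSGSDFGFDGASFASDFDSFHFDGHFH13}--\eqref{EWRTDFBSERSDFGSDFHDSFADSFSDFSVBASWERTDFSGSDFGFDGASFASDFDSFHFDGHFH14}. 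The two genuinely hard points are the boundary terms in the $H^3$ Stokes estimate and the bookkeeping that keeps $\phi$ of size $o(1)$ rather than $O(1)$, which is precisely what forces every top-order term to be paired with a decaying factor from \eqref{EWRTDFBSERSDFGSDFHDSFADSFSDFSVBASWERTDFSGSDFGFDGASFASDFDSFHFDGHFH09}.
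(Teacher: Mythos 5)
Your treatment of the velocity is essentially the paper's route: differentiate the equation in time to control $\Vert \nabla u_t\Vert_{L^2}$ (this is where the compatibility condition and the bound \eqref{EWRTDFBSERSDFGSDFHDSFADSFSDFSVBASWERTDFSGSDFGFDGASFASDFDSFHFDGHFH11} on $\rho_t=-u\cdot\nabla\rho$ enter), and then recover $\Vert u\Vert_{H^3}\les e^{\epsilon t}$ from the stationary Stokes problem with right-hand side $-u\cdot\nabla u-u_t+\rho e_2$ estimated in $H^1$. That half is sound, modulo the detail that the companion estimate needs the unit-interval bounds $\limsup_{t\to\infty}\int_t^{t+t_0}\Vert\nabla u_t\Vert_{L^2}^2=0$ from \eqref{EWRTDFBSERSDFGSDFHDSFADSFSDFSVBASWERTDFSGSDFGFDGASFASDFDSFHFDGHFH38} to run the uniform Gronwall argument.

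The genuine gap is in the density estimate. The Gronwall coefficient multiplying $\Vert\rho\Vert_{H^2}^2$ is $\Vert\nabla u\Vert_{L^\infty}$, and to conclude $\Vert\rho\Vert_{H^2}\leq C_\epsilon e^{\epsilon t}$ you need its time integral to satisfy $\int_0^t\Vert\nabla u\Vert_{L^\infty}\,ds\leq \epsilon t+C_\epsilon$. Your bound $\Vert\nabla u\Vert_{L^\infty}\les\Vert\nabla u\Vert_{L^2}^{1/2}\Vert u\Vert_{H^3}^{1/2}$ does not deliver this: if you keep $\Vert u\Vert_{H^3}^{1/2}\sim Z^{1/4}$ inside the coupled system, the term becomes $o(1)\,Z^{5/4}$, which is superlinear in $Z$ and not amenable to a linear Gronwall inequality (a differential inequality $\dot Z\le\delta Z^{5/4}$ can blow up in finite time for any fixed $\delta>0$); if instead you first establish $\Vert u\Vert_{H^3}\les e^{\epsilon t}$ and substitute, the coefficient becomes $o(1)\,e^{\epsilon t/2}$, whose time integral is of order $e^{\epsilon t/2}$ rather than $\epsilon t$, and Gronwall then yields only a bound of the type $\exp(Ce^{\epsilon t/2})$. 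The paper's proof hinges on the separately established estimate \eqref{EWRTDFBSERSDFGSDFHDSFADSFSDFSVBASWERTDFSGSDFGFDGASFASDFDSFHFDGHFH165}, namely $\int_{t_0}^{t}\Vert\nabla u\Vert_{L^\infty}\leq\epsilon(t-t_0)$, which is obtained not from Agmon's inequality against $H^3$ but from the interpolation \eqref{EWRTDFBSERSDFGSDFHDSFADSFSDFSVBASWERTDFSGSDFGFDGASFASDFDSFHFDGHFH32}, $\Vert\nabla u\Vert_{L^\infty}\les\Vert\nabla u\Vert_{L^2}^{1/4}\Vert\Delta u\Vert_{L^3}^{3/4}+\Vert\nabla u\Vert_{L^2}$, combined with the uniform-in-$t_1$ maximal regularity bound $\int_{t_1}^{t_1+1}\Vert D^2u\Vert_{L^3}^3\les1$ of \eqref{EWRTDFBSERSDFGSDFHDSFADSFSDFSVBASWERTDFSGSDFGFDGASFASDFDSFHFDGHFH48} (Sohr--von Wahl applied to a time-cut-off Stokes system) and the decay \eqref{EWRTDFBSERSDFGSDFHDSFADSFSDFSVBASWERTDFSGSDFGFDGASFASDFDSFHFDGHFH09} of $\Vert\nabla u\Vert_{L^2}$. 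Without this ingredient, or an equivalent one, your argument does not close to give \eqref{EWRTDFBSERSDFGSDFHDSFADSFSDFSVBASWERTDFSGSDFGFDGASFASDFDSFHFDGHFH14}.
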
 \colb \par Using the ideas in the proof of Theorem~\ref{T03}, the same long time behavior can be obtained with initial data $(u_0 , \rho_0) \in D(A) \times H^2(\Omega)$, now without the compatibility condition \eqref{EWRTDFBSERSDFGSDFHDSFADSFSDFSVBASWERTDFSGSDFGFDGASFASDFDSFHFDGHFH12}. \par In the next theorem, we obtain the interior bounds for the $L^p$ norm of the Hessian $D^2 u$  of the velocity in the interior, for any $p\geq2$.  \par \cole \begin{thm} \label{T04} Let $(u_0 , \rho_0) \in (H^2(\Omega)\cap V) \times H^1(\Omega)$ and $p\in [2,\infty)$, and suppose that $\Omega'\subseteq \Omega$  is open and relatively compact. Then for the corresponding solution $(u,\rho)$ of \eqref{EWRTDFBSERSDFGSDFHDSFADSFSDFSVBASWERTDFSGSDFGFDGASFASDFDSFHFDGHFH01}--\eqref{EWRTDFBSERSDFGSDFHDSFADSFSDFSVBASWERTDFSGSDFGFDGASFASDFDSFHFDGHFH02}  and all $t_0>0$ we have a space-time bound   \begin{align}    \begin{split}     \Vert D^2 u \Vert_{L^p([t_0,T] ; L^p(\Omega'))} \leq C (T^{1/p} + 1)     ,    \end{split}    \label{EWRTDFBSERSDFGSDFHDSFADSFSDFSVBASWERTDFSGSDFGFDGASFASDFDSFHFDGHFH15}   \end{align} for $T\geq t_0>0$, while in addition we have a pointwise in time bound   \begin{align}    \begin{split}     \Vert D^2 u(t) \Vert_{L^p(\Omega')} \leq C t^{(p+4)/4}    \comma t\geq t_0    ,    \end{split}    \label{EWRTDFBSERSDFGSDFHDSFADSFSDFSVBASWERTDFSGSDFGFDGASFASDFDSFHFDGHFH16}   \end{align} where the constants in \eqref{EWRTDFBSERSDFGSDFHDSFADSFSDFSVBASWERTDFSGSDFGFDGASFASDFDSFHFDGHFH15} and \eqref{EWRTDFBSERSDFGSDFHDSFADSFSDFSVBASWERTDFSGSDFGFDGASFASDFDSFHFDGHFH16} depend on $t_0$, $p$, and $\dist(\Omega',\FGSDFHGFHDFGHDFGH\Omega)$. \end{thm}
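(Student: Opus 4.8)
The plan is to pass to the vorticity $\omega=\partial_1 u_2-\partial_2 u_1$, which satisfies the scalar convection-diffusion equation
\[
\omega_t-\Laplace\omega+u\cdot\grad\omega=\partial_1\rho,
\]
and to remove the derivative loss in the buoyancy forcing $\partial_1\rho$ by the change of variable of \cite{KW1}. Concretely, I would introduce the auxiliary function $\phi$ solving the convection-diffusion problem $\phi_t-\Laplace\phi+u\cdot\grad\phi=\rho$ and set $\eta=\omega-\partial_1\phi$. A direct computation then shows that $\eta$ obeys
\[
\eta_t-\Laplace\eta+u\cdot\grad\eta=\partial_1 u\cdot\grad\phi,
\]
whose forcing no longer contains any derivative of $\rho$; it involves only $\grad u$, which by the $H^2$ bound \eqref{EWRTDFBSERSDFGSDFHDSFADSFSDFSVBASWERTDFSGSDFGFDGASFASDFDSFHFDGHFH08} of Theorem~\ref{T01} is bounded in every $L^q$, $q<\infty$, uniformly in time, together with $\grad\phi$. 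Since in the interior $D^2u$ is controlled by $\grad\omega=\grad\eta+\grad\partial_1\phi$ through interior elliptic regularity for the stream function, the estimate for $D^2u$ reduces to $L^p$ estimates for $\grad\eta$ and for $D^2\phi$.

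For each of the two parabolic problems I would invoke interior maximal $L^p$ regularity. The source $\rho$ in the $\phi$-equation is transported, so $\Vert\rho(t)\Vert_{L^p}=\Vert\rho_0\Vert_{L^p}$ is constant in time; hence, after localizing, the $\chi\rho$ contribution to $\Vert D^2\phi\Vert_{L^p([t_0,T];L^p)}$ grows only like $T^{1/p}$, which is exactly the source of the factor $T^{1/p}+1$ on the right of \eqref{EWRTDFBSERSDFGSDFHDSFADSFSDFSVBASWERTDFSGSDFGFDGASFASDFDSFHFDGHFH15}. The same maximal-regularity bound applied to the $\eta$-equation, with forcing $\partial_1 u\cdot\grad\phi$ now lying in $L^p([t_0,T];L^p)$, then yields \eqref{EWRTDFBSERSDFGSDFHDSFADSFSDFSVBASWERTDFSGSDFGFDGASFASDFDSFHFDGHFH15}. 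The lower-order inputs, namely $\phi$ and $\grad\phi$ in $L^p$, are obtained beforehand by energy estimates on the $\phi$-equation using the uniform-in-time $L^q$ bound on $\grad u$; these grow at most polynomially in $t$.

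The genuine difficulty is the localization, since the change of variable is nonlocal: $\phi$ depends on $u$ and $\rho$ throughout $\Omega$, and we have no control of $\phi$ near $\partial\Omega$. Multiplying by a cutoff $\chi$ produces commutator terms such as $-2\grad\chi\cdot\grad\phi$, $-\phi\Laplace\chi$, and $\phi\,u\cdot\grad\chi$ in the equation for $\chi\phi$ (and their analogues for $\chi\eta$), all supported in the transition region where $\grad\chi\neq0$. To close the estimates without losing derivatives or appealing to boundary information, I would use a \emph{double cut-off}: a nested pair $\chi_1\prec\chi_2$, both compactly supported in $\Omega$, with $\chi_1\equiv1$ on $\Omega'$ and $\chi_2\equiv1$ on a neighborhood of $\supp\chi_1$. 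The commutators arising from the $\chi_1$-localized equations involve only $\phi,\grad\phi$ (respectively $\eta,\grad\eta$) on $\supp\grad\chi_1\subseteq\{\chi_2=1\}$, and are therefore absorbed into the strictly lower-order $\chi_2$-localized quantities established at the previous step, which breaks the apparent circularity. This is the main obstacle; once the cutoff hierarchy is arranged so that every commutator is controlled by an already-estimated, slightly-larger-support term, maximal regularity closes on $\Omega'$.

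Finally, the pointwise-in-time bound \eqref{EWRTDFBSERSDFGSDFHDSFADSFSDFSVBASWERTDFSGSDFGFDGASFASDFDSFHFDGHFH16} would be deduced from the space-time bound by a parabolic smoothing argument on the window $[t/2,t]$: interior maximal regularity on this window controls $\sup_{s\in[3t/4,t]}\Vert D^2u(s)\Vert_{L^p(\Omega')}$ in terms of $\Vert D^2u\Vert_{L^p([t/2,t];L^p(\Omega'))}\les t^{1/p}$, the window length, and the polynomially growing lower-order norms of $\phi$; collecting the resulting powers of $t$ produces the exponent $(p+4)/4$. I expect this exponent to be non-optimal — for $p=2$ it is weaker than the uniform bound $\Vert Au\Vert_{L^2}\le C$ of Theorem~\ref{T01} — which is consistent with the purely polynomial-in-time nature of the assertion.
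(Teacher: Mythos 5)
Your change of variable is genuinely different from the paper's: you subtract $\partial_1\phi$ where $\phi$ solves the parabolic problem $\phi_t-\Laplace\phi+u\cdot\grad\phi=\rho$, whereas the paper subtracts $R(\rho\eta)$ with the \emph{time-independent} smoothing operator $R=\partial_1(I-\Laplace)^{-1}$, so that the derivative loss reappears only through the commutator $[R,u\cdot\grad](\rho\eta)$ and the order-one-smoothing operator $N$, both of which are bounded in $L^{2p}$ \emph{uniformly in time}. That uniformity is exactly what your sketch does not secure, and it is where the first gap lies: you state that $\phi$ and $\grad\phi$ ``grow at most polynomially in $t$.'' If $\grad\phi$ grows like any positive power of $t$, then the forcing $\partial_1 u\cdot\grad\phi$ of your $\eta$-equation has $L^p([t_0,T];L^p)$ norm of order $T^{\alpha+1/p}$ with $\alpha>0$, and maximal regularity then yields a bound incompatible with the rate $T^{1/p}+1$ in \eqref{EWRTDFBSERSDFGSDFHDSFADSFSDFSVBASWERTDFSGSDFGFDGASFASDFDSFHFDGHFH15}. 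The route is salvageable — with Dirichlet data for $\phi$ the spectral gap of the Laplacian and the conservation of $\Vert\rho\Vert_{L^q}$ give $\Vert\phi(t)\Vert_{L^q}\les 1$ and unit-window bounds on $\grad\phi$ uniformly in $t$, mirroring the paper's uniform bounds on its sources $f$ and $g$ — but you must prove these uniform bounds; polynomial growth of the auxiliary quantities is not good enough for the space-time estimate.

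The second and more serious gap is the pointwise bound \eqref{EWRTDFBSERSDFGSDFHDSFADSFSDFSVBASWERTDFSGSDFGFDGASFASDFDSFHFDGHFH16}. ``Parabolic smoothing on the window $[t/2,t]$'' cannot upgrade an $L^p_tW^{2,p}_x$ bound to a pointwise-in-time $W^{2,p}$ bound: the temporal trace of the maximal regularity class $L^p_tW^{2,p}_x\cap W^{1,p}_tL^p_x$ is only $B^{2-2/p}_{p,p}$, and since $\Vert D^2e^{s\Laplace}\Vert_{L^p\to L^p}\sim s^{-1}$ is not integrable, a source lying merely in $L^q_tL^p_x$ (which is all your $\partial_1 u\cdot\grad\phi$ and, worse, the undifferentiated $\rho$ in the $\phi$-equation provide) never yields $\sup_t\Vert D^2\cdot\Vert_{L^p}$. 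The paper's mechanism is different in kind: it applies $\partial_k$ to the localized equation, tests with $|\partial_k\tilde\zeta|^{2p-2}\partial_k\tilde\zeta$ to get the differential inequality \eqref{EWRTDFBSERSDFGSDFHDSFADSFSDFSVBASWERTDFSGSDFGFDGASFASDFDSFHFDGHFH114} for $\psi=\sum_k\Vert\partial_k\tilde\zeta\Vert_{L^{2p}}^{2p}$, and then — this is the key point your proposal misses — uses the space-time bound in the integrated form $\int_0^t(1+\psi)\les t$ of \eqref{EWRTDFBSERSDFGSDFHDSFADSFSDFSVBASWERTDFSGSDFGFDGASFASDFDSFHFDGHFH115} to prevent the Gronwall argument from producing exponential growth, which is what forces the polynomial rate. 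Without an argument of this type (or some substitute exploiting the transport structure of $\rho$ to gain time regularity of the forcing), your deduction of \eqref{EWRTDFBSERSDFGSDFHDSFADSFSDFSVBASWERTDFSGSDFGFDGASFASDFDSFHFDGHFH16} from \eqref{EWRTDFBSERSDFGSDFHDSFADSFSDFSVBASWERTDFSGSDFGFDGASFASDFDSFHFDGHFH15} does not go through, and the exponent $(p+4)/4$ cannot be ``collected'' from smoothing alone.
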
 \colb \par \startnewsection{Proofs for the global bounds}{sec03} First, we recall prior results on the $L^2$ norms corresponding to Theorem~\ref{T01}. Let $(u_0,\rho_0) \in (H^2(\Omega)\cap V) \times H^1(\Omega)$. Then  there exists a unique  global solution  $(u,\rho)$ such that $u \in L^{\infty}((0,\infty),H^2(\Omega)) \cap L^2_{\loc}((0,\infty),H^3(\Omega))$ and $\rho \in L^{\infty}((0,\infty),H^1(\Omega))$ of \eqref{EWRTDFBSERSDFGSDFHDSFADSFSDFSVBASWERTDFSGSDFGFDGASFASDFDSFHFDGHFH01}--\eqref{EWRTDFBSERSDFGSDFHDSFADSFSDFSVBASWERTDFSGSDFGFDGASFASDFDSFHFDGHFH02}. Furthermore, the solution $(u,\rho)$ satisfies   \begin{align}   \begin{split}   &\Vert u(t) \Vert_{L^2}    +   \Vert \rho(t) \Vert_{L^2}   \les 1    \comma t\geq0   .   \end{split}    \label{EWRTDFBSERSDFGSDFHDSFADSFSDFSVBASWERTDFSGSDFGFDGASFASDFDSFHFDGHFH17}   \end{align} Here and below, the notation $a\les b$ means $a\leq C b$, where $C$ is a constant, which  is allowed to depend on the size of the initial data in the pertinent norms. We denote by   \begin{align}   B(u,v) = \mathbb{P}(u \cdot \grad v) \hspace{10mm} u,v \in V    \llabel{izY DvH TR zd9x SRVg 0Pl6Z8 9X z fLh GlH IYB x9 OELo 5loZ x4wag4 cn F aCE KfA 0uz fw HMUV M9Qy eARFe3 Py 6 kQG GFx rPf 6T ZBQR la1a 6Aeker Xg k blz nSm mhY jc z3io WYjz h33sxR JM k Dos EAA hUO Oz aQfK Z0cn 5kqYPn W7 1 vCT 69a EC9 LD EQ5S BK4J fVFLAo Qp N dzZ HAl JaL Mn vRqH 7pBB qOr7fv oa e BSA 8TE btx y3 jwK3 v244 dlfwRL Dc g X14 vTp Wd8 zyEWRTDFBSERSDFGSDFHDSFADSFSDFSVBASWERTDFSGSDFGFDGASFASDFDSFHFDGHFH18}   \end{align} the bilinear term corresponding to the Navier-Stokes equations. This allows us to rewrite \eqref{EWRTDFBSERSDFGSDFHDSFADSFSDFSVBASWERTDFSGSDFGFDGASFASDFDSFHFDGHFH01} as   \begin{align}   \begin{split}   &u_t + Au + B(u,u)  = \mathbb{P}(\rho e_2) \\&   \rho_t + u \cdot \grad \rho = 0   .   \end{split}   \label{EWRTDFBSERSDFGSDFHDSFADSFSDFSVBASWERTDFSGSDFGFDGASFASDFDSFHFDGHFH19}    \end{align} \colb \par We now turn to the proof of the first theorem. \par \begin{proof}[Proof of Theorem~\ref{T01}] We begin by proving that $\Vert u\Vert_{L^2}$ dissipates. 
Inspired by \cite{DWZZ}, we shift the  density by $x_2$, i.e., introduce    \begin{equation}    \theta(x_1,x_2,t)= \rho(x_1,x_2,t) - x_2    ,    \label{EWRTDFBSERSDFGSDFHDSFADSFSDFSVBASWERTDFSGSDFGFDGASFASDFDSFHFDGHFH123}   \end{equation} and compensate with $P=p(x_1,x_2,t)-x_2^2/2$ to derive an equivalent system of equations   \begin{align}   \begin{split}   & u_t - \Laplace u + u \cdot \grad u + \grad P = \theta e_2    \\&   \theta_t + u \cdot \grad \theta = -u \cdot e_2    \\&   \grad \cdot u = 0   ,   \end{split}   \label{EWRTDFBSERSDFGSDFHDSFADSFSDFSVBASWERTDFSGSDFGFDGASFASDFDSFHFDGHFH20}   \end{align} with $u\bigl|_{\FGSDFHGFHDFGHDFGH \Omega} = 0$.  Multiplying the first equation of \eqref{EWRTDFBSERSDFGSDFHDSFADSFSDFSVBASWERTDFSGSDFGFDGASFASDFDSFHFDGHFH20} with $u$ and the second by $\theta$, integrating, and applying the Dirichlet boundary conditions and incompressibility, we obtain   \begin{align}   \begin{split}   \frac{1}{2}\frac{d}{dt}(\Vert u \Vert_{L^2}^2 + \Vert \theta \Vert_{L^2}^2) + \Vert \grad u \Vert_{L^2}^2 = 0   .   \end{split}   \label{EWRTDFBSERSDFGSDFHDSFADSFSDFSVBASWERTDFSGSDFGFDGASFASDFDSFHFDGHFH21}   \end{align} Observe that the norm $\Vert \theta\Vert_{L^2}$ may increase, thus no direct conclusion on decay rates can be reached from~\eqref{EWRTDFBSERSDFGSDFHDSFADSFSDFSVBASWERTDFSGSDFGFDGASFASDFDSFHFDGHFH21}. The identity \eqref{EWRTDFBSERSDFGSDFHDSFADSFSDFSVBASWERTDFSGSDFGFDGASFASDFDSFHFDGHFH21} implies $\Vert u \Vert_{L^2}^2$ and $\Vert \theta \Vert_{L^2}^2$ are uniformly bounded in time  and    \begin{equation}    \int_{0}^{\infty}\Vert \grad u \Vert_{L^2}^2 \les 1    ,    \llabel{ YWjw eQmF yD5y5l DN l ZbA Jac cld kx Yn3V QYIV v6fwmH z1 9 w3y D4Y ezR M9 BduE L7D9 2wTHHc Do g ZxZ WRW Jxi pv fz48 ZVB7 FZtgK0 Y1 w oCo hLA i70 NO Ta06 u2sY GlmspV l2 x y0X B37 x43 k5 kaoZ deyE sDglRF Xi 9 6b6 w9B dId Ko gSUM NLLb CRzeQL UZ m i9O 2qv VzD hz v1r6 spSl jwNhG6 s6 i SdX hob hbp 2u sEdl 95LP AtrBBi bP C wSh pFC CUa yz xYS5 78roEWRTDFBSERSDFGSDFHDSFADSFSDFSVBASWERTDFSGSDFGFDGASFASDFDSFHFDGHFH22}   \end{equation} where we allow all constants to depend on $\Vert u_0\Vert_{H^2}$ and $\Vert \rho_0\Vert_{H^{1}}$. Utilizing the Poincar\'e inequality, we also get   \begin{equation}    \int_{0}^{\infty}\Vert  u \Vert_{L^2}^2     \les 1    .    \label{EWRTDFBSERSDFGSDFHDSFADSFSDFSVBASWERTDFSGSDFGFDGASFASDFDSFHFDGHFH23}   \end{equation} 
To prove the uniform continuity from above of the $L^2$ norm of $u$,  we multiply the first equation in \eqref{EWRTDFBSERSDFGSDFHDSFADSFSDFSVBASWERTDFSGSDFGFDGASFASDFDSFHFDGHFH20} with $u$ and integrate by parts to find that   \begin{align}   \begin{split}   \frac{1}{2}\frac{d}{dt} \Vert u \Vert_{L^2}^2 + \Vert \grad u \Vert_{L^2}^2    = \int_{\Omega} \theta u \cdot e_2   \leq \Vert u \Vert_{L^2} \Vert \theta \Vert_{L^2}   \les \Vert u \Vert_{L^2}   ,   \end{split}   \llabel{ f3UwDP sC I pES HB1 qFP SW 5tt0 I7oz jXun6c z4 c QLB J4M NmI 6F 08S2 Il8C 0JQYiU lI 1 YkK oiu bVt fG uOeg Sllv b4HGn3 bS Z LlX efa eN6 v1 B6m3 Ek3J SXUIjX 8P d NKI UFN JvP Ha Vr4T eARP dXEV7B xM 0 A7w 7je p8M 4Q ahOi hEVo Pxbi1V uG e tOt HbP tsO 5r 363R ez9n A5EJ55 pc L lQQ Hg6 X1J EW K8Cf 9kZm 14A5li rN 7 kKZ rY0 K10 It eJd3 kMGw opVnfY EGEWRTDFBSERSDFGSDFHDSFADSFSDFSVBASWERTDFSGSDFGFDGASFASDFDSFHFDGHFH24}   \end{align} which, by Poincar\'e and Young's inequalities, implies   \begin{align}   \begin{split}   \frac{d}{dt} \Vert u \Vert_{L^2}^2 + \Vert \grad u \Vert_{L^2}^2    \les 1   .   \end{split}   \label{EWRTDFBSERSDFGSDFHDSFADSFSDFSVBASWERTDFSGSDFGFDGASFASDFDSFHFDGHFH139}   \end{align} It is elementary to show that if a differentiable function $f\colon [0,\infty)\to[0,\infty)$ satisfies $\int_{0}^{\infty} f(s)\,ds<\infty$ and $f'(t)\les 1$, then $\lim_{t\to\infty}f(t)=0$. Applying the statement with $f(t)=\Vert u\Vert_{L^2}^2$, the inequalities \eqref{EWRTDFBSERSDFGSDFHDSFADSFSDFSVBASWERTDFSGSDFGFDGASFASDFDSFHFDGHFH23} and \eqref{EWRTDFBSERSDFGSDFHDSFADSFSDFSVBASWERTDFSGSDFGFDGASFASDFDSFHFDGHFH139} imply   \begin{align}   \Vert u \Vert_{L^2} \to 0   \as{$t\to\infty$}   .   \label{EWRTDFBSERSDFGSDFHDSFADSFSDFSVBASWERTDFSGSDFGFDGASFASDFDSFHFDGHFH26}   \end{align} Next, we  aim to prove that  $\Vert \grad u \Vert_{L^2}^2 \to 0$. We take the $L^2$ inner product of  \eqref{EWRTDFBSERSDFGSDFHDSFADSFSDFSVBASWERTDFSGSDFGFDGASFASDFDSFHFDGHFH19}$_1$  with $Au$ to find that   \begin{align}   \begin{split}   &   \frac{1}{2}\frac{d}{dt} \Vert A^{1/2}u\Vert_{L^2}^2 + \Vert Au \Vert_{L^2}^2    = -\langle  B(u,u),Au\rangle_{L^2} + \langle \mathbb{P}(\theta e_2), Au\rangle_{L^2}   \\&\indeq   \leq    \Vert B(u,u)\Vert_{L^2}\Vert Au \Vert_{L^2} + \Vert \theta \Vert_{L^2}\Vert Au\Vert_{L^2}   \les   \Vert u\Vert_{L^2}^{1/2}   \Vert A^{1/2}u\Vert_{L^2}   \Vert Au\Vert_{L^2}^{3/2}   + \Vert Au\Vert_{L^2}   ,   \end{split}   \label{EWRTDFBSERSDFGSDFHDSFADSFSDFSVBASWERTDFSGSDFGFDGASFASDFDSFHFDGHFH28}   \end{align} where we used   \begin{equation}    \Vert B(u,u)\Vert_{L^2}    \les    \Vert u\Vert_{L^4} \Vert \nabla u\Vert_{L^4}    \les    \Vert u\Vert_{L^2}^{1/2}    \Vert u\Vert_{H^{1}}    \Vert u\Vert_{H^2}^{1/2}    \les    \Vert u\Vert_{L^2}^{1/2}    \Vert A^{1/2} u\Vert_{L^2}    \Vert A u\Vert_{L^2}^{1/2}    .    \label{EWRTDFBSERSDFGSDFHDSFADSFSDFSVBASWERTDFSGSDFGFDGASFASDFDSFHFDGHFH131}   \end{equation} In \eqref{EWRTDFBSERSDFGSDFHDSFADSFSDFSVBASWERTDFSGSDFGFDGASFASDFDSFHFDGHFH28}, we apply Young's inequality and absorb the factors $\Vert A u\Vert_{L^2}$ into the second term on the left side, obtaining   \begin{align}   \begin{split}   &   \frac{d}{dt} \Vert A^{1/2}u\Vert_{L^2}^2 + \Vert Au \Vert_{L^2}^2    \les   \Vert u\Vert_{L^2}^{2}   \Vert A^{1/2}u\Vert_{L^2}^{4}   + 1   \les   \Vert A^{1/2}u\Vert_{L^2}^{4}   + 1   .   \end{split}   \llabel{ 2 orG fj0 TTA Xt ecJK eTM0 x1N9f0 lR p QkP M37 3r0 iA 6EFs 1F6f 4mjOB5 zu 5 GGT Ncl Bmk b5 jOOK 4yny My04oz 6m 6 Akz NnP JXh Bn PHRu N5Ly qSguz5 Nn W 2lU Yx3 fX4 hu LieH L30w g93Xwc gj 1 I9d O9b EPC R0 vc6A 005Q VFy1ly K7 o VRV pbJ zZn xY dcld XgQa DXY3gz x3 6 8OR JFK 9Uh XT e3xY bVHG oYqdHg Vy f 5kK Qzm mK4 9x xiAp jVkw gzJOdE 4v g hAv 9bVEWRTDFBSERSDFGSDFHDSFADSFSDFSVBASWERTDFSGSDFGFDGASFASDFDSFHFDGHFH29}   \end{align} Utilizing  Lemma~\ref{L01} in the Appendix, we obtain   \begin{equation}    \Vert A^{1/2}u(t)\Vert_{L^2}    \les 1    \comma t\geq0          \label{EWRTDFBSERSDFGSDFHDSFADSFSDFSVBASWERTDFSGSDFGFDGASFASDFDSFHFDGHFH122}   \end{equation} and    \begin{equation}    \Vert A^{1/2}u(t)\Vert_{L^2}    \to 0    \as{$t\to\infty$}    ,    \llabel{ IHe wc Vqcb SUcF 1pHzol Nj T l1B urc Sam IP zkUS 8wwS a7wVWR 4D L VGf 1RF r59 9H tyGq hDT0 TDlooa mg j 9am png aWe nG XU2T zXLh IYOW5v 2d A rCG sLk s53 pW AuAy DQlF 6spKyd HT 9 Z1X n2s U1g 0D Llao YuLP PB6YKo D1 M 0fi qHU l4A Ia joiV Q6af VT6wvY Md 0 pCY BZp 7RX Hd xTb0 sjJ0 Beqpkc 8b N OgZ 0Tr 0wq h1 C2Hn YQXM 8nJ0Pf uG J Be2 vuq Duk LV AJEWRTDFBSERSDFGSDFHDSFADSFSDFSVBASWERTDFSGSDFGFDGASFASDFDSFHFDGHFH138}   \end{equation} giving \eqref{EWRTDFBSERSDFGSDFHDSFADSFSDFSVBASWERTDFSGSDFGFDGASFASDFDSFHFDGHFH09}. In addition, by the same lemma,   \begin{equation}    \limsup_{t\to\infty}     \int_{t}^{t+t_0} \Vert A u\Vert_{L^2}^2    \les t_0    \comma t_0\geq 0    .    \label{EWRTDFBSERSDFGSDFHDSFADSFSDFSVBASWERTDFSGSDFGFDGASFASDFDSFHFDGHFH30}   \end{equation} We note in passing, and since it is needed in the proof of Theorem~\ref{T04}, that the inequality of type \eqref{EWRTDFBSERSDFGSDFHDSFADSFSDFSVBASWERTDFSGSDFGFDGASFASDFDSFHFDGHFH30} also holds with $A u$ replaced with~$u_t$. To show that $u_t$ dissipates in the $L^2$ norm, we take the time derivative of \eqref{EWRTDFBSERSDFGSDFHDSFADSFSDFSVBASWERTDFSGSDFGFDGASFASDFDSFHFDGHFH20}$_1$,  multiply by $u_t$, and integrate by parts, to get the equation   \begin{align}   \begin{split}   &\frac{1}{2} \frac{d}{dt} \Vert u_t \Vert_{L^2}^2      + \Vert \grad u_t \Vert_{L^2}^2      = \langle \theta_t e_2, u_t\rangle_{L^2}        - \langle u_t \cdot \grad u, u_t\rangle_{L^2}   .   \end{split}   \label{EWRTDFBSERSDFGSDFHDSFADSFSDFSVBASWERTDFSGSDFGFDGASFASDFDSFHFDGHFH33}   \end{align} For the first term on the right, we apply \eqref{EWRTDFBSERSDFGSDFHDSFADSFSDFSVBASWERTDFSGSDFGFDGASFASDFDSFHFDGHFH20}$_2$ to obtain   \begin{align}   \begin{split}    \langle \theta_t e_2, u_t\rangle_{L^2}       &= -\int_{\Omega} (u \cdot \grad \theta)(\FGSDFHGFHDFGHDFGH_{t}u_2)          - \int_{\Omega} u_2 \FGSDFHGFHDFGHDFGH_{t} u_2      = \int_{\Omega} \theta u \cdot \grad \FGSDFHGFHDFGHDFGH_{t}u_2        - \int_{\Omega} u_2 \FGSDFHGFHDFGHDFGH_{t} u_2    \\&    \les    \Vert \theta \Vert_{L^4}     \Vert u\Vert_{L^{2}}^{1/2}    \Vert A^{1/2}u\Vert_{L^{2}}^{1/2}     \Vert \grad u_t \Vert_{L^2}       + \Vert u \Vert_{L^2} \Vert u_t \Vert_{L^2}    \\&    \les    \Vert A^{1/2}u\Vert_{L^{2}}^{1/2}     \Vert \grad u_t \Vert_{L^2}       +      \Vert u\Vert_{L^{2}}     \Vert \nabla u_t \Vert_{L^2}    ,    \end{split}    \label{EWRTDFBSERSDFGSDFHDSFADSFSDFSVBASWERTDFSGSDFGFDGASFASDFDSFHFDGHFH34}   \end{align} where we used $\Vert \theta\Vert_{L^4}\les 1$  and $\Vert u_t\Vert_{L^2}\les \Vert \nabla u_t\Vert_{L^2}$ in the last inequality. For the second term on the right-hand side  of \eqref{EWRTDFBSERSDFGSDFHDSFADSFSDFSVBASWERTDFSGSDFGFDGASFASDFDSFHFDGHFH33}, we write   \begin{equation}    - \langle u_t \cdot \grad u, u_t\rangle_{L^2}    \les    \Vert u_t\Vert_{L^4}^2    \Vert \nabla u\Vert_{L^2}    \les    \Vert u_t\Vert_{L^2}    \Vert \nabla u_t\Vert_{L^2}     \Vert A^{1/2}u\Vert_{L^2}    .    \label{EWRTDFBSERSDFGSDFHDSFADSFSDFSVBASWERTDFSGSDFGFDGASFASDFDSFHFDGHFH35}   \end{equation} Using \eqref{EWRTDFBSERSDFGSDFHDSFADSFSDFSVBASWERTDFSGSDFGFDGASFASDFDSFHFDGHFH34} and \eqref{EWRTDFBSERSDFGSDFHDSFADSFSDFSVBASWERTDFSGSDFGFDGASFASDFDSFHFDGHFH35} in \eqref{EWRTDFBSERSDFGSDFHDSFADSFSDFSVBASWERTDFSGSDFGFDGASFASDFDSFHFDGHFH33} and then absorbing the factors $\Vert \nabla u_t\Vert_{L^2}$ by Young's inequality, we get   \begin{align}   \begin{split}   & \frac{d}{dt} \Vert u_t \Vert_{L^2}^2      + \Vert \grad u_t \Vert_{L^2}^2      \les     \Vert A^{1/2}u\Vert_{L^{2}}     + \Vert u \Vert_{L^2}^2     + \Vert u_t\Vert_{L^2}^2       \Vert A^{1/2}u\Vert_{L^2}^2    \les    \phi(t) (1+\Vert u_t\Vert_{L^2}^2)   ,   \end{split}    \llabel{wv 2tYc JOM1uK h7 p cgo iiK t0b 3e URec DVM7 ivRMh1 T6 p AWl upj kEj UL R3xN VAu5 kEbnrV HE 1 OrJ 2bx dUP yD vyVi x6sC BpGDSx jB C n9P Fiu xkF vw 0QPo fRjy 2OFItV eD B tDz lc9 xVy A0 de9Y 5h8c 7dYCFk Fl v WPD SuN VI6 MZ 72u9 MBtK 9BGLNs Yp l X2y b5U HgH AD bW8X Rzkv UJZShW QH G oKX yVA rsH TQ 1Vbd dK2M IxmTf6 wE T 9cX Fbu uVx Cb SBBp 0v2J MQEWRTDFBSERSDFGSDFHDSFADSFSDFSVBASWERTDFSGSDFGFDGASFASDFDSFHFDGHFH36}   \end{align} where $\phi\colon[0,\infty)\to[0,\infty)$ is a bounded function, which satisfies $\lim_{t\to\infty}\phi(t)=0$. By Lemma~\ref{L02}, we get   \begin{equation}    \Vert u_t\Vert_{L^2}\les 1    \comma t\in[0,\infty)       \label{EWRTDFBSERSDFGSDFHDSFADSFSDFSVBASWERTDFSGSDFGFDGASFASDFDSFHFDGHFH163}   \end{equation} and   \begin{align}    \Vert u_t(t) \Vert_{L^2} \to 0    \as{$t\to \infty$}    \label{EWRTDFBSERSDFGSDFHDSFADSFSDFSVBASWERTDFSGSDFGFDGASFASDFDSFHFDGHFH37}   \end{align} as well as   \begin{equation}    \limsup_{t\to\infty}     \int_{t}^{t+t_0} \Vert \nabla u_t\Vert_{L^2}^2    = 0    \comma t_0\geq 0    .    \label{EWRTDFBSERSDFGSDFHDSFADSFSDFSVBASWERTDFSGSDFGFDGASFASDFDSFHFDGHFH38}   \end{equation} \par Next, from \eqref{EWRTDFBSERSDFGSDFHDSFADSFSDFSVBASWERTDFSGSDFGFDGASFASDFDSFHFDGHFH19}$_1$, we obtain   \begin{align}    \begin{split}    \Vert Au\Vert_{L^2}    &\les    \Vert u_t\Vert_{L^2}    + \Vert B(u,u)\Vert_{L^2}    + \Vert \rho\Vert_{L^2}    \les    \Vert u_t\Vert_{L^2}    + \Vert u\Vert_{L^2}^{1/2}      \Vert A^{1/2}u\Vert_{L^2}      \Vert A u\Vert_{L^2}^{1/2}    +  1          .    \end{split}    \llabel{5Z8z 3p M EGp TU6 KCc YN 2BlW dp2t mliPDH JQ W jIR Rgq i5l AP gikl c8ru HnvYFM AI r Ih7 Ths 9tE hA AYgS swZZ fws19P 5w e JvM imb sFH Th CnSZ HORm yt98w3 U3 z ant zAy Twq 0C jgDI Etkb h98V4u o5 2 jjA Zz1 kLo C8 oHGv Z5Ru Gwv3kK 4W B 50T oMt q7Q WG 9mtb SIlc 87ruZf Kw Z Ph3 1ZA Osq 8l jVQJ LTXC gyQn0v KE S iSq Bpa wtH xc IJe4 SiE1 izzxim ke P EWRTDFBSERSDFGSDFHDSFADSFSDFSVBASWERTDFSGSDFGFDGASFASDFDSFHFDGHFH39}   \end{align} Absorbing the factor $\Vert Au\Vert_{L^2}^{1/2}$ in the left-hand side by using Young's inequality, we get   \begin{equation}    \Vert Au\Vert_{L^2}    \les     \Vert u_t\Vert_{L^2}    + \Vert u\Vert_{L^2}      \Vert A^{1/2}u\Vert_{L^2}^2    +  1          ,    \llabel{Y3s 7SX 5DA SG XHqC r38V YP3Hxv OI R ZtM fqN oLF oU 7vNd txzw UkX32t 94 n Fdq qTR QOv Yq Ebig jrSZ kTN7Xw tP F gNs O7M 1mb DA btVB 3LGC pgE9hV FK Y LcS GmF 863 7a ZDiz 4CuJ bLnpE7 yl 8 5jg Many Thanks, POL OG EPOe Mru1 v25XLJ Fz h wgE lnu Ymq rX 1YKV Kvgm MK7gI4 6h 5 kZB OoJ tfC 5g VvA1 kNJr 2o7om1 XN p Uwt CWX fFT SW DjsI wuxO JxLU1S xA 5 OEWRTDFBSERSDFGSDFHDSFADSFSDFSVBASWERTDFSGSDFGFDGASFASDFDSFHFDGHFH40}   \end{equation} from where, by \eqref{EWRTDFBSERSDFGSDFHDSFADSFSDFSVBASWERTDFSGSDFGFDGASFASDFDSFHFDGHFH09} and  \eqref{EWRTDFBSERSDFGSDFHDSFADSFSDFSVBASWERTDFSGSDFGFDGASFASDFDSFHFDGHFH37}, we get \eqref{EWRTDFBSERSDFGSDFHDSFADSFSDFSVBASWERTDFSGSDFGFDGASFASDFDSFHFDGHFH08}. Note, in passing, that \eqref{EWRTDFBSERSDFGSDFHDSFADSFSDFSVBASWERTDFSGSDFGFDGASFASDFDSFHFDGHFH08} and \eqref{EWRTDFBSERSDFGSDFHDSFADSFSDFSVBASWERTDFSGSDFGFDGASFASDFDSFHFDGHFH26} imply   \begin{equation}       \Vert u(t) \Vert_{L^\infty} \to 0    \as{$t\to \infty$}     ,    \label{EWRTDFBSERSDFGSDFHDSFADSFSDFSVBASWERTDFSGSDFGFDGASFASDFDSFHFDGHFH41}   \end{equation} by Agmon's inequality. From \eqref{EWRTDFBSERSDFGSDFHDSFADSFSDFSVBASWERTDFSGSDFGFDGASFASDFDSFHFDGHFH19}$_1$, we get   \begin{align}    \begin{split}    \Vert Au - \mathbb{P}(\rho e_2)\Vert_{L^2}    & \les    \Vert u_t\Vert_{L^2}    + \Vert B(u,u)\Vert_{L^2}    \les    \Vert u_t\Vert_{L^2}    +    \Vert u\Vert_{L^2}^{1/2}    \Vert A^{1/2}u\Vert_{L^2}    \Vert A u\Vert_{L^2}^{1/2}    .    \end{split}    \label{EWRTDFBSERSDFGSDFHDSFADSFSDFSVBASWERTDFSGSDFGFDGASFASDFDSFHFDGHFH42}   \end{align} By \eqref{EWRTDFBSERSDFGSDFHDSFADSFSDFSVBASWERTDFSGSDFGFDGASFASDFDSFHFDGHFH08}, \eqref{EWRTDFBSERSDFGSDFHDSFADSFSDFSVBASWERTDFSGSDFGFDGASFASDFDSFHFDGHFH09}, \eqref{EWRTDFBSERSDFGSDFHDSFADSFSDFSVBASWERTDFSGSDFGFDGASFASDFDSFHFDGHFH26}, and \eqref{EWRTDFBSERSDFGSDFHDSFADSFSDFSVBASWERTDFSGSDFGFDGASFASDFDSFHFDGHFH37}, the right-hand side of \eqref{EWRTDFBSERSDFGSDFHDSFADSFSDFSVBASWERTDFSGSDFGFDGASFASDFDSFHFDGHFH42} converges to $0$ as $t\to\infty$, and we obtain \eqref{EWRTDFBSERSDFGSDFHDSFADSFSDFSVBASWERTDFSGSDFGFDGASFASDFDSFHFDGHFH10}. \par We lastly proceed to prove the $o(1)$-type exponential estimate on the growth of $\Vert \grad \theta \Vert_{L^2}$. For this, we first need to prove the local in time boundedness of $\Vert \theta\Vert_{H^{1}}$, which in turn requires us to first  bound $\int_{0}^{T}\Vert \nabla u\Vert_{L^\infty}$ for some $T>0$. As above, we have   \begin{equation}    \int_{0}^{T}      \Vert \nabla u_t\Vert_{L^2}^2     \les 1    ,    \llabel{bG 3IO UdL qJ cCAr gzKM 08DvX2 mu i 13T t71 Iwq oF UI0E Ef5S V2vxcy SY I QGr qrB HID TJ v1OB 1CzD IDdW4E 4j J mv6 Ktx oBO s9 ADWB q218 BJJzRy UQ i 2Gp weE T8L aO 4ho9 5g4v WQmoiq jS w MA9 Cvn Gqx l1 LrYu MjGb oUpuvY Q2 C dBl AB9 7ew jc 5RJE SFGs ORedoM 0b B k25 VEK B8V A9 ytAE Oyof G8QIj2 7a I 3jy Rmz yET Kx pgUq 4Bvb cD1b1g KB y oE3 azg elVEWRTDFBSERSDFGSDFHDSFADSFSDFSVBASWERTDFSGSDFGFDGASFASDFDSFHFDGHFH155}   \end{equation} for all $T>0$, where the constant depends on $T$. Now, consider the Stokes problem   \begin{align}    \begin{split}    &     u_t -\Delta u + \nabla p = - u\cdot\nabla u + \rho e_2     \\&     \nabla \cdot u=0     \\&     u|_{\FGSDFHGFHDFGHDFGH\Omega} = 0     .    \end{split}    \llabel{ Nu 8iZ1 w1tq twKx8C LN 2 8yn jdo jUW vN H9qy HaXZ GhjUgm uL I 87i Y7Q 9MQ Wa iFFS Gzt8 4mSQq2 5O N ltT gbl 8YD QS AzXq pJEK 7bGL1U Jn 0 f59 vPr wdt d6 sDLj Loo1 8tQXf5 5u p mTa dJD sEL pH 2vqY uTAm YzDg95 1P K FP6 pEi zIJ Qd 8Ngn HTND 6z6ExR XV 0 ouU jWT kAK AB eAC9 Rfja c43Ajk Xn H dgS y3v 5cB et s3VX qfpP BqiGf9 0a w g4d W9U kvR iJ y46G bEWRTDFBSERSDFGSDFHDSFADSFSDFSVBASWERTDFSGSDFGFDGASFASDFDSFHFDGHFH25}   \end{align} By \cite[Theorem~2.7]{SvW}  (see also \cite{GS})  applied with $s=p=3$, we obtain that for any $\tepsilon>0$   \begin{align}    \begin{split}    \int_{0}^{T}      \Vert u\Vert_{W^{2,3}}^{3}    \les    \Vert A_3^{2/3+\tepsilon}u_0\Vert_{L^3}^{3}    + \int_{0}^{T}    \Vert  u\cdot\nabla u - \rho e_2 \Vert_{L^3}^{3}    ,    \end{split}    \label{EWRTDFBSERSDFGSDFHDSFADSFSDFSVBASWERTDFSGSDFGFDGASFASDFDSFHFDGHFH167}   \end{align} for all $T>0$, where the constant depends on $T$ and $\tepsilon$. In \eqref{EWRTDFBSERSDFGSDFHDSFADSFSDFSVBASWERTDFSGSDFGFDGASFASDFDSFHFDGHFH167}, $A_3$ denotes the $L^3$ version of the Stokes operator (cf.~\cite{SvW}). For the first term on the right-hand side in \eqref{EWRTDFBSERSDFGSDFHDSFADSFSDFSVBASWERTDFSGSDFGFDGASFASDFDSFHFDGHFH167}, we use   \begin{equation}    \Vert A_3^{2/3+\tepsilon}u_0\Vert_{L^3}    \les \Vert A u_0\Vert_{L^2}    \les 1    \label{EWRTDFBSERSDFGSDFHDSFADSFSDFSVBASWERTDFSGSDFGFDGASFASDFDSFHFDGHFH169}   \end{equation} with $\tepsilon=1/6$  from the embedding property on \cite[p.~430]{SvW}, while for the second term we estimate   \begin{align}    \begin{split}    &\Vert  u\cdot\nabla u - \rho e_2 \Vert_{L^3}^{3}    \les    \Vert u\Vert_{L^6}^{3}    \Vert \nabla u\Vert_{L^6}^{3}    + \Vert \rho\Vert_{L^3}^{3}    \les    \Vert u\Vert_{L^2}    \Vert A^{1/2}u\Vert_{L^2}^{3}    \Vert Au\Vert_{L^2}^{2}    +    1    \les 1    .    \end{split}    \label{EWRTDFBSERSDFGSDFHDSFADSFSDFSVBASWERTDFSGSDFGFDGASFASDFDSFHFDGHFH168}   \end{align} Applying \eqref{EWRTDFBSERSDFGSDFHDSFADSFSDFSVBASWERTDFSGSDFGFDGASFASDFDSFHFDGHFH169} and \eqref{EWRTDFBSERSDFGSDFHDSFADSFSDFSVBASWERTDFSGSDFGFDGASFASDFDSFHFDGHFH168} in \eqref{EWRTDFBSERSDFGSDFHDSFADSFSDFSVBASWERTDFSGSDFGFDGASFASDFDSFHFDGHFH167}, we get   \begin{equation}    \int_{0}^{T}     \Vert D^2 u\Vert_{L^3}^{3}     \les 1     ,    \label{EWRTDFBSERSDFGSDFHDSFADSFSDFSVBASWERTDFSGSDFGFDGASFASDFDSFHFDGHFH170}   \end{equation} where the constant depends on~$T$ and consequently   \begin{align}    \begin{split}    \int_{0}^{T}    \Vert \nabla u\Vert_{L^\infty}    \les 1    \end{split}    \label{EWRTDFBSERSDFGSDFHDSFADSFSDFSVBASWERTDFSGSDFGFDGASFASDFDSFHFDGHFH159}   \end{align} for all $T>0$, where the constant depends on $T$, due to the Gagliardo-Nirenberg type inequality   \begin{equation}    \Vert v\Vert_{L^\infty}    \les    \Vert v\Vert_{L^2}^{1/4}    \Vert \nabla v\Vert_{L^3}^{3/4}    + \Vert v\Vert_{L^2}    .    \label{EWRTDFBSERSDFGSDFHDSFADSFSDFSVBASWERTDFSGSDFGFDGASFASDFDSFHFDGHFH32}   \end{equation} By applying the gradient to \eqref{EWRTDFBSERSDFGSDFHDSFADSFSDFSVBASWERTDFSGSDFGFDGASFASDFDSFHFDGHFH20}$_2$ and taking the inner product with $\grad \theta$, we find that   \begin{align}   \begin{split}   \frac{1}{2}\frac{d}{dt} \Vert \grad \theta \Vert_{L^2}^2   = - \langle \grad(u \cdot \grad \theta), \grad \theta\rangle_{L^2} - \langle  \grad (u \cdot e_2), \grad \theta\rangle_{L^2}   .   \end{split}   \label{EWRTDFBSERSDFGSDFHDSFADSFSDFSVBASWERTDFSGSDFGFDGASFASDFDSFHFDGHFH44}   \end{align} The second term is estimated  by $C\Vert \grad u \Vert_{L^2} \Vert \grad \theta \Vert_{L^2}$,  using the Cauchy-Schwarz inequality.  The first term is likewise bounded as   \begin{align}   \begin{split}   -   \langle \grad(u \cdot \grad \theta), \grad \theta\rangle_{L^2}    &    -= \int_{\Omega} \FGSDFHGFHDFGHDFGH_j (u_i \FGSDFHGFHDFGHDFGH_i \theta) \FGSDFHGFHDFGHDFGH_j \theta   =     -\int_{\Omega} \FGSDFHGFHDFGHDFGH_j u_i \FGSDFHGFHDFGHDFGH_i \theta \FGSDFHGFHDFGHDFGH_j \theta      -\frac{1}{2}   \int_{\Omega} u_i \FGSDFHGFHDFGHDFGH_i |\nabla \theta|^2   \les      \Vert \grad u \Vert_{L^\infty} \Vert \grad \theta \Vert_{L^2}^2   ,   \end{split}    \llabel{H3U cJ86hW Va C Mje dsU cqD SZ 1DlP 2mfB hzu5dv u1 i 6eW 2YN LhM 3f WOdz KS6Q ov14wx YY d 8sa S38 hIl cP tS4l 9B7h FC3JXJ Gp s tll 7a7 WNr VM wunm nmDc 5duVpZ xT C l8F I01 jhn 5B l4Jz aEV7 CKMThL ji 1 gyZ uXc Iv4 03 3NqZ LITG Ux3ClP CB K O3v RUi mJq l5 blI9 GrWy irWHof lH 7 3ZT eZX kop eq 8XL1 RQ3a Uj6Ess nj 2 0MA 3As rSV ft 3F9w zB1q DQVOnHEWRTDFBSERSDFGSDFHDSFADSFSDFSVBASWERTDFSGSDFGFDGASFASDFDSFHFDGHFH45}   \end{align} by \eqref{EWRTDFBSERSDFGSDFHDSFADSFSDFSVBASWERTDFSGSDFGFDGASFASDFDSFHFDGHFH20}$_3$ and  $u\bigl|_{\FGSDFHGFHDFGHDFGH \Omega} = 0$. Thus, estimating the two terms in \eqref{EWRTDFBSERSDFGSDFHDSFADSFSDFSVBASWERTDFSGSDFGFDGASFASDFDSFHFDGHFH44} as indicated, 
we conclude that    \begin{align}   \begin{split}   \frac{d}{dt} \Vert \grad \theta \Vert_{L^2}    \les    \Vert \grad u \Vert_{L^\infty} \Vert \grad \theta \Vert_{L^2}   + \Vert \nabla u\Vert_{L^2}   \les    \Vert \grad u \Vert_{L^\infty} (\Vert \grad \theta \Vert_{L^2}+1)   ,   \end{split}   \label{EWRTDFBSERSDFGSDFHDSFADSFSDFSVBASWERTDFSGSDFGFDGASFASDFDSFHFDGHFH46}   \end{align} which implies that the exponential growth of $\Vert \grad \theta \Vert_{L^2}$ is determined by  the time integral of $\Vert \grad u \Vert_{L^\infty}$.  In particular, applying \eqref{EWRTDFBSERSDFGSDFHDSFADSFSDFSVBASWERTDFSGSDFGFDGASFASDFDSFHFDGHFH159} to \eqref{EWRTDFBSERSDFGSDFHDSFADSFSDFSVBASWERTDFSGSDFGFDGASFASDFDSFHFDGHFH46} yields   \begin{equation}    \Vert \theta\Vert_{H^{1}}    \les 1    \comma t\in[0,T]    ,    \label{EWRTDFBSERSDFGSDFHDSFADSFSDFSVBASWERTDFSGSDFGFDGASFASDFDSFHFDGHFH160}   \end{equation} for all $T>0$, where the constant depends on $T$. \par Next, we  fix $\epsilon\in(0,1]$ and claim that    \begin{align}   \begin{split}   \Vert \theta(t) \Vert_{H^1}    \les e^{\epsilon t}    \comma t\geq 0   ,   \end{split}    \label{EWRTDFBSERSDFGSDFHDSFADSFSDFSVBASWERTDFSGSDFGFDGASFASDFDSFHFDGHFH43}   \end{align} where we allow all constants to depend on $\epsilon$. Note that \eqref{EWRTDFBSERSDFGSDFHDSFADSFSDFSVBASWERTDFSGSDFGFDGASFASDFDSFHFDGHFH43} directly implies \eqref{EWRTDFBSERSDFGSDFHDSFADSFSDFSVBASWERTDFSGSDFGFDGASFASDFDSFHFDGHFH11} by the definition \eqref{EWRTDFBSERSDFGSDFHDSFADSFSDFSVBASWERTDFSGSDFGFDGASFASDFDSFHFDGHFH123}. To prove \eqref{EWRTDFBSERSDFGSDFHDSFADSFSDFSVBASWERTDFSGSDFGFDGASFASDFDSFHFDGHFH43}, we need to estimate the time integral of $\Vert \nabla u\Vert_{L^\infty}$. Let $0<t_0\leq t_1$, where $t_0\geq2$ is a large time to be determined based on $\epsilon$. By the Gagliardo-Nirenberg in space and H\"older's inequalities in time, we have, using \eqref{EWRTDFBSERSDFGSDFHDSFADSFSDFSVBASWERTDFSGSDFGFDGASFASDFDSFHFDGHFH32}   \begin{align}   \begin{split}   \int_{t_1}^{t_1+1} \Vert \grad u \Vert_{L^\infty}   &\leq    \int_{t_1}^{t_1+1}      \Bigl(      \Vert \grad u \Vert_{L^2}^{1/4}      \Vert \Delta u \Vert_{L^3}^{3/4}      +      \Vert \grad u \Vert_{L^2}          \Bigr)    \\&   \leq    C     \left( \int_{t_1}^{t_1+1} \Vert \grad u \Vert_{L^2}^{1/3} \right)^{3/4}     \left( \int_{t_1}^{t_1+1} \Vert \Delta u \Vert_{L^3}^3  \right)^{1/4}     + \frac12\epsilon    ,   \end{split}    \label{EWRTDFBSERSDFGSDFHDSFADSFSDFSVBASWERTDFSGSDFGFDGASFASDFDSFHFDGHFH47}   \end{align} provided $t_0$ is sufficiently large. To bound the $L^3 L^3$ norm of $\Delta u$, we introduce a smooth cut-off function  $\phi\colon [0,\infty) \to [0,1]$, where $\phi(t)=0$ on $[0,t_1-1]$ and $\phi(t)=1$ on $[t_1,\infty]$ with $|\phi'|\les 1$. Now we consider the equation   \begin{equation}    (\phi u)_{t}     - \Delta(\phi u)     + \nabla (\phi p)     = \phi' u       - u\cdot \nabla(\phi u)       + \phi \rho e_2    \llabel{ Cm m P3d WSb jst oj 3oGj advz qcMB6Y 6k D 9sZ 0bd Mjt UT hULG TWU9 Nmr3E4 CN b zUO vTh hqL 1p xAxT ezrH dVMgLY TT r Sfx LUX CMr WA bE69 K6XH i5re1f x4 G DKk iB7 f2D Xz Xez2 k2Yc Yc4QjU yM Y R1o DeY NWf 74 hByF dsWk 4cUbCR DX a q4e DWd 7qb Ot 7GOu oklg jJ00J9 Il O Jxn tzF VBC Ft pABp VLEE 2y5Qcg b3 5 DU4 igj 4dz zW soNF wvqj bNFma0 am F Kiv EWRTDFBSERSDFGSDFHDSFADSFSDFSVBASWERTDFSGSDFGFDGASFASDFDSFHFDGHFH164}   \end{equation} which follows from \eqref{EWRTDFBSERSDFGSDFHDSFADSFSDFSVBASWERTDFSGSDFGFDGASFASDFDSFHFDGHFH01}$_1$; note that $\nabla \cdot(\phi u)=0$ since $\phi$ is a function of time only. Using the $W^{2,3}$ estimate due to Sohr and Von~Wahl \cite{SvW} we have, similarly to \eqref{EWRTDFBSERSDFGSDFHDSFADSFSDFSVBASWERTDFSGSDFGFDGASFASDFDSFHFDGHFH167}--\eqref{EWRTDFBSERSDFGSDFHDSFADSFSDFSVBASWERTDFSGSDFGFDGASFASDFDSFHFDGHFH170},   \begin{align}   \begin{split}   \int_{t_1}^{t_1+1} \Vert D^2u \Vert_{L^3}^3   & \les   \int_{t_1-1}^{t_1+1} \Vert u \cdot \grad(\phi u) \Vert_{L^3}^3   +   \int_{t_1-1}^{t_1+1} \Vert \phi' u \Vert_{L^3}^3   +   \int_{t_1-1}^{t_1+1} \Vert \rho \Vert_{L^3}^3   \\&   \les   \int_{t_1-1}^{t_1+1} \Vert u \Vert_{L^6}^3\Vert \grad u \Vert_{L^6}^3   +   \int_{t_1-1}^{t_1+1} \Vert u \Vert_{L^3}^3   +   1   \\&   \les   \int_{t_1-1}^{t_1+1} \Vert u \Vert_{L^2}   \Vert \grad u \Vert_{L^2}^3   \Vert Au \Vert_{L^2}^2   +   \int_{t_1-1}^{t_1+1} \Vert u \Vert_{L^4}^4   +   1   \les   1    \comma t\geq0   \end{split}    \label{EWRTDFBSERSDFGSDFHDSFADSFSDFSVBASWERTDFSGSDFGFDGASFASDFDSFHFDGHFH48}   \end{align} where we used \eqref{EWRTDFBSERSDFGSDFHDSFADSFSDFSVBASWERTDFSGSDFGFDGASFASDFDSFHFDGHFH08}, \eqref{EWRTDFBSERSDFGSDFHDSFADSFSDFSVBASWERTDFSGSDFGFDGASFASDFDSFHFDGHFH17}, and \eqref{EWRTDFBSERSDFGSDFHDSFADSFSDFSVBASWERTDFSGSDFGFDGASFASDFDSFHFDGHFH122}. Also, for the first factor of the first term in \eqref{EWRTDFBSERSDFGSDFHDSFADSFSDFSVBASWERTDFSGSDFGFDGASFASDFDSFHFDGHFH47}, we use \eqref{EWRTDFBSERSDFGSDFHDSFADSFSDFSVBASWERTDFSGSDFGFDGASFASDFDSFHFDGHFH09} to obtain that  for any $\epsilon_0>0$ there exists $t_0\geq1$ sufficiently large so that   \begin{equation}     \left( \int_{t_1-1}^{t_1+1} \Vert \grad u \Vert_{L^2}^{1/3} dt\right)^{3/4}      \leq       \epsilon_0 \epsilon     .    \label{EWRTDFBSERSDFGSDFHDSFADSFSDFSVBASWERTDFSGSDFGFDGASFASDFDSFHFDGHFH50}   \end{equation} Thus, using \eqref{EWRTDFBSERSDFGSDFHDSFADSFSDFSVBASWERTDFSGSDFGFDGASFASDFDSFHFDGHFH48} and \eqref{EWRTDFBSERSDFGSDFHDSFADSFSDFSVBASWERTDFSGSDFGFDGASFASDFDSFHFDGHFH50} in \eqref{EWRTDFBSERSDFGSDFHDSFADSFSDFSVBASWERTDFSGSDFGFDGASFASDFDSFHFDGHFH47}, we obtain   \begin{equation}     \int_{t_1}^{t_1+1} \Vert \grad u \Vert_{L^\infty} dt      \leq C \epsilon_0\epsilon  + \frac12 \epsilon    \comma t\geq t_0    ,    \llabel{Aap pzM zr VqYf OulM HafaBk 6J r eOQ BaT EsJ BB tHXj n2EU CNleWp cv W JIg gWX Ksn B3 wvmo WK49 Nl492o gR 6 fvc 8ff jJm sW Jr0j zI9p CBsIUV of D kKH Ub7 vxp uQ UXA6 hMUr yvxEpc Tq l Tkz z0q HbX pO 8jFu h6nw zVPPzp A8 9 61V 78c O2W aw 0yGn CHVq BVjTUH lk p 6dG HOd voE E8 cw7Q DL1o 1qg5TX qo V 720 hhQ TyF tp TJDg 9E8D nsp1Qi X9 8 ZVQ N3s duZ qcEWRTDFBSERSDFGSDFHDSFADSFSDFSVBASWERTDFSGSDFGFDGASFASDFDSFHFDGHFH51}   \end{equation} for $t_0\geq1$ sufficiently large, which in turn implies   \begin{equation}    \int_{t_0}^t \Vert \grad u \Vert_{L^\infty} dt \leq \epsilon (t-t_0)    \comma t\geq t_0       \label{EWRTDFBSERSDFGSDFHDSFADSFSDFSVBASWERTDFSGSDFGFDGASFASDFDSFHFDGHFH165}   \end{equation} if we choose $\epsilon_0$ a sufficiently small constant. Note that \eqref{EWRTDFBSERSDFGSDFHDSFADSFSDFSVBASWERTDFSGSDFGFDGASFASDFDSFHFDGHFH165} is obtained by adding the integrals of unit length. Returning to \eqref{EWRTDFBSERSDFGSDFHDSFADSFSDFSVBASWERTDFSGSDFGFDGASFASDFDSFHFDGHFH46}, we find that Gronwall's inequality implies   \begin{align}   \begin{split}   \Vert \grad \theta(t) \Vert_{L^2}    \leq    (\Vert \grad \theta(t_0) \Vert_{L^2} +1)   e^{\epsilon (t-t_0)}   .   \end{split}   \label{EWRTDFBSERSDFGSDFHDSFADSFSDFSVBASWERTDFSGSDFGFDGASFASDFDSFHFDGHFH52}   \end{align} Finally, we use \eqref{EWRTDFBSERSDFGSDFHDSFADSFSDFSVBASWERTDFSGSDFGFDGASFASDFDSFHFDGHFH160} implying   \begin{equation}    \Vert \theta(t_0)\Vert_{H^{1}}    \les 1    ,    \label{EWRTDFBSERSDFGSDFHDSFADSFSDFSVBASWERTDFSGSDFGFDGASFASDFDSFHFDGHFH162}   \end{equation} where the constant depends on $t_0$, which in turn only depends on $\epsilon$. Combining \eqref{EWRTDFBSERSDFGSDFHDSFADSFSDFSVBASWERTDFSGSDFGFDGASFASDFDSFHFDGHFH52} and \eqref{EWRTDFBSERSDFGSDFHDSFADSFSDFSVBASWERTDFSGSDFGFDGASFASDFDSFHFDGHFH162} leads to the claimed inequality \eqref{EWRTDFBSERSDFGSDFHDSFADSFSDFSVBASWERTDFSGSDFGFDGASFASDFDSFHFDGHFH43}. \end{proof} \par We noted that the initial assumptions of Theorem~\ref{T01} can be relaxed,  implying the conclusions of Theorem~\ref{T01} for  less restrictive initial conditions than those  required for \eqref{EWRTDFBSERSDFGSDFHDSFADSFSDFSVBASWERTDFSGSDFGFDGASFASDFDSFHFDGHFH17}. \par \cole \begin{thm} \label{T03} Let $(u_0,\rho_0) \in V \times H^1(\Omega)$. Then there exists a unique solution $(u,\rho)$ such that $u \in L_{\loc}^2([0,\infty);D(A)) \cap C([0,\infty);H)$ and $\rho \in L_{\loc}^\infty([0,\infty);H^1(\Omega))$, which moreover satisfies    \begin{equation}    \Vert Au\Vert_{L^2}\leq C_{\delta}       \comma t\geq \delta    ,    \label{EWRTDFBSERSDFGSDFHDSFADSFSDFSVBASWERTDFSGSDFGFDGASFASDFDSFHFDGHFH125}   \end{equation}  where $\delta>0$ is arbitrary. \end{thm}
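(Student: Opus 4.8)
The plan is to deduce Theorem~\ref{T03} from Theorem~\ref{T01} by an instantaneous-smoothing argument: with only $u_0\in V$ I would first produce a unique local solution, then show that the velocity enters $D(A)$ at some arbitrarily small positive time $t_*$, and finally restart the evolution at $t_*$, where the hypotheses of Theorem~\ref{T01} are met. To begin, I would establish local existence and uniqueness for $(u_0,\rho_0)\in V\times H^1$ by a Galerkin scheme. The differential inequality behind \eqref{EWRTDFBSERSDFGSDFHDSFADSFSDFSVBASWERTDFSGSDFGFDGASFASDFDSFHFDGHFH28}, which after absorption reads $\ddt\Vert A^{1/2}u\Vert_{L^2}^2+\Vert Au\Vert_{L^2}^2\les\Vert A^{1/2}u\Vert_{L^2}^4+1$, produces a time $T_0=T_0(\Vert A^{1/2}u_0\Vert_{L^2})>0$ and a solution with $u\in L^\infty(0,T_0;V)\cap L^2(0,T_0;D(A))$; in particular $\int_0^{T_0}\Vert Au\Vert_{L^2}^2\les1$, so $Au(t)\in L^2$ for a.e. $t\in(0,T_0)$. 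Uniqueness would follow from the usual energy estimate on the difference of two solutions, the density difference being carried in~$L^2$.

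The delicate point is to propagate the $H^1$ bound of the density \emph{up to} $t=0$, so as to obtain $\rho\in L^\infty_{\loc}([0,\infty);H^1)$. Here I would integrate the transport inequality \eqref{EWRTDFBSERSDFGSDFHDSFADSFSDFSVBASWERTDFSGSDFGFDGASFASDFDSFHFDGHFH46}, namely $\ddt\Vert\grad\theta\Vert_{L^2}\les\Vert\grad u\Vert_{L^\infty}(\Vert\grad\theta\Vert_{L^2}+1)$, which requires $\int_0^{T}\Vert\grad u\Vert_{L^\infty}\,dt<\infty$. This is exactly where the relaxed data bites: the Sohr--von Wahl bound \eqref{EWRTDFBSERSDFGSDFHDSFADSFSDFSVBASWERTDFSGSDFGFDGASFASDFDSFHFDGHFH167} used in Theorem~\ref{T01} carries the initial term $\Vert A_3^{2/3+\tepsilon}u_0\Vert_{L^3}$, finite for $u_0\in D(A)$ but not for $u_0\in V$. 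Instead I would split $u=e^{-tA}u_0+\int_0^t e^{-(t-s)A}\mathbb{P}f(s)\,ds$ with $f=-u\cdot\grad u+\rho e_2$. The forcing satisfies $f\in L^3(0,T_0;L^3)$: by \eqref{EWRTDFBSERSDFGSDFHDSFADSFSDFSVBASWERTDFSGSDFGFDGASFASDFDSFHFDGHFH168} the quadratic part is controlled using $\Vert A^{1/2}u\Vert_{L^2}\les1$ and $\int_0^{T_0}\Vert Au\Vert_{L^2}^2\les1$, while $\Vert\rho(t)\Vert_{L^3}=\Vert\rho_0\Vert_{L^3}\les1$ is \emph{conserved} by the divergence-free transport (with $\rho_0\in H^1\hookrightarrow L^3$ in two dimensions), so no circularity with the $H^1$ bound arises. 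Maximal regularity then places the Duhamel term in $L^3(0,T_0;W^{2,3})\hookrightarrow L^1(0,T_0;W^{1,\infty})$, as in \eqref{EWRTDFBSERSDFGSDFHDSFADSFSDFSVBASWERTDFSGSDFGFDGASFASDFDSFHFDGHFH159}. For the linear part, analyticity of the Stokes semigroup gives $\Vert A_3^{5/6+\eta}e^{-tA}u_0\Vert_{L^3}\les t^{-5/6-\eta}\Vert u_0\Vert_{L^3}$, and since $V\hookrightarrow L^3$ while $W^{2,3}\hookrightarrow W^{1,\infty}$ in two dimensions, for $\eta$ small the time singularity is of order strictly less than one, hence integrable. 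Therefore $\int_0^{T_0}\Vert\grad u\Vert_{L^\infty}\,dt<\infty$, and $\theta$, hence $\rho$, stays bounded in $H^1$ on every $[0,T]$.

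With this in hand, I would select $t_*\in(0,\min(\delta,T_0))$ with $u(t_*)\in D(A)$ (possible since $Au\in L^2(0,T_0;L^2)$) and $\rho(t_*)\in H^1$, and apply Theorem~\ref{T01} with initial time $t_*$ and data $(u(t_*),\rho(t_*))\in(H^2\cap V)\times H^1$. This furnishes the global solution on $[t_*,\infty)$, the regularity asserted in the statement, and the uniform bound $\Vert Au(t)\Vert_{L^2}\le C$ for $t\ge t_*$ from \eqref{EWRTDFBSERSDFGSDFHDSFADSFSDFSVBASWERTDFSGSDFGFDGASFASDFDSFHFDGHFH08}. Since $t_*<\delta$, this yields \eqref{EWRTDFBSERSDFGSDFHDSFADSFSDFSVBASWERTDFSGSDFGFDGASFASDFDSFHFDGHFH125} for all $t\ge\delta$, with $C_\delta$ depending on $\delta$ through $\Vert Au(t_*)\Vert_{L^2}$ and $\Vert\rho(t_*)\Vert_{H^1}$. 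Patching the local solution on $[0,t_*]$ with the restarted one on $[t_*,\infty)$, which agree by uniqueness, completes the global existence and the stated function-space membership.

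The main obstacle is the second step, i.e. the integrability of $\Vert\grad u\Vert_{L^\infty}$ near $t=0$ under the weaker hypothesis $u_0\in V$: this is the only place where the reduced regularity of the data genuinely alters the analysis of Theorem~\ref{T01}, since once the restart time $t_*$ is reached the argument is that of Theorem~\ref{T01} verbatim. The crux there is the exponent bookkeeping showing that the Stokes semigroup applied to $V$ data produces a $W^{1,\infty}$ singularity of order $5/6+\eta<1$ in time, which is precisely what makes the density remain in $H^1$ up to the initial instant.
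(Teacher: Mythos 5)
Your proposal is correct and follows essentially the same route as the paper: local existence on a short interval $[0,T]$ from the differential inequality $\ddt \Vert A^{1/2}u\Vert_{L^2}^2+\Vert Au\Vert_{L^2}^2\les \Vert A^{1/2}u\Vert_{L^2}^4+1$, giving $u\in L^\infty([0,T];V)\cap L^2([0,T];D(A))$; uniqueness in the class $V\times H^1(\Omega)$ by testing the velocity difference with $Au$ and carrying the density difference in $L^2$; and then a restart at a time $t_0\in(0,\delta)$ with $u(t_0)\in D(A)$ (available for a.e.\ $t$), after which Theorem~\ref{T01} takes over and yields \eqref{EWRTDFBSERSDFGSDFHDSFADSFSDFSVBASWERTDFSGSDFGFDGASFASDFDSFHFDGHFH125}. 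The one place where you do more than the paper is the propagation of the $H^1$ bound on the density down to $t=0$: the paper simply invokes $\Vert \nabla\rhotwo\Vert_{L^2}\les 1$ on $[0,T]$, whereas you observe that the Sohr--von Wahl bound \eqref{EWRTDFBSERSDFGSDFHDSFADSFSDFSVBASWERTDFSGSDFGFDGASFASDFDSFHFDGHFH167} is unavailable for $u_0\in V$ (its data term requires roughly $D(A)$ regularity) and replace it by a Duhamel splitting, in which the Stokes semigroup applied to $V\hookrightarrow L^3$ data contributes an integrable $W^{1,\infty}$ singularity of order $5/6+\eta<1$, while maximal regularity and the conservation of $\Vert\rho\Vert_{L^3}$ handle the forcing without circularity. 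This supplement is correct and fills in a step the paper leaves implicit; it is not needed for the bound \eqref{EWRTDFBSERSDFGSDFHDSFADSFSDFSVBASWERTDFSGSDFGFDGASFASDFDSFHFDGHFH125} itself, which only uses the evolution after the restart time, but it is what actually justifies the asserted membership $\rho\in L^{\infty}_{\loc}([0,\infty);H^1(\Omega))$ up to the initial instant.
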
 \colb \par \begin{proof}[Proof of Theorem~\ref{T03}] Let $(u,\rho)$ be a solution to \eqref{EWRTDFBSERSDFGSDFHDSFADSFSDFSVBASWERTDFSGSDFGFDGASFASDFDSFHFDGHFH20} on $[0,T]$ where $T\in(0,1]$. Integrating \eqref{EWRTDFBSERSDFGSDFHDSFADSFSDFSVBASWERTDFSGSDFGFDGASFASDFDSFHFDGHFH21} in time, we obtain   \begin{align}   \Vert u(t) \Vert_{L^2}^2 +\Vert \theta(t) \Vert_{L^2}^2    + \int_0^t \Vert \grad u \Vert_{L^2}^2   \les   \Vert u_0 \Vert_{L^2}^2 + \Vert \theta_0 \Vert_{L^2}^2   \les   1    \comma t\in[0,T].   \label{EWRTDFBSERSDFGSDFHDSFADSFSDFSVBASWERTDFSGSDFGFDGASFASDFDSFHFDGHFH54}  \end{align} We note that all constants are allowed to depend on $\Vert u_0\Vert_{V}$ and $\Vert \rho_0\Vert_{H^{1}}$. We use this inequality in \eqref{EWRTDFBSERSDFGSDFHDSFADSFSDFSVBASWERTDFSGSDFGFDGASFASDFDSFHFDGHFH28} obtaining   \begin{align}   \begin{split}   \frac{d}{dt} \Vert A^{1/2}u\Vert_{L^2}^2 + \Vert Au \Vert_{L^2}^2   \les   \Vert A^{1/2}u \Vert_{L^2}^4   + 1   ,   \end{split}   \label{EWRTDFBSERSDFGSDFHDSFADSFSDFSVBASWERTDFSGSDFGFDGASFASDFDSFHFDGHFH56}   \end{align} which implies, along with \eqref{EWRTDFBSERSDFGSDFHDSFADSFSDFSVBASWERTDFSGSDFGFDGASFASDFDSFHFDGHFH54}, that upon suitably reducing $T>0$, we have $u \in L^\infty([0,T] ; V) \cap L^2([0,T] ; D(A))$ and   \begin{equation}    \Vert A^{1/2}u\Vert_{L^2}^2    \les    1    \comma t\in[0,T]    ,    \label{EWRTDFBSERSDFGSDFHDSFADSFSDFSVBASWERTDFSGSDFGFDGASFASDFDSFHFDGHFH124}   \end{equation} and then    \begin{equation}    \int_{0}^{T}    \Vert A u\Vert_{L^2}^2    \les    1    \comma t\in[0,T]    ,    \label{EWRTDFBSERSDFGSDFHDSFADSFSDFSVBASWERTDFSGSDFGFDGASFASDFDSFHFDGHFH129}   \end{equation} upon returning to \eqref{EWRTDFBSERSDFGSDFHDSFADSFSDFSVBASWERTDFSGSDFGFDGASFASDFDSFHFDGHFH56}. Note that   \begin{align}    \begin{split}    \Vert u_t\Vert_{L^2}^2    &\les    \Vert A u\Vert_{L^2}^2       +    \Vert B(u,u)\Vert_{L^2}^2       +    \Vert \rho\Vert_{L^2}^2    \les    \Vert A u\Vert_{L^2}^2       + \Vert u\Vert_{L^2}      \Vert A^{1/2}u\Vert_{L^2}^2      \Vert A u\Vert_{L^2}         + 1    \\&    \les    \Vert A u\Vert_{L^2}^2       + \Vert u\Vert_{L^2}^2      \Vert A^{1/2}u\Vert_{L^2}^4    + 1    \les    \Vert A u\Vert_{L^2}^2   + 1    ,    \end{split}    \label{EWRTDFBSERSDFGSDFHDSFADSFSDFSVBASWERTDFSGSDFGFDGASFASDFDSFHFDGHFH31}   \end{align} by \eqref{EWRTDFBSERSDFGSDFHDSFADSFSDFSVBASWERTDFSGSDFGFDGASFASDFDSFHFDGHFH17}, \eqref{EWRTDFBSERSDFGSDFHDSFADSFSDFSVBASWERTDFSGSDFGFDGASFASDFDSFHFDGHFH131}, and \eqref{EWRTDFBSERSDFGSDFHDSFADSFSDFSVBASWERTDFSGSDFGFDGASFASDFDSFHFDGHFH122}. From \eqref{EWRTDFBSERSDFGSDFHDSFADSFSDFSVBASWERTDFSGSDFGFDGASFASDFDSFHFDGHFH129} and \eqref{EWRTDFBSERSDFGSDFHDSFADSFSDFSVBASWERTDFSGSDFGFDGASFASDFDSFHFDGHFH31}, we obtain $u_t \in L^2([0,T]; H)$. Thus, we may modify $u$ on a measure zero subset of $[0,T]$ so that $u \in C([0,T]; H)$. \par In order to prove \eqref{EWRTDFBSERSDFGSDFHDSFADSFSDFSVBASWERTDFSGSDFGFDGASFASDFDSFHFDGHFH125}, we first need to show uniqueness in the class $V\times H^{1}(\Omega)$. Thus, let $(\uone, \thetaone)$ and $(\utwo,\thetatwo)$ be solutions to the Boussinesq equation, and define $u = \uone - \utwo$ and $\rho= \rhoone-\rhotwo$ with both solutions satisfying the bounds \eqref{EWRTDFBSERSDFGSDFHDSFADSFSDFSVBASWERTDFSGSDFGFDGASFASDFDSFHFDGHFH124} and \eqref{EWRTDFBSERSDFGSDFHDSFADSFSDFSVBASWERTDFSGSDFGFDGASFASDFDSFHFDGHFH129} on $[0,T]$. Then subtracting the evolution equations \eqref{EWRTDFBSERSDFGSDFHDSFADSFSDFSVBASWERTDFSGSDFGFDGASFASDFDSFHFDGHFH01}$_1$ for $\uone$ and $\utwo$ and testing the equation for the difference with  $Au$, we acquire   \begin{align}   \begin{split}   &\frac12\frac{d}{dt} \Vert A^{1/2} u \Vert_{L^2}^2    + \Vert Au \Vert_{L^2}^2   \\&\indeq   \les    \Vert \uone\Vert_{L^2}^{1/2}    \Vert A^{1/2}\uone\Vert_{L^2}^{1/2}    \Vert A^{1/2}u\Vert_{L^2}^{1/2}    \Vert Au\Vert_{L^2}^{3/2}    \\&\indeq\indeq    +    \Vert u\Vert_{L^2}^{1/2}    \Vert A^{1/2}u\Vert_{L^2}^{1/2}    \Vert A^{1/2}\utwo\Vert_{L^2}^{1/2}    \Vert A\utwo\Vert_{L^2}^{1/2}    \Vert Au\Vert_{L^2}
   +    \Vert \rho\Vert_{L^2}    \Vert Au\Vert_{L^2}   ,   \end{split}    \label{EWRTDFBSERSDFGSDFHDSFADSFSDFSVBASWERTDFSGSDFGFDGASFASDFDSFHFDGHFH57}   \end{align} whence, using the bounds on $\uone$ and $\utwo$ and absorbing factors of  $\Vert Au\Vert_{L^2}$, we get   \begin{align}   \begin{split}   &\frac{d}{dt} \Vert A^{1/2} u \Vert_{L^2}^2    + \Vert Au \Vert_{L^2}^2   \les    \Vert A^{1/2}u\Vert_{L^2}^{2}    +    \Vert u\Vert_{L^2}    \Vert A^{1/2}u\Vert_{L^2}    \Vert A\utwo\Vert_{L^2}    +    \Vert \rho\Vert_{L^2}^2    .   \end{split}   \llabel{ n9IX ozWh Fd16IB 0K 9 JeB Hvi 364 kQ lFMM JOn0 OUBrnv pY y jUB Ofs Pzx l4 zcMn JHdq OjSi6N Mn 8 bR6 kPe klT Fd VlwD SrhT 8Qr0sC hN h 88j 8ZA vvW VD 03wt ETKK NUdr7W EK 1 jKS IHF Kh2 sr 1RRV Ra8J mBtkWI 1u k uZT F2B 4p8 E7 Y3p0 DX20 JM3XzQ tZ 3 bMC vM4 DEA wB Fp8q YKpL So1a5s dR P fTg 5R6 7v1 T4 eCJ1 qg14 CTK7u7 ag j Q0A tZ1 Nh6 hk Sys5 CWonEWRTDFBSERSDFGSDFHDSFADSFSDFSVBASWERTDFSGSDFGFDGASFASDFDSFHFDGHFH130}   \end{align} On the other hand, from the density equations for $\rhoone$ and $\rhotwo$, we get   \begin{align}    \begin{split}    \frac{d}{dt}\Vert \rho\Vert_{L^2}^2     &    \les    \Vert u\Vert_{L^\infty}    \Vert \nabla\rhotwo\Vert_{L^2}    \Vert\rho\Vert_{L^2}    \les    \Vert u\Vert_{L^2}^{1/2}    \Vert A u\Vert_{L^2}^{1/2}    \Vert \nabla\rhotwo\Vert_{L^2}    \Vert\rho\Vert_{L^2}    \\&    \les    \epsilon_0    \Vert A u\Vert_{L^2}^2    +    \Vert u\Vert_{L^2}^{2/3}    \Vert \nabla\rhotwo\Vert_{L^2}^{4/3}    \Vert\rho\Vert_{L^2}^{4/3}    \\&    \les    \epsilon_0    \Vert A u\Vert_{L^2}^2    +    (\Vert A^{1/2}u\Vert_{L^2}^{2} + \Vert\rho\Vert_{L^2}^{2})    \Vert \nabla\rhotwo\Vert_{L^2}^{4/3}    ,    \end{split}    \label{EWRTDFBSERSDFGSDFHDSFADSFSDFSVBASWERTDFSGSDFGFDGASFASDFDSFHFDGHFH127}   \end{align} where $\epsilon_0$ is a sufficiently small constant to be determined. Adding \eqref{EWRTDFBSERSDFGSDFHDSFADSFSDFSVBASWERTDFSGSDFGFDGASFASDFDSFHFDGHFH57} and \eqref{EWRTDFBSERSDFGSDFHDSFADSFSDFSVBASWERTDFSGSDFGFDGASFASDFDSFHFDGHFH127}, choosing $\epsilon_0$ sufficiently small and absorbing factors of $\Vert Au\Vert_{L^2}^2$, we obtain   \begin{align}    \begin{split}    &\frac{d}{dt}       (\Vert A^{1/2} u \Vert_{L^2}^2        +  \Vert \rho\Vert_{L^2}^2      )    \les    \Vert A^{1/2}u\Vert_{L^2}^{2}    +    \Vert A\utwo\Vert_{L^2}    \Vert A^{1/2}u\Vert_{L^2}^2    +       \Vert u\Vert_{L^2}^2    +    \Vert\rho\Vert_{L^2}^2    \colb    ,    \end{split}    \label{EWRTDFBSERSDFGSDFHDSFADSFSDFSVBASWERTDFSGSDFGFDGASFASDFDSFHFDGHFH128}   \end{align} where we used $\Vert \nabla\rhotwo\Vert_{L^2}\les 1$ for $t\in[0,T]$ on the last term in \eqref{EWRTDFBSERSDFGSDFHDSFADSFSDFSVBASWERTDFSGSDFGFDGASFASDFDSFHFDGHFH127}, subject to reducing $T$. Applying a Gronwall argument to \eqref{EWRTDFBSERSDFGSDFHDSFADSFSDFSVBASWERTDFSGSDFGFDGASFASDFDSFHFDGHFH128} and using \eqref{EWRTDFBSERSDFGSDFHDSFADSFSDFSVBASWERTDFSGSDFGFDGASFASDFDSFHFDGHFH129} for $\utwo$, we conclude that  $u=0$, whence $\uone=\utwo$ on $[0,T]$. \par In order to obtain \eqref{EWRTDFBSERSDFGSDFHDSFADSFSDFSVBASWERTDFSGSDFGFDGASFASDFDSFHFDGHFH125}, we observe that $u(t) \in D(A)$ for a.e. $t \in [0,T]$ by \eqref{EWRTDFBSERSDFGSDFHDSFADSFSDFSVBASWERTDFSGSDFGFDGASFASDFDSFHFDGHFH129}. We choose  $t_0 \in (0,\delta)$ such that $u(t_0) \in D(A)$. Since $u$ is unique on $[t_0,\infty)$, we may apply Theorem~\ref{T01} and obtain \eqref{EWRTDFBSERSDFGSDFHDSFADSFSDFSVBASWERTDFSGSDFGFDGASFASDFDSFHFDGHFH125} for $t\geq t_0$, concluding the proof. \end{proof} \par We remark that similar arguments show analogous reduced required regularity for $u$ in Theorems \ref{T02} and~\ref{T04}. \par \colb Next, we address a higher regularity norm. \par \begin{proof}[Proof of Theorem~\ref{T02}]  We start with a~priori estimates and at the end of the proof we provide a sketch of the justification. Taking a time derivative of \eqref{EWRTDFBSERSDFGSDFHDSFADSFSDFSVBASWERTDFSGSDFGFDGASFASDFDSFHFDGHFH01}$_1$, we  obtain   \begin{align}    \begin{split}    u_{tt}       - \Laplace u_t       + u_t \cdot \grad u       + u \cdot \grad u_t      + \grad p_t       = \rho_t e_2     ,    \end{split}    \llabel{ IOqgCL 3u 7 feR BHz odS Jp 7JH8 u6Rw sYE0mc P4 r LaW Atl yRw kH F3ei UyhI iA19ZB u8 m ywf 42n uyX 0e ljCt 3Lkd 1eUQEZ oO Z rA2 Oqf oQ5 Ca hrBy KzFg DOseim 0j Y BmX csL Ayc cC JBTZ PEjy zPb5hZ KW O xT6 dyt u82 Ia htpD m75Y DktQvd Nj W jIQ H1B Ace SZ KVVP 136v L8XhMm 1O H Kn2 gUy kFU wN 8JML Bqmn vGuwGR oW U oNZ Y2P nmS 5g QMcR YHxL yHuDo8 baEWRTDFBSERSDFGSDFHDSFADSFSDFSVBASWERTDFSGSDFGFDGASFASDFDSFHFDGHFH59}   \end{align} which, after testing  with $u_{tt}$ gives   \begin{align}    \begin{split}    &    \frac12 \frac{d}{dt}      \Vert \nabla u_t\Vert_{L^2}^2      + \Vert u_{tt}\Vert_{L^2}^2      \\&\indeq      =       - \int_{\Omega} u_t\cdot \nabla u_j \FGSDFHGFHDFGHDFGH_{tt} u_j       - \int_{\Omega} u\cdot \nabla \FGSDFHGFHDFGHDFGH_{t}u_j \FGSDFHGFHDFGHDFGH_{tt} u_j       + \int_{\Omega} (\rho_t e_2)\cdot u_{tt}     \\&\indeq     \les     \Vert u_t\Vert_{L^2}^{1/2}     \Vert \nabla u_{t}\Vert_{L^2}^{1/2}     \Vert \nabla u\Vert_{L^2}^{1/2}     \Vert A u\Vert_{L^2}^{1/2}     \Vert u_{tt}\Vert_{L^2}     +     \Vert u\Vert_{L^{\infty}}     \Vert \nabla u_t\Vert_{L^2}     \Vert u_{tt}\Vert_{L^2}     +     \Vert \rho_t\Vert_{L^2}     \Vert u_{tt}\Vert_{L^2}         .    \end{split}    \llabel{ w aqM NYt onW u2 YIOz eB6R wHuGcn fi o 47U PM5 tOj sz QBNq 7mco fCNjou 83 e mcY 81s vsI 2Y DS3S yloB Nx5FBV Bc 9 6HZ EOX UO3 W1 fIF5 jtEM W6KW7D 63 t H0F CVT Zup Pl A9aI oN2s f1Bw31 gg L FoD O0M x18 oo heEd KgZB Cqdqpa sa H Fhx BrE aRg Au I5dq mWWB MuHfv9 0y S PtG hFF dYJ JL f3Ap k5Ck Szr0Kb Vd i sQk uSA JEn DT YkjP AEMu a0VCtC Ff z 9R6 VhtEWRTDFBSERSDFGSDFHDSFADSFSDFSVBASWERTDFSGSDFGFDGASFASDFDSFHFDGHFH60}   \end{align} Now we apply $\Vert u\Vert_{L^\infty}\les \Vert u\Vert_{L^2}^{1/2}\Vert Au\Vert_{L^2}^{1/2}$ for the second term and   \begin{equation}    \Vert \rho_t\Vert_{L^2}=\Vert u\cdot \nabla \rho\Vert_{L^2}    \les \Vert u\Vert_{L^\infty}\Vert\nabla \rho\Vert_{L^2}       ,    \llabel{ 8Ua cB e7op AnGa 7AbLWj Hc s nAR GMb n7a 9n paMf lftM 7jvb20 0T W xUC 4lt e92 9j oZrA IuIa o1Zqdr oC L 55L T4Q 8kN yv sIzP x4i5 9lKTq2 JB B sZb QCE Ctw ar VBMT H1QR 6v5srW hR r D4r wf8 ik7 KH Egee rFVT ErONml Q5 L R8v XNZ LB3 9U DzRH ZbH9 fTBhRw kA 2 n3p g4I grH xd fEFu z6RE tDqPdw N7 H TVt cE1 8hW 6y n4Gn nCE3 MEQ51i Ps G Z2G Lbt CSt hu zvEWRTDFBSERSDFGSDFHDSFADSFSDFSVBASWERTDFSGSDFGFDGASFASDFDSFHFDGHFH55}   \end{equation} by \eqref{EWRTDFBSERSDFGSDFHDSFADSFSDFSVBASWERTDFSGSDFGFDGASFASDFDSFHFDGHFH01}$_2$, on the last. Absorbing the factors of $\Vert u_{tt}\Vert_{L^2}$, we obtain   \begin{align}    \begin{split}    &    \frac12 \frac{d}{dt}      \Vert \nabla u_t\Vert_{L^2}^2      + \Vert u_{tt}\Vert_{L^2}^2     \\&\indeq     \les     \Vert u_t\Vert_{L^2}     \Vert \nabla u_t\Vert_{L^2}     \Vert \nabla u\Vert_{L^2}     \Vert A u\Vert_{L^2}     +     \Vert u\Vert_{L^{\infty}}^2     \Vert \nabla u_t\Vert_{L^2}^2     +     \Vert u\Vert_{L^\infty}^2     \Vert \nabla \rho\Vert_{L^2}^2     \\&\indeq     \les      1 + \Vert \nabla u_t\Vert_{L^2}^2     +  C_{\epsilon} e^{2\epsilon t}    ,    \end{split}    \label{EWRTDFBSERSDFGSDFHDSFADSFSDFSVBASWERTDFSGSDFGFDGASFASDFDSFHFDGHFH61}   \end{align} where $\epsilon>0$ is arbitrarily small. In \eqref{EWRTDFBSERSDFGSDFHDSFADSFSDFSVBASWERTDFSGSDFGFDGASFASDFDSFHFDGHFH61}, we also used \eqref{EWRTDFBSERSDFGSDFHDSFADSFSDFSVBASWERTDFSGSDFGFDGASFASDFDSFHFDGHFH08}. Combining \eqref{EWRTDFBSERSDFGSDFHDSFADSFSDFSVBASWERTDFSGSDFGFDGASFASDFDSFHFDGHFH38} and \eqref{EWRTDFBSERSDFGSDFHDSFADSFSDFSVBASWERTDFSGSDFGFDGASFASDFDSFHFDGHFH61} with a uniform Gronwall argument, we get   \begin{align}    \Vert \nabla u_t \Vert_{L^2}    \les     e^{\epsilon t}    \comma t\geq0       \label{EWRTDFBSERSDFGSDFHDSFADSFSDFSVBASWERTDFSGSDFGFDGASFASDFDSFHFDGHFH62}   \end{align} and   \begin{equation}     \int_{0}^{t} \Vert u_{tt}\Vert_{L^2}^2    \les     e^{\epsilon t}    \comma t\geq 0    ,    \llabel{PF eE28 MM23ug TC d j7z 7Av TLa 1A GLiJ 5JwW CiDPyM qa 8 tAK QZ9 cfP 42 kuUz V3h6 GsGFoW m9 h cfj 51d GtW yZ zC5D aVt2 Wi5IIs gD B 0cX LM1 FtE xE RIZI Z0Rt QUtWcU Cm F mSj xvW pZc gl dopk 0D7a EouRku Id O ZdW FOR uqb PY 6HkW OVi7 FuVMLW nx p SaN omk rC5 uI ZK9C jpJy UIeO6k gb 7 tr2 SCY x5F 11 S6Xq OImr s7vv0u vA g rb9 hGP Fnk RM j92H gczJ 66EWRTDFBSERSDFGSDFHDSFADSFSDFSVBASWERTDFSGSDFGFDGASFASDFDSFHFDGHFH63}   \end{equation} where we allow constants to depend on $\epsilon$. Now, consider the stationary, i.e., pointwise in time, Stokes problem   \begin{align}    \begin{split}    &    -\Delta u + \nabla p = - u\cdot\nabla u - u_t + \rho e_2     \\&     u|_{\FGSDFHGFHDFGHDFGH\Omega} = 0     .    \end{split}    \label{EWRTDFBSERSDFGSDFHDSFADSFSDFSVBASWERTDFSGSDFGFDGASFASDFDSFHFDGHFH156}   \end{align} Note that   \begin{align}    \begin{split}    &    \Vert  u\cdot\nabla u + u_t - \rho e_2 \Vert_{H^{1}}    \les    \Vert D(u\cdot\nabla u - u_t)\Vert_{L^2}       + \Vert \rho\Vert_{H^{1}}    \\&\indeq    \les    \Vert D u\Vert_{L^4}^2    + \Vert u\Vert_{L^\infty}       \Vert D^2 u\Vert_{L^2}    + \Vert \nabla u_t\Vert_{L^2}    + e^{\epsilon t}    \\&\indeq    \les    \Vert A^{1/2}u\Vert_{L^2}    \Vert Au\Vert_{L^2}    + \Vert u\Vert_{L^\infty}       \Vert A u\Vert_{L^2}    + \Vert \nabla u_t\Vert_{L^2}    + e^{\epsilon t}    \les    e^{\epsilon t}    ,    \end{split}    \llabel{0kHb BB l QSI OY7 FcX 0c uyDl LjbU 3F6vZk Gb a KaM ufj uxp n4 Mi45 7MoL NW3eIm cj 6 OOS e59 afA hg lt9S BOiF cYQipj 5u N 19N KZ5 Czc 23 1wxG x1ut gJB4ue Mx x 5lr s8g VbZ s1 NEfI 02Rb pkfEOZ E4 e seo 9te NRU Ai nujf eJYa Ehns0Y 6X R UF1 PCf 5eE AL 9DL6 a2vm BAU5Au DD t yQN 5YL LWw PW GjMt 4hu4 FIoLCZ Lx e BVY 5lZ DCD 5Y yBwO IJeH VQsKob Yd q EWRTDFBSERSDFGSDFHDSFADSFSDFSVBASWERTDFSGSDFGFDGASFASDFDSFHFDGHFH157}   \end{align} using \eqref{EWRTDFBSERSDFGSDFHDSFADSFSDFSVBASWERTDFSGSDFGFDGASFASDFDSFHFDGHFH62} in the last step. Applying the $H^{3}$ regularity for the Stokes problem \eqref{EWRTDFBSERSDFGSDFHDSFADSFSDFSVBASWERTDFSGSDFGFDGASFASDFDSFHFDGHFH156}, cf.~\cite[Proposition~3.3]{T4}, leads to   \begin{equation}    \Vert u\Vert_{H^{3}}    + \Vert \nabla p\Vert_{H^{1}}    \les    e^{\epsilon t}     .    \label{EWRTDFBSERSDFGSDFHDSFADSFSDFSVBASWERTDFSGSDFGFDGASFASDFDSFHFDGHFH166}   \end{equation} In order to obtain \eqref{EWRTDFBSERSDFGSDFHDSFADSFSDFSVBASWERTDFSGSDFGFDGASFASDFDSFHFDGHFH14},  we apply $\FGSDFHGFHDFGHDFGH_{ij}$, for $i,j=1,2$, to \eqref{EWRTDFBSERSDFGSDFHDSFADSFSDFSVBASWERTDFSGSDFGFDGASFASDFDSFHFDGHFH01}$_2$, test  with $\FGSDFHGFHDFGHDFGH_{ij}\rho$,  and sum which leads to   \begin{align}   \begin{split}   \frac{1}{2} \frac{d}{dt} \Vert \FGSDFHGFHDFGHDFGH_{ij} \rho \Vert_{L^2}^2    &=   \langle \FGSDFHGFHDFGHDFGH_{ij} (u \cdot \grad \rho),\FGSDFHGFHDFGHDFGH_{ij} \rho\rangle_{L^2}    =   \int_{\Omega} \FGSDFHGFHDFGHDFGH_{ij} u_k \FGSDFHGFHDFGHDFGH_k \rho \FGSDFHGFHDFGHDFGH_{ij} \rho     + 2\int_{\Omega} \FGSDFHGFHDFGHDFGH_{i} u_k \FGSDFHGFHDFGHDFGH_{jk} \rho \FGSDFHGFHDFGHDFGH_{ij} \rho     + \int_{\Omega} u_k \FGSDFHGFHDFGHDFGH_{ijk}\rho \FGSDFHGFHDFGHDFGH_{ij} \rho   ,   \end{split}   \label{EWRTDFBSERSDFGSDFHDSFADSFSDFSVBASWERTDFSGSDFGFDGASFASDFDSFHFDGHFH67}   \end{align} which holds for all $t\geq0$. The last term vanishes due to the incompressibility, while the second is bounded by $C\Vert \grad u \Vert_{L^\infty} \Vert D^2 \rho \Vert_{L^2}^2$.  For the first term on the far right side of \eqref{EWRTDFBSERSDFGSDFHDSFADSFSDFSVBASWERTDFSGSDFGFDGASFASDFDSFHFDGHFH67}, we write   \begin{align}   \begin{split}   &\int_{\Omega} \FGSDFHGFHDFGHDFGH_{ij} u_k \FGSDFHGFHDFGHDFGH_k \rho \FGSDFHGFHDFGHDFGH_{ij} \rho    \les   \Vert \Delta u \Vert_{L^{4}}    \Vert \grad \rho \Vert_{L^4}    \Vert D^2 \rho \Vert_{L^2}   \\&\indeq   \les    (   \Vert \Delta u \Vert_{L^2}^{1/2}   \Vert D^{3} u \Vert_{L^2}^{1/2}   +   \Vert \Delta u \Vert_{L^2}   )   (   \Vert \grad \rho \Vert_{L^2}^{1/2}    \Vert D^2 \rho \Vert_{L^2}^{1/2}   +   \Vert \nabla\rho\Vert_{L^2}   )   \Vert D^2 \rho \Vert_{L^2}   ,   \end{split}    \label{EWRTDFBSERSDFGSDFHDSFADSFSDFSVBASWERTDFSGSDFGFDGASFASDFDSFHFDGHFH68}   \end{align} where we utilized the Gagliardo-Nirenberg inequalities. Now, we use \eqref{EWRTDFBSERSDFGSDFHDSFADSFSDFSVBASWERTDFSGSDFGFDGASFASDFDSFHFDGHFH11} and \eqref{EWRTDFBSERSDFGSDFHDSFADSFSDFSVBASWERTDFSGSDFGFDGASFASDFDSFHFDGHFH166} in \eqref{EWRTDFBSERSDFGSDFHDSFADSFSDFSVBASWERTDFSGSDFGFDGASFASDFDSFHFDGHFH68}, sum in $i$ and~$j$,  and cancel a factor of $\Vert D^2 \rho\Vert_{L^2}$ on both sides to obtain   \begin{align}    \begin{split}    \frac{1}{2} \frac{d}{dt} \Vert D^2 \rho \Vert_{L^2}    &\les    e^{3\epsilon t/2}    +    C_{\epsilon}    e^{\epsilon t}       \Vert D^2 \rho\Vert_{L^2}^{1/2}    +    \Vert \nabla u\Vert_{L^\infty}    \Vert D^2 \rho\Vert_{L^2}       ,       \end{split}    \llabel{fCX 1to mCb Ej 5m1p Nx9p nLn5A3 g7 U v77 7YU gBR lN rTyj shaq BZXeAF tj y FlW jfc 57t 2f abx5 Ns4d clCMJc Tl q kfq uFD iSd DP eX6m YLQz JzUmH0 43 M lgF edN mXQ Pj Aoba 07MY wBaC4C nj I 4dw KCZ PO9 wx 3en8 AoqX 7JjN8K lq j Q5c bMS dhR Fs tQ8Q r2ve 2HT0uO 5W j TAi iIW n1C Wr U1BH BMvJ 3ywmAd qN D LY8 lbx XMx 0D Dvco 3RL9 Qz5eqy wV Y qEN nO8 MHEWRTDFBSERSDFGSDFHDSFADSFSDFSVBASWERTDFSGSDFGFDGASFASDFDSFHFDGHFH66}   \end{align} whence,  applying Young's inequality   \begin{align}    \begin{split}    \frac{1}{2} \frac{d}{dt}     \Vert D^2 \rho \Vert_{L^2}    &\les    e^{2\epsilon t}    +    (\epsilon+\Vert \nabla u\Vert_{L^\infty})    \Vert D^2 \rho\Vert_{L^2}       ,       \end{split}    \llabel{0 PY zeVN i3yb 2msNYY Wz G 2DC PoG 1Vb Bx e9oZ GcTU 3AZuEK bk p 6rN eTX 0DS Mc zd91 nbSV DKEkVa zI q NKU Qap NBP 5B 32Ey prwP FLvuPi wR P l1G TdQ BZE Aw 3d90 v8P5 CPAnX4 Yo 2 q7s yr5 BW8 Hc T7tM ioha BW9U4q rb u mEQ 6Xz MKR 2B REFX k3ZO MVMYSw 9S F 5ek q0m yNK Gn H0qi vlRA 18CbEz id O iuy ZZ6 kRo oJ kLQ0 Ewmz sKlld6 Kr K JmR xls 12K G2 bv8v EWRTDFBSERSDFGSDFHDSFADSFSDFSVBASWERTDFSGSDFGFDGASFASDFDSFHFDGHFH158}   \end{align}  for all $t\geq0$. Applying a Gronwall argument and using \eqref{EWRTDFBSERSDFGSDFHDSFADSFSDFSVBASWERTDFSGSDFGFDGASFASDFDSFHFDGHFH165}, which holds for $t_0>0$ sufficiently large depending on $\epsilon$, we get
  \begin{align}    \begin{split}    \Vert D^2 \rho(t) \Vert_{L^2}    \leq    C_{\epsilon}e^{C \epsilon t}    (\Vert D^{2}\rho\Vert_{L^2}(t_0)+1)    \comma t\geq t_0    .    \end{split}    \label{EWRTDFBSERSDFGSDFHDSFADSFSDFSVBASWERTDFSGSDFGFDGASFASDFDSFHFDGHFH171}   \end{align} On the other hand,  using Gronwall's argument on $[0,t_0]$ with \eqref{EWRTDFBSERSDFGSDFHDSFADSFSDFSVBASWERTDFSGSDFGFDGASFASDFDSFHFDGHFH159} for $T=t_0$, we get   \begin{align}    \begin{split}    \Vert D^2 \rho(t) \Vert_{L^2}    \les    \Vert D^{2}\rho\Vert_{L^2}(0)+1    \comma t\in[0,t_0]    ,    \end{split}    \label{EWRTDFBSERSDFGSDFHDSFADSFSDFSVBASWERTDFSGSDFGFDGASFASDFDSFHFDGHFH172}   \end{align} where the constant depends on $t_0$ and thus on $\epsilon$. Combining \eqref{EWRTDFBSERSDFGSDFHDSFADSFSDFSVBASWERTDFSGSDFGFDGASFASDFDSFHFDGHFH171} and \eqref{EWRTDFBSERSDFGSDFHDSFADSFSDFSVBASWERTDFSGSDFGFDGASFASDFDSFHFDGHFH172}, we finally obtain \eqref{EWRTDFBSERSDFGSDFHDSFADSFSDFSVBASWERTDFSGSDFGFDGASFASDFDSFHFDGHFH14} with $C \epsilon$ replacing $\epsilon$. \colb \par To justify the a~priori bounds above, we consider the sequence of solutions   \begin{align}   \begin{split}   & \unp_t - \Delta \unp + \un \cdot \grad \unp + \grad \Pnp = \thetanp e_2    \\&   \thetanp_t + \un \cdot \grad \thetanp = - \unp \cdot e_2   \\&   \grad \cdot \unp = 0   ,   \end{split}    \label{EWRTDFBSERSDFGSDFHDSFADSFSDFSVBASWERTDFSGSDFGFDGASFASDFDSFHFDGHFH71}   \end{align} with the boundary condition $\unp|_{\FGSDFHGFHDFGHDFGH\Omega}=0$ and with the initial data   \begin{equation}    (\unp(0),\thetanp(0))=(u_0,\rho_0-x_2)    \llabel{LxfJ wrIcU6 Hx p q6p Fy7 Oim mo dXYt Kt0V VH22OC Aj f deT BAP vPl oK QzLE OQlq dpzxJ6 JI z Ujn TqY sQ4 BD QPW6 784x NUfsk0 aM 7 8qz MuL 9Mr Ac uVVK Y55n M7WqnB 2R C pGZ vHh WUN g9 3F2e RT8U umC62V H3 Z dJX LMS cca 1m xoOO 6oOL OVzfpO BO X 5Ev KuL z5s EW 8a9y otqk cKbDJN Us l pYM JpJ jOW Uy 2U4Y VKH6 kVC1Vx 1u v ykO yDs zo5 bz d36q WH1k J7JtkEWRTDFBSERSDFGSDFHDSFADSFSDFSVBASWERTDFSGSDFGFDGASFASDFDSFHFDGHFH72}    ,   \end{equation} for $n\in{\mathbb N}_0$. For $n=0$, we define   \begin{align}   \begin{split}   & u^{(0)}_t - \Delta u^{(0)} + \grad P^{(0)} = \theta^{(0)} e_2    \\&   \theta^{(0)}_t = - u^{(0)} \cdot e_2   \\&   \grad \cdot u^{(0)} = 0   ,   \end{split}    \llabel{g V1 J xqr Fnq mcU yZ JTp9 oFIc FAk0IT A9 3 SrL axO 9oU Z3 jG6f BRL1 iZ7ZE6 zj 8 G3M Hu8 6Ay jt 3flY cmTk jiTSYv CF t JLq cJP tN7 E3 POqG OKe0 3K3WV0 ep W XDQ C97 YSb AD ZUNp 81GF fCPbj3 iq E t0E NXy pLv fo Iz6z oFoF 9lkIun Xj Y yYL 52U bRB jx kQUS U9mm XtzIHO Cz 1 KH4 9ez 6Pz qW F223 C0Iz 3CsvuT R9 s VtQ CcM 1eo pD Py2l EEzL U0USJt Jb 9 zgyEWRTDFBSERSDFGSDFHDSFADSFSDFSVBASWERTDFSGSDFGFDGASFASDFDSFHFDGHFH73}   \end{align} with the boundary condition $u^{(0)}|_{\FGSDFHGFHDFGHDFGH\Omega}=0$ and with the initial data   \begin{equation}    (u^{(0)}(0),\theta^{(0)}(0))=(u_0,\rho_0-x_2)    \llabel{ Gyf iQ4 fo Cx26 k4jL E0ula6 aS I rZQ HER 5HV CE BL55 WCtB 2LCmve TD z Vcp 7UR gI7 Qu FbFw 9VTx JwGrzs VW M 9sM JeJ Nd2 VG GFsi WuqC 3YxXoJ GK w Io7 1fg sGm 0P YFBz X8eX 7pf9GJ b1 o XUs 1q0 6KP Ls MucN ytQb L0Z0Qq m1 l SPj 9MT etk L6 KfsC 6Zob Yhc2qu Xy 9 GPm ZYj 1Go ei feJ3 pRAf n6Ypy6 jN s 4Y5 nSE pqN 4m Rmam AGfY HhSaBr Ls D THC SEl UyR MEWRTDFBSERSDFGSDFHDSFADSFSDFSVBASWERTDFSGSDFGFDGASFASDFDSFHFDGHFH74}   .   \end{equation} Since the system \eqref{EWRTDFBSERSDFGSDFHDSFADSFSDFSVBASWERTDFSGSDFGFDGASFASDFDSFHFDGHFH71} is linear in $(\unp,\thetanp)$, it is easy to construct a local solution $(\unp,\thetanp)$. Also, our a~priori estimates apply to the sequence and  one may pass uniform bounds to the limit. Since the arguments are standard, we omit further details. \end{proof} \par \startnewsection{Interior bounds}{sec04} In this section, we establish the final result on the interior regularity of the second order derivatives. \par \begin{proof}[Proof of Theorem~\ref{T04}] In the proof, we work in the interior of the domain and thus localize the vorticity equation using a smooth cut-off function. With $\Omega'$ as in the statement,  consider a smooth function  $\eta \colon \mathbb{R}^2 \times [0,\infty) \to [0,1]$ such that $\supp \eta \subseteq \Omega\times [t_0/2,\infty)$ with $\eta = 1$ on $\Omega'' \times [3t_0/4,\infty)$,   where $\Omega''$ is an open set such that $\Omega' \Subset \Omega''\Subset \Omega$. In order to prove \eqref{EWRTDFBSERSDFGSDFHDSFADSFSDFSVBASWERTDFSGSDFGFDGASFASDFDSFHFDGHFH15}, we first claim that the vorticity $\omega=\curl u$ satisfies   \begin{align}   \Vert \grad \omega \Vert_{L^p([t_0,T]: L^p(\Omega'))}   \les  T^{1/p} + 1   ,   \label{EWRTDFBSERSDFGSDFHDSFADSFSDFSVBASWERTDFSGSDFGFDGASFASDFDSFHFDGHFH75}   \end{align} where the constant depends on $t_0$, and $\dist(\Omega',\FGSDFHGFHDFGHDFGH\Omega)$. Since \eqref{EWRTDFBSERSDFGSDFHDSFADSFSDFSVBASWERTDFSGSDFGFDGASFASDFDSFHFDGHFH15} and \eqref{EWRTDFBSERSDFGSDFHDSFADSFSDFSVBASWERTDFSGSDFGFDGASFASDFDSFHFDGHFH16} for $p=2$ follow from \eqref{EWRTDFBSERSDFGSDFHDSFADSFSDFSVBASWERTDFSGSDFGFDGASFASDFDSFHFDGHFH08}, we fix $p>2$.  We allow all constants to depend on~$p$ and $t_0$, where $t_0>0$ should be considered small. \par As in \cite{KW2}, we introduce the operator   \begin{align}   R = \FGSDFHGFHDFGHDFGH_1 (I-\Laplace)^{-1}   \llabel{h 66XU 7hNz pZVC5V nV 7 VjL 7kv WKf 7P 5hj6 t1vu gkLGdN X8 b gOX HWm 6W4 YE mxFG 4WaN EbGKsv 0p 4 OG0 Nrd uTe Za xNXq V4Bp mOdXIq 9a b PeD PbU Z4N Xt ohbY egCf xBNttE wc D YSD 637 jJ2 ms 6Ta1 J2xZ PtKnPw AX A tJA Rc8 n5d 93 TZi7 q6Wo nEDLwW Sz e Sue YFX 8cM hm Y6is 15pX aOYBbV fS C haL kBR Ks6 UO qG4j DVab fbdtny fi D BFEWRTDFBSERSDFGSDFHDSFADSFSDFSVBASWERTDFSGSDFGFDGASFASDFDSFHFDGHFH76}   \end{align} and a change of variable   \begin{align}   \zeta = \omega \eta - R(\rho \eta)   .   \label{EWRTDFBSERSDFGSDFHDSFADSFSDFSVBASWERTDFSGSDFGFDGASFASDFDSFHFDGHFH77}   \end{align} We shall apply $R$ to functions which are compactly supported in $\Omega$,  and we consider such functions extended to $\mathbb{R}^2$ by setting them identically to zero on $\Omega^{\text{c}}$. Recalling the vorticity formulation for \eqref{EWRTDFBSERSDFGSDFHDSFADSFSDFSVBASWERTDFSGSDFGFDGASFASDFDSFHFDGHFH01}, \begin{align}   \omega_t - \Laplace \omega + u \cdot \grad \omega = \FGSDFHGFHDFGHDFGH_1 \rho   ,   \llabel{I 7uh B39 FJ 6mYr CUUT f2X38J 43 K yZg 87i gFR 5R z1t3 jH9x lOg1h7 P7 W w8w jMJ qH3 l5 J5wU 8eH0 OogRCv L7 f JJg 1ug RfM XI GSuE Efbh 3hdNY3 x1 9 7jR qeP cdu sb fkuJ hEpw MvNBZV zL u qxJ 9b1 BTf Yk RJLj Oo1a EPIXvZ Aj v Xne fhK GsJ Ga wqjt U7r6 MPoydE H2 6 203 mGi JhF nT NCDB YlnP oKO6Pu XU 3 uu9 mSg 41v ma kk0E WUpS UtGBtD e6 d Kdx ZNT FuT EWRTDFBSERSDFGSDFHDSFADSFSDFSVBASWERTDFSGSDFGFDGASFASDFDSFHFDGHFH78}   \end{align} we have, as in \cite{KW2}, that   \begin{align}   \begin{split}   \zeta_t - \Laplace \zeta + u \cdot \grad \zeta    &=     [R, u \cdot \grad](\rho \eta) - N(\rho \eta)    -    \rho \FGSDFHGFHDFGHDFGH_1 \eta    -    2 \FGSDFHGFHDFGHDFGH_{j}(\omega \FGSDFHGFHDFGHDFGH_{j} \eta)    \\& \indeq    +    \omega(\eta_t + \Laplace \eta + u \cdot \grad \eta)    -    R( \rho(u \cdot \grad \eta))   ,   \end{split}   \label{EWRTDFBSERSDFGSDFHDSFADSFSDFSVBASWERTDFSGSDFGFDGASFASDFDSFHFDGHFH79} \end{align} where   \begin{align}   \llabel{i1 fMcM hq7P Ovf0hg Hl 8 fqv I3R K39 fn 9MaC Zgow 6e1iXj KC 5 lHO lpG pkK Xd Dxtz 0HxE fSMjXY L8 F vh7 dmJ kE8 QA KDo1 FqML HOZ2iL 9i I m3L Kva YiN K9 sb48 NxwY NR0nx2 t5 b WCk x2a 31k a8 fUIa RGzr 7oigRX 5s m 9PQ 7Sr 5St ZE Ymp8 VIWS hdzgDI 9v R F5J 81x 33n Ne fjBT VvGP vGsxQh Al G Fbe 1bQ i6J ap OJJa ceGq 1vvb8r F2 F 3M6 8eD lzG tX tVm5 y1EWRTDFBSERSDFGSDFHDSFADSFSDFSVBASWERTDFSGSDFGFDGASFASDFDSFHFDGHFH80}   N = ((I-\Laplace)^{-1}\Laplace - I)\FGSDFHGFHDFGHDFGH_1   ,   \end{align} which has the property that $\grad N$ is in the Calder\'on-Zygmund class.  The equation \eqref{EWRTDFBSERSDFGSDFHDSFADSFSDFSVBASWERTDFSGSDFGFDGASFASDFDSFHFDGHFH79} is obtained by a direct computation from   \begin{align}   \begin{split}    (\omega \eta)_t - \Delta (\omega \eta) + u\cdot \nabla (\omega \eta)    = \omega\eta_t      + \omega \Delta \eta      - 2 \FGSDFHGFHDFGHDFGH_{j}( \omega\FGSDFHGFHDFGHDFGH_{j} \eta)      + \omega u\cdot \nabla\eta      + \FGSDFHGFHDFGHDFGH_{1}(\rho \eta)      - \rho \FGSDFHGFHDFGHDFGH_{1} \eta   \end{split}   \llabel{4v mwIXa2 OG Y hxU sXJ 0qg l5 ZGAt HPZd oDWrSb BS u NKi 6KW gr3 9s 9tc7 WM4A ws1PzI 5c C O7Z 8y9 lMT LA dwhz Mxz9 hjlWHj bJ 5 CqM jht y9l Mn 4rc7 6Amk KJimvH 9r O tbc tCK rsi B0 4cFV Dl1g cvfWh6 5n x y9Z S4W Pyo QB yr3v fBkj TZKtEZ 7r U fdM icd yCV qn D036 HJWM tYfL9f yX x O7m IcF E1O uL QsAQ NfWv 6kV8Im 7Q 6 GsX NCV 0YP oC jnWn 6L25 qUMTe7 EWRTDFBSERSDFGSDFHDSFADSFSDFSVBASWERTDFSGSDFGFDGASFASDFDSFHFDGHFH69}   \end{align} and   \begin{equation}    (\rho\eta)_t    + u\cdot \nabla (\rho \eta)    = \rho u\cdot \nabla \eta    \llabel{1v a hnH DAo XAb Tc zhPc fjrj W5M5G0 nz N M5T nlJ WOP Lh M6U2 ZFxw pg4Nej P8 U Q09 JX9 n7S kE WixE Rwgy Fvttzp 4A s v5F Tnn MzL Vh FUn5 6tFY CxZ1Bz Q3 E TfD lCa d7V fo MwPm ngrD HPfZV0 aY k Ojr ZUw 799 et oYuB MIC4 ovEY8D OL N URV Q5l ti1 iS NZAd wWr6 Q8oPFf ae 5 lAR 9gD RSi HO eJOW wxLv 20GoMt 2H z 7Yc aly PZx eR uFM0 7gaV 9UIz7S 43 k 5Tr ZEWRTDFBSERSDFGSDFHDSFADSFSDFSVBASWERTDFSGSDFGFDGASFASDFDSFHFDGHFH70}   \end{equation} and then using the identity $\FGSDFHGFHDFGHDFGH_{1} + \Delta R = - N$. Note that both operators $R$ and $N$ commute with translations (and hence derivatives) and they are smoothing of order one, i.e., they satisfy   \begin{equation}    \Vert R f\Vert_{W^{1,p}},    \Vert N f\Vert_{W^{1,p}}    \les \Vert f\Vert_{L^p}    \comma   f\in L^{p}(\mathbb{R}^{2})    ,       \label{EWRTDFBSERSDFGSDFHDSFADSFSDFSVBASWERTDFSGSDFGFDGASFASDFDSFHFDGHFH64}   \end{equation} for $ p\in(1,\infty)$, where the constant depends on~$p$; the property \eqref{EWRTDFBSERSDFGSDFHDSFADSFSDFSVBASWERTDFSGSDFGFDGASFASDFDSFHFDGHFH64} can be verified by computing the Fourier multiplier symbols corresponding to $R$ and $N$ (or cf.~\cite{KW2}). Since $u$ is divergence free, we may rewrite   \begin{align}   [R, u_j \FGSDFHGFHDFGHDFGH_j](\rho \eta)   =   R(u_j \FGSDFHGFHDFGHDFGH_j (\rho \eta)) - u_j \FGSDFHGFHDFGHDFGH_j R(\rho \eta)   =   \FGSDFHGFHDFGHDFGH_j R (u_j \rho \eta) - u_j \FGSDFHGFHDFGHDFGH_j R(\rho \eta)   .   \llabel{iD Mt7 pE NCYi uHL7 gac7Gq yN 6 Z1u x56 YZh 2d yJVx 9MeU OMWBQf l0 E mIc 5Zr yfy 3i rahC y9Pi MJ7ofo Op d enn sLi xZx Jt CjC9 M71v O0fxiR 51 m FIB QRo 1oW Iq 3gDP stD2 ntfoX7 YU o S5k GuV IGM cf HZe3 7ZoG A1dDmk XO 2 KYR LpJ jII om M6Nu u8O0 jO5Nab Ub R nZn 15k hG9 4S 21V4 Ip45 7ooaiP u2 j hIz osW FDu O5 HdGr djvv tTLBjo vL L iCo 6L5 Lwa Pm EWRTDFBSERSDFGSDFHDSFADSFSDFSVBASWERTDFSGSDFGFDGASFASDFDSFHFDGHFH81}   \end{align} \par To acquire $L^p$ space-time estimates, we rewrite our solution as $\zeta = \zeta^{(1)} + \zeta^{(2)}$, where $\zeta^{(1)}$ satisfies   \begin{align}   \begin{split}   &\zeta_t^{(1)} - \Laplace \zeta^{(1)} = f   \\&   \zeta^{(1)} \bigl{|}_{t=0} =0   \end{split}   \llabel{vD6Z pal6 9Ljn11 re T 2CP mvj rL3 xH mDYK uv5T npC1fM oU R RTo Loi lk0 FE ghak m5M9 cOIPdQ lG D LnX erC ykJ C1 0FHh vvnY aTGuqU rf T QPv wEq iHO vO hD6A nXuv GlzVAv pz d Ok3 6ym yUo Fb AcAA BItO es52Vq d0 Y c7U 2gB t0W fF VQZh rJHr lBLdCx 8I o dWp AlD S8C HB rNLz xWp6 ypjuwW mg X toy 1vP bra uH yMNb kUrZ D6Ee2f zI D tkZ Eti Lmg re 1woD juLB EWRTDFBSERSDFGSDFHDSFADSFSDFSVBASWERTDFSGSDFGFDGASFASDFDSFHFDGHFH83}   \end{align} with \begin{align}   f   =   \omega(\eta_t + \Laplace \eta + u \cdot \grad \eta)   -   R( \rho(u \cdot \grad \eta))   -   N(\rho \eta)   -   u \cdot \grad R(\rho \eta)   -   \rho \FGSDFHGFHDFGHDFGH_1 \eta   ,   \llabel{BSdasY Vc F Uhy ViC xB1 5y Ltql qoUh gL3bZN YV k orz wa3 650 qW hF22 epiX cAjA4Z V4 b cXx uB3 NQN p0 GxW2 Vs1z jtqe2p LE B iS3 0E0 NKH gY N50v XaK6 pNpwdB X2 Y v7V 0Ud dTc Pi dRNN CLG4 7Fc3PL Bx K 3Be x1X zyX cj 0Z6a Jk0H KuQnwd Dh P Q1Q rwA 05v 9c 3pnz ttzt x2IirW CZ B oS5 xlO KCi D3 WFh4 dvCL QANAQJ Gg y vOD NTD FKj Mc 0RJP m4HU SQkLnT Q4 EWRTDFBSERSDFGSDFHDSFADSFSDFSVBASWERTDFSGSDFGFDGASFASDFDSFHFDGHFH84}   \end{align} while for $\zeta^{(2)}$ we have   \begin{align}   \begin{split}   &\zeta_t^{(2)} - \Laplace \zeta^{(2)} = \grad \cdot g   \\&   \zeta^{(2)} \bigl{|}_{t=0} =0   ,   \end{split}   \llabel{Y 6CC MvN jAR Zb lir7 RFsI NzHiJl cg f xSC Hts ZOG 1V uOzk 5G1C LtmRYI eD 3 5BB uxZ JdY LO CwS9 lokS NasDLj 5h 8 yni u7h u3c di zYh1 PdwE l3m8Xt yX Q RCA bwe aLi N8 qA9N 6DRE wy6gZe xs A 4fG EKH KQP PP KMbk sY1j M4h3Jj gS U One p1w RqN GA grL4 c18W v4kchD gR x 7Gj jIB zcK QV f7gA TrZx Oy6FF7 y9 3 iuu AQt 9TK Rx S5GO TFGx 4Xx1U3 R4 s 7U1 mpa EWRTDFBSERSDFGSDFHDSFADSFSDFSVBASWERTDFSGSDFGFDGASFASDFDSFHFDGHFH85}   \end{align} where    \begin{align}   g   =   - u \zeta    - 2 \omega \grad \eta    + R( u \rho \eta)   .   \llabel{bpD Hg kicx aCjk hnobr0 p4 c ody xTC kVj 8t W4iP 2OhT RF6kU2 k2 o oZJ Fsq Y4B FS NI3u W2fj OMFf7x Jv e ilb UVT ArC Tv qWLi vbRp g2wpAJ On l RUE PKh j9h dG M0Mi gcqQ wkyunB Jr T LDc Pgn OSC HO sSgQ sR35 MB7Bgk Pk 6 nJh 01P Cxd Ds w514 O648 VD8iJ5 4F W 6rs 6Sy qGz MK fXop oe4e o52UNB 4Q 8 f8N Uz8 u2n GO AXHW gKtG AtGGJs bm z 2qj vSv GBu 5e 4JgEWRTDFBSERSDFGSDFHDSFADSFSDFSVBASWERTDFSGSDFGFDGASFASDFDSFHFDGHFH86}   \end{align} Using the $L^{p}W^{2,p}$ regularity for the nonhomogeneous heat equation and the Gagliardo-Nirenberg inequality, we have   \begin{align}    \begin{split}    \Vert D\zeta^{(1)}\Vert_{L^{p}L^{p}(\mathbb{R}^2\times(0,\infty))}    \les    \Vert D^2\zeta^{(1)}\Vert_{L^{p}L^{2p/(p+2)}(\mathbb{R}^2\times(0,\infty))}    \les    \Vert f\Vert_{L^{p}L^{2p/(p+2)}(\mathbb{R}^2\times(0,\infty))}     ;    \end{split}    \label{EWRTDFBSERSDFGSDFHDSFADSFSDFSVBASWERTDFSGSDFGFDGASFASDFDSFHFDGHFH27}   \end{align} observe that $2p/(p+2)>1$ since $p>2$. Similarly, using the $L^{p}W^{1,p}$ regularity for the nonhomogeneous heat equation in divergence form, we have   \begin{align}    \begin{split}    \Vert D\zeta^{(2)}\Vert_{L^{p}L^{p}(\mathbb{R}^2\times(0,\infty))}    \les
   \Vert g\Vert_{L^{p}L^{p}(\mathbb{R}^2\times(0,\infty))}     .    \end{split}    \label{EWRTDFBSERSDFGSDFHDSFADSFSDFSVBASWERTDFSGSDFGFDGASFASDFDSFHFDGHFH49}   \end{align} For the right-hand side of \eqref{EWRTDFBSERSDFGSDFHDSFADSFSDFSVBASWERTDFSGSDFGFDGASFASDFDSFHFDGHFH27}, we use \eqref{EWRTDFBSERSDFGSDFHDSFADSFSDFSVBASWERTDFSGSDFGFDGASFASDFDSFHFDGHFH64} to obtain    \begin{align}   \begin{split}   \Vert f \Vert_{L^{2p/(p+2)}}   &\les   \Vert \omega \Vert_{L^{2}}   (   \Vert \eta_t \Vert_{L^{p}}   + \Vert \Laplace \eta \Vert_{L^{p}}   + \Vert u \Vert_{L^\infty} \Vert \grad \eta \Vert_{L^{p}}   )   +   \Vert \rho \Vert_{L^{2}} \Vert u \Vert_{L^\infty} \Vert \grad \eta \Vert_{L^{p}}   \\& \indeq   +   \Vert \rho \Vert_{L^{2}} \Vert \eta \Vert_{L^{p}}   +   \Vert u \Vert_{L^\infty} \Vert \rho \Vert_{L^{2}} \Vert \eta \Vert_{L^{p}}   +   \Vert \rho \Vert_{L^{2}} \Vert \FGSDFHGFHDFGHDFGH_1 \eta \Vert_{L^{p}}     \\&   \les   \Vert \omega \Vert_{L^{2}}   +   1   \les 1   ,   \end{split}   \label{EWRTDFBSERSDFGSDFHDSFADSFSDFSVBASWERTDFSGSDFGFDGASFASDFDSFHFDGHFH88}   \end{align} for every $t\geq 0$, where the domains are understood to be $\mathbb{R}^2$. For the right-hand side in \eqref{EWRTDFBSERSDFGSDFHDSFADSFSDFSVBASWERTDFSGSDFGFDGASFASDFDSFHFDGHFH49}, we determine that   \begin{align}   \begin{split}   \Vert g \Vert_{L^p}   &\les   \Vert u\Vert_{L^{2p}}   \Vert \zeta \Vert_{L^{2p}}   +   \Vert \omega \Vert_{L^{2p}}   \Vert \grad \eta \Vert_{L^{2p}}   +   \Vert u \Vert_{L^\infty}    \Vert \rho \Vert_{L^{p}}    \Vert \eta \Vert_{L^{\infty}}   \\&   \les    \Vert u\Vert_{L^{2p}}   \Vert \zeta \Vert_{L^{2p}}   +   \Vert \omega \Vert_{L^{2p}}   +   1   \end{split}   \label{EWRTDFBSERSDFGSDFHDSFADSFSDFSVBASWERTDFSGSDFGFDGASFASDFDSFHFDGHFH90}   \end{align} for every $t\geq 0$, by \eqref{EWRTDFBSERSDFGSDFHDSFADSFSDFSVBASWERTDFSGSDFGFDGASFASDFDSFHFDGHFH64}. To bound the  right-hand side of \eqref{EWRTDFBSERSDFGSDFHDSFADSFSDFSVBASWERTDFSGSDFGFDGASFASDFDSFHFDGHFH90}, we write   \begin{align}   \Vert \zeta \Vert_{L^{q}} \les \Vert \omega\eta \Vert_{L^q}    + \Vert R(\rho\eta)\Vert_{L^q}    \les   \Vert \omega\eta \Vert_{L^q}    + \Vert \rho\Vert_{L^2}    \les   1    \comma q\in[2,\infty)   .   \label{EWRTDFBSERSDFGSDFHDSFADSFSDFSVBASWERTDFSGSDFGFDGASFASDFDSFHFDGHFH92}   \end{align} Therefore, we obtain $\Vert g\Vert_{L^p}\les 1$ for all $t\geq 0$. This fact and \eqref{EWRTDFBSERSDFGSDFHDSFADSFSDFSVBASWERTDFSGSDFGFDGASFASDFDSFHFDGHFH88} imply  by integration that the left-hand sides of  \eqref{EWRTDFBSERSDFGSDFHDSFADSFSDFSVBASWERTDFSGSDFGFDGASFASDFDSFHFDGHFH27} and \eqref{EWRTDFBSERSDFGSDFHDSFADSFSDFSVBASWERTDFSGSDFGFDGASFASDFDSFHFDGHFH49} are bounded by $T^{1/p}$ for $T\geq t_0$, from where   \begin{equation}    \Vert D\zeta\Vert_{L^{p}L^{p}(\mathbb{R}^2\times(0,\infty))}    \les    T^{1/p}    \label{EWRTDFBSERSDFGSDFHDSFADSFSDFSVBASWERTDFSGSDFGFDGASFASDFDSFHFDGHFH65}   \end{equation} and thus   \begin{align}   \Vert \grad (\omega \eta) \Vert_{L^{p}L^{p}(\mathbb{R}^2\times(0,\infty))}   \les   \Vert \grad \zeta \Vert_{L^{p}L^{p}(\mathbb{R}^2\times(0,\infty))}    + \Vert R\nabla(\rho \eta) \Vert_{L^pL^{p}(\mathbb{R}^2\times(0,\infty))}   \les   T^{1/p} + 1   ,   \llabel{L Aqrm gMmS08 ZF s xQm 28M 3z4 Ho 1xxj j8Uk bMbm8M 0c L PL5 TS2 kIQ jZ Kb9Q Ux2U i5Aflw 1S L DGI uWU dCP jy wVVM 2ct8 cmgOBS 7d Q ViX R8F bta 1m tEFj TO0k owcK2d 6M Z iW8 PrK PI1 sX WJNB cREV Y4H5QQ GH b plP bwd Txp OI 5OQZ AKyi ix7Qey YI 9 1Ea 16r KXK L2 ifQX QPdP NL6EJi Hc K rBs 2qG tQb aq edOj Lixj GiNWr1 Pb Y SZe Sxx Fin aK 9Eki CHV2 a13EWRTDFBSERSDFGSDFHDSFADSFSDFSVBASWERTDFSGSDFGFDGASFASDFDSFHFDGHFH93}   \end{align} which proves \eqref{EWRTDFBSERSDFGSDFHDSFADSFSDFSVBASWERTDFSGSDFGFDGASFASDFDSFHFDGHFH75}. The bound \eqref{EWRTDFBSERSDFGSDFHDSFADSFSDFSVBASWERTDFSGSDFGFDGASFASDFDSFHFDGHFH15} then follows by a simple application of the interior elliptic estimate connecting $u$ and $\omega$. \par The pointwise in time bound in \eqref{EWRTDFBSERSDFGSDFHDSFADSFSDFSVBASWERTDFSGSDFGFDGASFASDFDSFHFDGHFH16} follows once we obtain   \begin{align}   \begin{split}   \Vert \grad \omega(t) \Vert_{L^p(\Omega')}    \les    t^{1/4 + 2/p + 1/p^2}    \comma t\geq t_0     ,     \end{split}   \label{EWRTDFBSERSDFGSDFHDSFADSFSDFSVBASWERTDFSGSDFGFDGASFASDFDSFHFDGHFH94}   \end{align} where the constant depends on $t_0$, $p$, and $\dist(\Omega',\FGSDFHGFHDFGHDFGH\Omega)$. To prove \eqref{EWRTDFBSERSDFGSDFHDSFADSFSDFSVBASWERTDFSGSDFGFDGASFASDFDSFHFDGHFH94}, we begin by introducing a second smooth cut-off function $\phi \colon \mathbb{R}^2 \times [0,\infty) \to [0,1]$ for which   \begin{equation}    \supp \phi \subseteq \{\eta=1\}=\{(x,t) \in \mathbb{R}^2 \times [0,\infty) : \eta(x,t) = 1 \}       \llabel{f7G 3G 3 oDK K0i bKV y4 53E2 nFQS 8Hnqg0 E3 2 ADd dEV nmJ 7H Bc1t 2K2i hCzZuy 9k p sHn 8Ko uAR kv sHKP y8Yo dOOqBi hF 1 Z3C vUF hmj gB muZq 7ggW Lg5dQB 1k p Fxk k35 GFo dk 00YD 13qI qqbLwy QC c yZR wHA fp7 9o imtC c5CV 8cEuwU w7 k 8Q7 nCq WkM gY rtVR IySM tZUGCH XV 9 mr9 GHZ ol0 VE eIjQ vwgw 17pDhX JS F UcY bqU gnG V8 IFWb S1GX az0ZTt 81 w 7EWRTDFBSERSDFGSDFHDSFADSFSDFSVBASWERTDFSGSDFGFDGASFASDFDSFHFDGHFH95}   \end{equation} and is such that $\phi=1$ on $\Omega'\times[t_0,\infty)$. Denote   \begin{equation}    \tzeta    = \zeta \phi    .    \llabel{En IhF F72 v2 PkWO Xlkr w6IPu5 67 9 vcW 1f6 z99 lM 2LI1 Y6Na axfl18 gT 0 gDp tVl CN4 jf GSbC ro5D v78Cxa uk Y iUI WWy YDR w8 z7Kj Px7C hC7zJv b1 b 0rF d7n Mxk 09 1wHv y4u5 vLLsJ8 Nm A kWt xuf 4P5 Nw P23b 06sF NQ6xgD hu R GbK 7j2 O4g y4 p4BL top3 h2kfyI 9w O 4Aa EWb 36Y yH YiI1 S3CO J7aN1r 0s Q OrC AC4 vL7 yr CGkI RlNu GbOuuk 1a w LDK 2zl Ka4EWRTDFBSERSDFGSDFHDSFADSFSDFSVBASWERTDFSGSDFGFDGASFASDFDSFHFDGHFH96}   \end{equation} Using \eqref{EWRTDFBSERSDFGSDFHDSFADSFSDFSVBASWERTDFSGSDFGFDGASFASDFDSFHFDGHFH79}, we find that   \begin{align}   \begin{split}   &   \tzeta_t - \Laplace \tzeta + u \cdot \grad \tzeta   \\&\indeq   =    \bigl([R,u\cdot \grad](\rho \eta) - N(\rho \eta)    \bigr)\phi   - R( \rho(u \cdot \grad \eta)) \phi   - 2 \grad \zeta \cdot \grad \phi   + \zeta (\phi_t - \Laplace \phi + u\cdot \grad \phi)   ;   \end{split}   \label{EWRTDFBSERSDFGSDFHDSFADSFSDFSVBASWERTDFSGSDFGFDGASFASDFDSFHFDGHFH97}   \end{align} note that the terms in \eqref{EWRTDFBSERSDFGSDFHDSFADSFSDFSVBASWERTDFSGSDFGFDGASFASDFDSFHFDGHFH79} containing derivatives of $\eta$ vanish after multiplication with $\phi$, except for the term involving $R$, which is a non-local operator. The main reason for introducing the second cut-off function $\phi$ is that  $\zeta$ does not vanish on the boundary $\FGSDFHGFHDFGHDFGH\Omega$ due to nonlocality of $R$; cf.~the definition~\eqref{EWRTDFBSERSDFGSDFHDSFADSFSDFSVBASWERTDFSGSDFGFDGASFASDFDSFHFDGHFH77}. In order to estimate $\nabla \tzeta$, we apply  $\FGSDFHGFHDFGHDFGH_k$ to \eqref{EWRTDFBSERSDFGSDFHDSFADSFSDFSVBASWERTDFSGSDFGFDGASFASDFDSFHFDGHFH97} for $k=1$, $2$, multiply by $|\FGSDFHGFHDFGHDFGH_k\tzeta|^{2p-2}\FGSDFHGFHDFGHDFGH_k\tzeta$, integrate, and sum in $k$ to acquire   \begin{align}   \begin{split}   &\frac{1}{2p}\frac{d}{dt} \sum_k \Vert \FGSDFHGFHDFGHDFGH_k \tzeta \Vert_{L^{2p}}^{2p}   -   \sum_k \int \Laplace \FGSDFHGFHDFGHDFGH_k\tzeta |\FGSDFHGFHDFGHDFGH_k\tzeta|^{2p-2}\FGSDFHGFHDFGHDFGH_k\tzeta   \\&\indeq   =    - \sum_k \int \FGSDFHGFHDFGHDFGH_k(u_j \FGSDFHGFHDFGHDFGH_j \tzeta)   |\FGSDFHGFHDFGHDFGH_k\tzeta|^{2p-2}\FGSDFHGFHDFGHDFGH_k\tzeta   +   \sum_k \int \FGSDFHGFHDFGHDFGH_k (\phi[R,u\cdot \grad](\rho \eta))   |\FGSDFHGFHDFGHDFGH_k\tzeta|^{2p-2}\FGSDFHGFHDFGHDFGH_k\tzeta   \\& \indeq\indeq   -    \sum_k \int \FGSDFHGFHDFGHDFGH_k (\phi N(\rho \eta) )    |\FGSDFHGFHDFGHDFGH_k\tzeta|^{2p-2}\FGSDFHGFHDFGHDFGH_k\tzeta   -   \sum_k \int \FGSDFHGFHDFGHDFGH_k   (\phi R( \rho(u \cdot \grad \eta)))   |\FGSDFHGFHDFGHDFGH_k\tzeta|^{2p-2}\FGSDFHGFHDFGHDFGH_k\tzeta   \\& \indeq\indeq   -2   \sum_k \int \FGSDFHGFHDFGHDFGH_k(\FGSDFHGFHDFGHDFGH_j \zeta \FGSDFHGFHDFGHDFGH_j \phi)|\FGSDFHGFHDFGHDFGH_k\tzeta|^{2p-2}\FGSDFHGFHDFGHDFGH_k\tzeta   \\& \indeq\indeq   +   \sum_k \int \FGSDFHGFHDFGHDFGH_k(\zeta (\phi_t - \Laplace \phi + u\cdot \grad \phi))   |\FGSDFHGFHDFGHDFGH_k\tzeta|^{2p-2}\FGSDFHGFHDFGHDFGH_k\tzeta   .   \end{split}   \label{EWRTDFBSERSDFGSDFHDSFADSFSDFSVBASWERTDFSGSDFGFDGASFASDFDSFHFDGHFH98}   \end{align} The second term on the left-hand side of \eqref{EWRTDFBSERSDFGSDFHDSFADSFSDFSVBASWERTDFSGSDFGFDGASFASDFDSFHFDGHFH98} is estimated as   \begin{align}   \begin{split}    &-\sum_k \int \Laplace \FGSDFHGFHDFGHDFGH_k\tzeta |\FGSDFHGFHDFGHDFGH_k\tzeta|^{2p-2}\FGSDFHGFHDFGHDFGH_k\tzeta    =    \frac{2p-1}{p^2} \sum_k \int \FGSDFHGFHDFGHDFGH_j (|\FGSDFHGFHDFGHDFGH_k\tzeta|^p) \FGSDFHGFHDFGHDFGH_j (|\FGSDFHGFHDFGHDFGH_k\tzeta|^p)    \geq \frac{1}{p}\sum_k \Vert \nabla(|\FGSDFHGFHDFGHDFGH_k\tzeta|^p) \Vert_{L^2}^2    =     \frac{1}{p}    \bar D    ,   \end{split}   \label{EWRTDFBSERSDFGSDFHDSFADSFSDFSVBASWERTDFSGSDFGFDGASFASDFDSFHFDGHFH99}   \end{align}  where we denoted $\bar D= \sum_k \Vert \nabla(|\FGSDFHGFHDFGHDFGH_k\tzeta|^p) \Vert_{L^2}^2$. For the first term on the right-hand side of \eqref{EWRTDFBSERSDFGSDFHDSFADSFSDFSVBASWERTDFSGSDFGFDGASFASDFDSFHFDGHFH98}, we use the incompressibility of $u$ to determine that   \begin{align}   \begin{split}    &-\sum_k \int \FGSDFHGFHDFGHDFGH_k(u_j \FGSDFHGFHDFGHDFGH_j \tzeta) |\FGSDFHGFHDFGHDFGH_k\tzeta|^{2p-2}\FGSDFHGFHDFGHDFGH_k\tzeta    =    -    \sum_k \int \FGSDFHGFHDFGHDFGH_k u_j \FGSDFHGFHDFGHDFGH_j \tzeta   |\FGSDFHGFHDFGHDFGH_k\tzeta|^{2p-2}\FGSDFHGFHDFGHDFGH_k\tzeta   \\& \indeq    \les   \Vert \grad u \Vert_{L^2} \Vert \grad \tzeta \Vert_{L^{4p}}   \sum_k \Vert |\FGSDFHGFHDFGHDFGH_k \tzeta|^{2p-1}\Vert_{L^{4p/(2p-1)}}   \les   o(1) \sum_k \Vert \FGSDFHGFHDFGHDFGH_k \tzeta \Vert_{L^{4p}}^{2p}   \colb    ,   \end{split}   \label{EWRTDFBSERSDFGSDFHDSFADSFSDFSVBASWERTDFSGSDFGFDGASFASDFDSFHFDGHFH100}   \end{align}  where $o(1)$ denotes a function which is bounded on $[0,\infty)$ and converges to~$0$ as $t\to\infty$. Applying the estimate   \begin{align}   \begin{split}   \Vert \FGSDFHGFHDFGHDFGH_k \tzeta \Vert_{L^{4p}}^{2p}    =    \Vert |\FGSDFHGFHDFGHDFGH_k \tzeta|^p \Vert_{L^{4}}^{2}   \les   \Vert |\FGSDFHGFHDFGHDFGH_k \tzeta|^p \Vert_{L^{2}}   \Vert \grad(|\FGSDFHGFHDFGHDFGH_k\tzeta|^p) \Vert_{L^2}   \les   {\bar D}^{1/2}\Vert \FGSDFHGFHDFGHDFGH_k \tzeta \Vert_{L^{2p}}^p   \end{split}   \llabel{ 0h yJnD V4iF xsqO00 1r q CeO AO2 es7 DR aCpU G54F 2i97xS Qr c bPZ 6K8 Kud n9 e6SY o396 Fr8LUx yX O jdF sMr l54 Eh T8vr xxF2 phKPbs zr l pMA ubE RMG QA aCBu 2Lqw Gasprf IZ O iKV Vbu Vae 6a bauf y9Kc Fk6cBl Z5 r KUj htW E1C nt 9Rmd whJR ySGVSO VT v 9FY 4uz yAH Sp 6yT9 s6R6 oOi3aq Zl L 7bI vWZ 18c Fa iwpt C1nd Fyp4oK xD f Qz2 813 6a8 zX wsGl YEWRTDFBSERSDFGSDFHDSFADSFSDFSVBASWERTDFSGSDFGFDGASFASDFDSFHFDGHFH101}   \end{align}  in \eqref{EWRTDFBSERSDFGSDFHDSFADSFSDFSVBASWERTDFSGSDFGFDGASFASDFDSFHFDGHFH100}, we obtain   \begin{align}   \begin{split}   &   -\sum_k \int \FGSDFHGFHDFGHDFGH_k(u_j \FGSDFHGFHDFGHDFGH_j \tzeta) |\FGSDFHGFHDFGHDFGH_k\tzeta|^{2p-2}\FGSDFHGFHDFGHDFGH_k\tzeta   \leq   o(1)   {\bar D}^{1/2}\sum_k \Vert \FGSDFHGFHDFGHDFGH_k \tzeta \Vert_{L^{2p}}^p   \leq   \frac{{\bar D}}{8} + o(1) \sum_k\Vert \FGSDFHGFHDFGHDFGH_k \tzeta \Vert_{L^{2p}}^{2p}   .
  \end{split}   \label{EWRTDFBSERSDFGSDFHDSFADSFSDFSVBASWERTDFSGSDFGFDGASFASDFDSFHFDGHFH102}   \end{align}  For the second term on the right-hand side of \eqref{EWRTDFBSERSDFGSDFHDSFADSFSDFSVBASWERTDFSGSDFGFDGASFASDFDSFHFDGHFH98}, we use integration by parts and write   \begin{align}   \begin{split}   &\sum_k \int \FGSDFHGFHDFGHDFGH_k (\phi [R,u\cdot \grad](\rho \eta) )   |\FGSDFHGFHDFGHDFGH_k\tzeta|^{2p-2}\FGSDFHGFHDFGHDFGH_k\tzeta   =   -(2p-1)\sum_k \int \phi [R,u\cdot \grad](\rho \eta) |\FGSDFHGFHDFGHDFGH_k\tzeta|^{2p-2}\FGSDFHGFHDFGHDFGH_{kk}\tzeta   \\& \indeq   =   -\frac{2p-1}{p} \sum_k \int \phi [R,u\cdot \grad](\rho \eta) |\FGSDFHGFHDFGHDFGH_k\tzeta|^{p-2}\FGSDFHGFHDFGHDFGH_k\tzeta \FGSDFHGFHDFGHDFGH_k (|\FGSDFHGFHDFGHDFGH_k\tzeta|^p)   \\& \indeq   \les   \Vert \phi [R,u\cdot \grad](\rho \eta) \Vert_{L^{2p}}   \sum_k \Vert |\FGSDFHGFHDFGHDFGH_k\tzeta|^{p-1} \Vert_{L^{2p/(p-1)}} \Vert \grad(|\FGSDFHGFHDFGHDFGH_k\tzeta|^p) \Vert_{L^2}   \\& \indeq   \les   {\bar D}^{1/2}\Vert \phi [R,u\cdot \grad](\rho \eta) \Vert_{L^{2p}}   \sum_k \Vert \FGSDFHGFHDFGHDFGH_k\tzeta \Vert_{L^{2p}}^{p-1}   .   \end{split}   \label{EWRTDFBSERSDFGSDFHDSFADSFSDFSVBASWERTDFSGSDFGFDGASFASDFDSFHFDGHFH103}   \end{align} For the second factor in the last expression, we have   \begin{align}   \begin{split}   \Vert \phi [R,u\cdot \grad](\rho \eta) \Vert_{L^{2p}}   &\les   \Vert \phi \Vert_{L^{\infty}} \Vert R(u_j\FGSDFHGFHDFGHDFGH_j (\rho\eta) ) - u_j\FGSDFHGFHDFGHDFGH_j R(\rho\eta) \Vert_{L^{2p}}   \\&   \les   \Vert \FGSDFHGFHDFGHDFGH_jR(u_j (\rho\eta)) \Vert_{L^{2p}} + \Vert u \Vert_{L^\infty}\Vert \FGSDFHGFHDFGHDFGH_j R(\rho\eta) \Vert_{L^{2p}}   \les   \Vert \rho\eta \Vert_{L^{2p}} \les 1   ,   \end{split}   \label{EWRTDFBSERSDFGSDFHDSFADSFSDFSVBASWERTDFSGSDFGFDGASFASDFDSFHFDGHFH104}   \end{align} using the incompressibility of $u$ and    \begin{align}   \Vert \rho(t) \Vert_{L^{2p}} \les 1   ,   \label{EWRTDFBSERSDFGSDFHDSFADSFSDFSVBASWERTDFSGSDFGFDGASFASDFDSFHFDGHFH89}   \end{align} which follows from $   \Vert \rho_0 \Vert_{L^{2p}} \les \Vert \rho_0\Vert_{H^{1}}\les 1 $ and the $L^{p}$ conservation for $\rho$. (Recall that all constants depend on~$p$.) Thus, by \eqref{EWRTDFBSERSDFGSDFHDSFADSFSDFSVBASWERTDFSGSDFGFDGASFASDFDSFHFDGHFH103}--\eqref{EWRTDFBSERSDFGSDFHDSFADSFSDFSVBASWERTDFSGSDFGFDGASFASDFDSFHFDGHFH104}, we have   \begin{align}   \begin{split}   &\sum_k \int \FGSDFHGFHDFGHDFGH_k (\phi [R,u\cdot \grad](\rho \eta) )   |\FGSDFHGFHDFGHDFGH_k\tzeta|^{2p-2}\FGSDFHGFHDFGHDFGH_k\tzeta   \leq   C{\bar D}^{1/2}\sum_k\Vert \FGSDFHGFHDFGHDFGH_k\tzeta \Vert_{L^{2p}}^{p-1}   \\& \indeq   \leq   \frac{{\bar D}}{8} + C \sum_k\Vert \FGSDFHGFHDFGHDFGH_k\tzeta \Vert_{L^{2p}}^{2p-2}   .   \end{split}   \label{EWRTDFBSERSDFGSDFHDSFADSFSDFSVBASWERTDFSGSDFGFDGASFASDFDSFHFDGHFH105}   \end{align} For the third term on the right-hand side of \eqref{EWRTDFBSERSDFGSDFHDSFADSFSDFSVBASWERTDFSGSDFGFDGASFASDFDSFHFDGHFH98}, we obtain   \begin{align}   \begin{split}   &-\sum_k \int \FGSDFHGFHDFGHDFGH_k (\phi N(\rho \eta) )   |\FGSDFHGFHDFGHDFGH_k\tzeta|^{2p-2}\FGSDFHGFHDFGHDFGH_k\tzeta    \les    \sum_k \Vert \FGSDFHGFHDFGHDFGH_k (\phi N(\rho \eta))\Vert_{L^{2p}}           \Vert |\FGSDFHGFHDFGHDFGH_k\tzeta|^{2p-1}\Vert_{L^{2p/(2p-1)}} \\&\indeq    \les    (    \Vert \grad \phi \Vert_{L^\infty} \Vert N(\rho \eta) \Vert_{L^{2p}}    +    \Vert \phi\Vert_{L^\infty} \Vert \grad N(\rho \eta) \Vert_{L^{2p}}     )    \sum_k    \Vert \FGSDFHGFHDFGHDFGH_k\tzeta\Vert_{L^{2p}}^{2p-1}    \les    \Vert \rho\eta\Vert_{L^{2p}}    \sum_k \Vert \FGSDFHGFHDFGHDFGH_k\tzeta\Vert_{L^{2p}}^{2p-1}    \\&\indeq    \les    \sum_k \Vert \FGSDFHGFHDFGHDFGH_k\tzeta\Vert_{L^{2p}}^{2p-1}    .   \end{split}   \llabel{sh9 Gp3Tal nr R UKt tBK eFr 45 43qU 2hh3 WbYw09 g2 W LIX zvQ zMk j5 f0xL seH9 dscinG wu P JLP 1gE N5W qY sSoW Peqj MimTyb Hj j cbn 0NO 5hz P9 W40r 2w77 TAoz70 N1 a u09 boc DSx Gc 3tvK LXaC 1dKgw9 H3 o 2kE oul In9 TS PyL2 HXO7 tSZse0 1Z 9 Hds lDq 0tm SO AVqt A1FQ zEMKSb ak z nw8 39w nH1 Dp CjGI k5X3 B6S6UI 7H I gAa f9E V33 Bk kuo3 FyEi 8Ty2ABEWRTDFBSERSDFGSDFHDSFADSFSDFSVBASWERTDFSGSDFGFDGASFASDFDSFHFDGHFH106}   \end{align} For the fourth term on the right-hand side of~\eqref{EWRTDFBSERSDFGSDFHDSFADSFSDFSVBASWERTDFSGSDFGFDGASFASDFDSFHFDGHFH98}, we observe that   \begin{align}   \begin{split}   &-\sum_k \int \FGSDFHGFHDFGHDFGH_k (\phi R( \rho(u \cdot \grad \eta)))   |\FGSDFHGFHDFGHDFGH_k\tzeta|^{2p-2}\FGSDFHGFHDFGHDFGH_k\tzeta   \les \sum_k \Vert \FGSDFHGFHDFGHDFGH_k (\phi R( \rho(u \cdot \grad \eta))) \Vert_{L^{2p}}   \Vert |\FGSDFHGFHDFGHDFGH_k\tzeta|^{2p-1}\Vert_{L^{2p/(2p-1)}}   \\& \indeq   \les   (   \Vert \phi \Vert_{L^\infty}\Vert \grad R( \rho(u \cdot \grad \eta))\Vert_{L^{2p}}   +   \Vert \grad \phi \Vert_{L^\infty} \Vert R( \rho(u \cdot \grad \eta))\Vert_{L^{2p}}   )   \sum_k \Vert \FGSDFHGFHDFGHDFGH_k\tzeta \Vert_{L^{2p} }^{2p-1}   \\& \indeq   \les   \Vert \rho \Vert_{L^{2p}} \Vert u \Vert_{L^\infty} \Vert \grad \eta \Vert_{L^\infty}   \sum_k\Vert \FGSDFHGFHDFGHDFGH_k\tzeta \Vert_{L^{2p} }^{2p-1}   \les   \sum_k\Vert \FGSDFHGFHDFGHDFGH_k\tzeta \Vert_{L^{2p} }^{2p-1}   ,   \end{split}   \llabel{ PY z SWj Pj5 tYZ ET Yzg6 Ix5t ATPMdl Gk e 67X b7F ktE sz yFyc mVhG JZ29aP gz k Yj4 cEr HCd P7 XFHU O9zo y4AZai SR O pIn 0tp 7kZ zU VHQt m3ip 3xEd41 By 7 2ux IiY 8BC Lb OYGo LDwp juza6i Pa k Zdh aD3 xSX yj pdOw oqQq Jl6RFg lO t X67 nm7 s1l ZJ mGUr dIdX Q7jps7 rc d ACY ZMs BKA Nx tkqf Nhkt sbBf2O BN Z 5pf oqS Xtd 3c HFLN tLgR oHrnNl wR n ylZ EWRTDFBSERSDFGSDFHDSFADSFSDFSVBASWERTDFSGSDFGFDGASFASDFDSFHFDGHFH107}   \end{align} where we used \eqref{EWRTDFBSERSDFGSDFHDSFADSFSDFSVBASWERTDFSGSDFGFDGASFASDFDSFHFDGHFH89}. For the fifth term on the right-hand side of~\eqref{EWRTDFBSERSDFGSDFHDSFADSFSDFSVBASWERTDFSGSDFGFDGASFASDFDSFHFDGHFH98}, we determine that   \begin{align}   \begin{split}   &-2\sum_k \int \FGSDFHGFHDFGHDFGH_k(\FGSDFHGFHDFGHDFGH_j \zeta \FGSDFHGFHDFGHDFGH_j \phi)|\FGSDFHGFHDFGHDFGH_k\tzeta|^{2p-2}\FGSDFHGFHDFGHDFGH_k\tzeta   =   -\frac{2p-1}{p} \sum_k \int \FGSDFHGFHDFGHDFGH_j \zeta \FGSDFHGFHDFGHDFGH_j \phi |\FGSDFHGFHDFGHDFGH_k\tzeta|^{p-2}\FGSDFHGFHDFGHDFGH_k\tzeta   \FGSDFHGFHDFGHDFGH_k(|\FGSDFHGFHDFGHDFGH_k\tzeta|^p)   \\& \indeq   \les   \Vert \FGSDFHGFHDFGHDFGH_j \zeta \FGSDFHGFHDFGHDFGH_j \phi \Vert_{L^4}   \sum_k \Vert |\FGSDFHGFHDFGHDFGH_k\tzeta|^{p-1} \Vert_{L^4}   \Vert \grad (|\FGSDFHGFHDFGHDFGH_k\tzeta|^p) \Vert_{L^2}   \les   {\bar D}^{1/2}    \Vert \nabla \zeta \Vert_{L^4}   \sum_k \Vert |\FGSDFHGFHDFGHDFGH_k\tzeta|^p \Vert_{L^{4(p-1)/p}}^{(p-1)/p}   .   \end{split}   \llabel{NWV NfH vO B1nU Ayjt xTWW4o Cq P Rtu Vua nMk Lv qbxp Ni0x YnOkcd FB d rw1 Nu7 cKy bL jCF7 P4dx j0Sbz9 fa V CWk VFo s9t 2a QIPK ORuE jEMtbS Hs Y eG5 Z7u MWW Aw RnR8 FwFC zXVVxn FU f yKL Nk4 eOI ly n3Cl I5HP 8XP6S4 KF f Il6 2Vl bXg ca uth8 61pU WUx2aQ TW g rZw cAx 52T kq oZXV g0QG rBrrpe iw u WyJ td9 ooD 8t UzAd LSnI tarmhP AW B mnm nsb xLI qXEWRTDFBSERSDFGSDFHDSFADSFSDFSVBASWERTDFSGSDFGFDGASFASDFDSFHFDGHFH108}   \end{align} By the Gagliardo-Nirenberg inequality, we have for the last factor   \begin{align}   \begin{split}   &\Vert |\FGSDFHGFHDFGHDFGH_k\tzeta|^p \Vert_{L^{4(p-1)/p}}^{(p-1)/p}   \les   \left(    \Vert |\FGSDFHGFHDFGHDFGH_k\tzeta|^p \Vert_{L^2}^{p/(2p-2)}   \Vert \grad (|\FGSDFHGFHDFGHDFGH_k\tzeta|^p) \Vert_{L^2}^{(p-2)/(2p-2)}   \right)^{(p-1)/p}   \\& \indeq   \les   \Vert |\FGSDFHGFHDFGHDFGH_k\tzeta|^p \Vert_{L^2}^{1/2}   \Vert \grad (|\FGSDFHGFHDFGHDFGH_k\tzeta|^p) \Vert_{L^2}^{(p-2)/2p}   \les   {\bar D}^{(p-2)/4p} \Vert \FGSDFHGFHDFGHDFGH_k\tzeta \Vert_{L^{2p}}^{p/2}   ,   \end{split}   \llabel{ 4RQS TyoF DIikpe IL h WZZ 8ic JGa 91 HxRb 97kn Whp9sA Vz P o85 60p RN2 PS MGMM FK5X W52OnW Iy o Yng xWn o86 8S Kbbu 1Iq1 SyPkHJ VC v seV GWr hUd ew Xw6C SY1b e3hD9P Kh a 1y0 SRw yxi AG zdCM VMmi JaemmP 8x r bJX bKL DYE 1F pXUK ADtF 9ewhNe fd 2 XRu tTl 1HY JV p5cA hM1J fK7UIc pk d TbE ndM 6FW HA 72Pg LHzX lUo39o W9 0 BuD eJS lnV Rv z8VD V48tEWRTDFBSERSDFGSDFHDSFADSFSDFSVBASWERTDFSGSDFGFDGASFASDFDSFHFDGHFH109}   \end{align} for $k=1,2$. Therefore, by Young's inequality, we conclude that   \begin{align}   \begin{split}   &-2\sum_k \int \FGSDFHGFHDFGHDFGH_k(\FGSDFHGFHDFGHDFGH_j \zeta \FGSDFHGFHDFGHDFGH_j \phi)|\FGSDFHGFHDFGHDFGH_k\tzeta|^{2p-2}\FGSDFHGFHDFGHDFGH_k\tzeta   \les   {\bar D}^{(3p-2)/4p}   \vert \nabla \zeta \vert_{L^4}   \sum_k\vert \FGSDFHGFHDFGHDFGH_k\tzeta \vert_{L^{2p}}^{p/2}   \\& \indeq   \leq \frac{{\bar D}}{8}    + C  \vert \nabla\zeta \vert_{L^4}^{4p/(p+2)}   \sum_k \vert \FGSDFHGFHDFGHDFGH_k\tzeta \vert_{L^{2p}}^{2p^2/(p+2)}   .   \end{split}   \llabel{ Id4Dtg FO O a47 LEH 8Qw nR GNBM 0RRU LluASz jx x wGI BHm Vyy Ld kGww 5eEg HFvsFU nz l 0vg OaQ DCV Ez 64r8 UvVH TtDykr Eu F aS3 5p5 yn6 QZ UcX3 mfET Exz1kv qE p OVV EFP IVp zQ lMOI Z2yT TxIUOm 0f W L1W oxC tlX Ws 9HU4 EF0I Z1WDv3 TP 4 2LN 7Tr SuR 8u Mv1t Lepv ZoeoKL xf 9 zMJ 6PU In1 S8 I4KY 13wJ TACh5X l8 O 5g0 ZGw Ddt u6 8wvr vnDC oqYjJ3 nFEWRTDFBSERSDFGSDFHDSFADSFSDFSVBASWERTDFSGSDFGFDGASFASDFDSFHFDGHFH110}   \end{align} \colb For the final term of \eqref{EWRTDFBSERSDFGSDFHDSFADSFSDFSVBASWERTDFSGSDFGFDGASFASDFDSFHFDGHFH98}, we integrate by parts and obtain   \begin{align}   \begin{split}   &\sum_k \int \FGSDFHGFHDFGHDFGH_k(\zeta (\phi_t - \Laplace \phi + u\cdot \grad \phi))   |\FGSDFHGFHDFGHDFGH_k\tzeta|^{2p-2}\FGSDFHGFHDFGHDFGH_k\tzeta   \\& \indeq   =   -\frac{2p-1}{p}   \sum_k \int \zeta (\phi_t - \Laplace \phi + u\cdot \grad \phi)   |\FGSDFHGFHDFGHDFGH_k\tzeta|^{p-2}\FGSDFHGFHDFGHDFGH_k\tzeta   \FGSDFHGFHDFGHDFGH_k(|\FGSDFHGFHDFGHDFGH_k\tzeta|^p)   \\& \indeq   \les   \Vert \zeta \Vert_{L^{2p}}    \Vert \phi_t - \Laplace \phi + u\cdot \grad \phi \Vert_{L^\infty}   \sum_k
  \Vert  |\FGSDFHGFHDFGHDFGH_k\tzeta|^{p-1} \Vert_{L^{2p/(p-1)}}   \Vert \grad (|\FGSDFHGFHDFGHDFGH_k\tzeta|^p) \Vert_{L^2}   \\& \indeq   \les   {\bar D}^{1/2}   \sum_k \Vert \FGSDFHGFHDFGHDFGH_k\tzeta \Vert_{L^{2p}}^{p-1}   ,   \end{split}   \llabel{ K WMA K8V OeG o4 DKxn EOyB wgmttc ES 8 dmT oAD 0YB Fl yGRB pBbo 8tQYBw bS X 2lc YnU 0fh At myR3 CKcU AQzzET Ng b ghH T64 KdO fL qFWu k07t DkzfQ1 dg B cw0 LSY lr7 9U 81QP qrdf H1tb8k Kn D l52 FhC j7T Xi P7GF C7HJ KfXgrP 4K O Og1 8BM 001 mJ PTpu bQr6 1JQu6o Gr 4 baj 60k zdX oD gAOX 2DBk LymrtN 6T 7 us2 Cp6 eZm 1a VJTY 8vYP OzMnsA qs 3 RL6 xHuEWRTDFBSERSDFGSDFHDSFADSFSDFSVBASWERTDFSGSDFGFDGASFASDFDSFHFDGHFH111}   \end{align} using \eqref{EWRTDFBSERSDFGSDFHDSFADSFSDFSVBASWERTDFSGSDFGFDGASFASDFDSFHFDGHFH41} and \eqref{EWRTDFBSERSDFGSDFHDSFADSFSDFSVBASWERTDFSGSDFGFDGASFASDFDSFHFDGHFH92}. Therefore, we have   \begin{align}   \begin{split}   &\sum_k \int \FGSDFHGFHDFGHDFGH_k(\zeta (\phi_t - \Laplace \phi + u\cdot \grad \phi))   |\FGSDFHGFHDFGHDFGH_k\tzeta|^{2p-2}\FGSDFHGFHDFGHDFGH_k\tzeta   \leq  \frac{ {\bar D} }{8} + C \sum_k \Vert \FGSDFHGFHDFGHDFGH_k\tzeta \Vert_{L^{2p}}^{2p-2}  .   \end{split}   \label{EWRTDFBSERSDFGSDFHDSFADSFSDFSVBASWERTDFSGSDFGFDGASFASDFDSFHFDGHFH112}   \end{align} Introducing   \begin{align}   \psi(t) = \sum_k \int | \FGSDFHGFHDFGHDFGH_k\tzeta |^{2p}   ,   \llabel{ mXN AB 5eXn ZRHa iECOaa MB w Ab1 5iF WGu cZ lU8J niDN KiPGWz q4 1 iBj 1kq bak ZF SvXq vSiR bLTriS y8 Q YOa mQU ZhO rG HYHW guPB zlAhua o5 9 RKU trF 5Kb js KseT PXhU qRgnNA LV t aw4 YJB tK9 fN 7bN9 IEwK LTYGtn Cc c 2nf Mcx 7Vo Bt 1IC5 teMH X4g3JK 4J s deo Dl1 Xgb m9 xWDg Z31P chRS1R 8W 1 hap 5Rh 6Jj yT NXSC Uscx K4275D 72 g pRW xcf AbZ Y7 ApEWRTDFBSERSDFGSDFHDSFADSFSDFSVBASWERTDFSGSDFGFDGASFASDFDSFHFDGHFH113}   \end{align} we may rewrite \eqref{EWRTDFBSERSDFGSDFHDSFADSFSDFSVBASWERTDFSGSDFGFDGASFASDFDSFHFDGHFH98} by applying \eqref{EWRTDFBSERSDFGSDFHDSFADSFSDFSVBASWERTDFSGSDFGFDGASFASDFDSFHFDGHFH99}, \eqref{EWRTDFBSERSDFGSDFHDSFADSFSDFSVBASWERTDFSGSDFGFDGASFASDFDSFHFDGHFH102}, \eqref{EWRTDFBSERSDFGSDFHDSFADSFSDFSVBASWERTDFSGSDFGFDGASFASDFDSFHFDGHFH105}--\eqref{EWRTDFBSERSDFGSDFHDSFADSFSDFSVBASWERTDFSGSDFGFDGASFASDFDSFHFDGHFH112} as   \begin{align}   \begin{split}   (1+\psi)' + \frac{{\bar D}}{2} 	   &\les						   o(1)(1+\psi)   +   (1+\psi)^{(p-1)/p}   +   (1+\psi)^{(2p-1)/2p}   +   \Vert \nabla \zeta \Vert_{L^4}^{4p/(p+2)} (1+\psi)^{p/(p+2)}   .   \end{split}   \label{EWRTDFBSERSDFGSDFHDSFADSFSDFSVBASWERTDFSGSDFGFDGASFASDFDSFHFDGHFH114}   \end{align} It may seem that the first term in \eqref{EWRTDFBSERSDFGSDFHDSFADSFSDFSVBASWERTDFSGSDFGFDGASFASDFDSFHFDGHFH114} causes an exponential increase of $\psi$, but importantly we have the property   \begin{equation}    \int_{0}^{t} (1+\psi)    \les    t    +    \Vert \grad \zeta \Vert_{L^{2p}([0,t];L^{2p})}^{2p}    \les    t    \comma t\geq0    ,    \label{EWRTDFBSERSDFGSDFHDSFADSFSDFSVBASWERTDFSGSDFGFDGASFASDFDSFHFDGHFH115}   \end{equation} where we used \eqref{EWRTDFBSERSDFGSDFHDSFADSFSDFSVBASWERTDFSGSDFGFDGASFASDFDSFHFDGHFH65} in the second step. Now we show that the inequality \eqref{EWRTDFBSERSDFGSDFHDSFADSFSDFSVBASWERTDFSGSDFGFDGASFASDFDSFHFDGHFH115} implies that the growth is algebraic. We divide the inequality  \eqref{EWRTDFBSERSDFGSDFHDSFADSFSDFSVBASWERTDFSGSDFGFDGASFASDFDSFHFDGHFH114} by  $(1+\psi)^{p/(p+2)}$, obtaining   \begin{align}   \begin{split}   (   (1+\psi)^{2/(p+2)}   )'   &\les   o(1) (1+\psi)^{2/(p+2)}   +   (1+\psi)^{(p^2-2)/(p^2+2p)}   +   (1+\psi)^{(3p-2)/(2p(p+2))}   +   \Vert \grad \zeta \Vert_{L^4}^{4p/(p+2)}   ,   \end{split}   \llabel{to 5SpT zO1dPA Vy Z JiW Clu OjO tE wxUB 7cTt EDqcAb YG d ZQZ fsQ 1At Hy xnPL 5K7D 91u03s 8K 2 0ro fZ9 w7T jx yG7q bCAh ssUZQu PK 7 xUe K7F 4HK fr CEPJ rgWH DZQpvR kO 8 Xve aSB OXS ee XV5j kgzL UTmMbo ma J fxu 8gA rnd zS IB0Y QSXv cZW8vo CO o OHy rEu GnS 2f nGEj jaLz ZIocQe gw H fSF KjW 2Lb KS nIcG 9Wnq Zya6qA YM S h2M mEA sw1 8n sJFY Anbr xZEWRTDFBSERSDFGSDFHDSFADSFSDFSVBASWERTDFSGSDFGFDGASFASDFDSFHFDGHFH117}   \end{align} which upon integration and applying Jensen's (or H\"older's) inequality yields for $t\geq0$,   \begin{align}   \begin{split}   &   (1+\psi)^{2/(p+2)}   \\&\indeq   \les   1   +   o(1) \int_0^t (1+\psi)^{2/(p+2)}   +   \int_0^t (1+\psi)^{(p^2-2)/(p(p+2))}   +   \int_0^t (1+\psi)^{(3p-2)/(2p(p+2))}   \\& \indeq \indeq   +   \int_0^t \Vert \grad \zeta \Vert_{L^4}^{4p/(p+2)}   \\& \indeq   \les   1   +   o(1) t^{1-2/(p+2)}\left( \int_0^t (1+\psi)\right)^{2/(p+2)}    +   t^{1-(p^2-2)/(p(p+2))}\left(\int_0^t (1+\psi)\right)^{(p^2-2)/(p(p+2))}    \\& \indeq \indeq   +   t^{1-(3p-2)/(2p(p+2))}\left(\int_0^t (1+\psi)\right)^{(3p-2)/(2p(p+2))}    \\& \indeq \indeq   +   \left(\int_0^t \Vert \grad \zeta \Vert_{L^4}^4\right)^{p/(p+2)}t^{1-p/(p+2)}   ,   \end{split}    \label{EWRTDFBSERSDFGSDFHDSFADSFSDFSVBASWERTDFSGSDFGFDGASFASDFDSFHFDGHFH118}   \end{align} where we also used $\psi(0)=0$ since $\phi$ vanishes in a neighborhood of $\{t=0\}$. Therefore, recalling \eqref{EWRTDFBSERSDFGSDFHDSFADSFSDFSVBASWERTDFSGSDFGFDGASFASDFDSFHFDGHFH115}, we have for $t\geq t_0$ the inequality   \begin{align}   \begin{split}   (1+\psi)^{2/(p+2)}   &\les   t   ,   \end{split}   \llabel{T45Z wB s BvK 9gS Ugy Bk 3dHq dvYU LhWgGK aM f Fk7 8mP 20m eV aQp2 NWIb 6hVBSe SV w nEq bq6 ucn X8 JLkI RJbJ EbwEYw nv L BgM 94G plc lu 2s3U m15E YAjs1G Ln h zG8 vmh ghs Qc EDE1 KnaH wtuxOg UD L BE5 9FL xIp vu KfJE UTQS EaZ6hu BC a KXr lni r1X mL KH3h VPrq ixmTkR zh 0 OGp Obo N6K LC E0Ga Udta nZ9Lvt 1K Z eN5 GQc LQL L0 PEWRTDFBSERSDFGSDFHDSFADSFSDFSVBASWERTDFSGSDFGFDGASFASDFDSFHFDGHFH119}   \end{align} where we used $\int_\delta^t \Vert \grad \zeta \Vert_{L^4}^4\les_{\delta} t$ for $\delta>0$ on the last term in \eqref{EWRTDFBSERSDFGSDFHDSFADSFSDFSVBASWERTDFSGSDFGFDGASFASDFDSFHFDGHFH118}. Raising the resulting inequality to $(p+2)/2$, we obtain   \begin{align}   \begin{split}   1+\psi \leq C_p t^{(p+2)/2}   .   \end{split}   \llabel{9GX uakH m6kqk7 qm X UVH 2bU Hga v0 Wp6Q 8JyI TzlpqW 0Y k 1fX 8gj Gci bR arme Si8l w03Win NX w 1gv vcD eDP Sa bsVw Zu4h aO1V2D qw k JoR Shj MBg ry glA9 3DBd S0mYAc El 5 aEd pII DT5 mb SVuX o8Nl Y24WCA 6d f CVF 6Al a6i Ns 7GCh OvFA hbxw9Q 71 Z RC8 yRi 1zZ dM rpt7 3dou ogkAkG GE 4 87V ii4 Ofw Je sXUR dzVL HU0zms 8W 2 Ztz iEWRTDFBSERSDFGSDFHDSFADSFSDFSVBASWERTDFSGSDFGFDGASFASDFDSFHFDGHFH120}   \end{align} By the support properties of $\phi$ and $\eta$, we get for $p\geq1$   \begin{align}   \begin{split}   \Vert \grad \omega \Vert_{L^{2p}(\Omega')}   &\les   \Vert \grad \zeta \Vert_{L^{2p}(\Omega')} + p^{3/2}   \les   \Vert \grad \tzeta \Vert_{L^{2p}} + 1   \les   \psi^{1/2p} + 1   \les    t^{(p+2)/2}   ,   \end{split}   \llabel{Y5 mw9 aB ZIwk 5WNm vNM2Hd jn e wMR 8qp 2Vv up cV4P cjOG eu35u5 cQ X NTy kfT ZXA JH UnSs 4zxf Hwf10r it J Yox Rto 5OM FP hakR gzDY Pm02mG 18 v mfV 11N n87 zS X59D E0cN 99uEUz 2r T h1F P8x jrm q2 Z7ut pdRJ 2DdYkj y9 J Yko c38 Kdu Z9 vydO wkO0 djhXSx Sv H wJo XE7 9f8 qh iBr8 KYTx OfcYYF sM y j0H vK3 ayU wt 4nA5 H76b wUqyJQEWRTDFBSERSDFGSDFHDSFADSFSDFSVBASWERTDFSGSDFGFDGASFASDFDSFHFDGHFH121}   \end{align} concluding the proof. \end{proof} \par \appendix \startnewsection{Uniform Gronwall inequalities}{seca} In the appendix, we state and prove two  Gronwall inequalities needed in the proof of Theorem~\ref{T01}. The following lemma is used to show \eqref{EWRTDFBSERSDFGSDFHDSFADSFSDFSVBASWERTDFSGSDFGFDGASFASDFDSFHFDGHFH26}. \par \cole \begin{lemma} \label{L01} Assume that $x,y\colon [0,\infty)\to[0,\infty)$ are measurable functions with $x$ differentiable, which satisfy   \begin{equation}    \dot x + y \leq C (x^2 + 1)    \label{EWRTDFBSERSDFGSDFHDSFADSFSDFSVBASWERTDFSGSDFGFDGASFASDFDSFHFDGHFH133}   \end{equation} and   \begin{equation}    x\leq C y    ,    \label{EWRTDFBSERSDFGSDFHDSFADSFSDFSVBASWERTDFSGSDFGFDGASFASDFDSFHFDGHFH141}   \end{equation} for some positive constant $C$. If    \begin{equation}    \int_{0}^{\infty} x(s)\,ds < \infty    ,    \label{EWRTDFBSERSDFGSDFHDSFADSFSDFSVBASWERTDFSGSDFGFDGASFASDFDSFHFDGHFH134}   \end{equation} then $x(t)\leq C$ for $t\geq0$ and   \begin{equation}    \lim_{t\to\infty} x(t)=0    .    \label{EWRTDFBSERSDFGSDFHDSFADSFSDFSVBASWERTDFSGSDFGFDGASFASDFDSFHFDGHFH135}   \end{equation} Moreover,   \begin{equation}    \limsup_{t\to\infty}      \int_{t}^{t+a}        y(s)\,ds      \leq C a    \label{EWRTDFBSERSDFGSDFHDSFADSFSDFSVBASWERTDFSGSDFGFDGASFASDFDSFHFDGHFH140}   \end{equation} for every $a>0$, where the constant in \eqref{EWRTDFBSERSDFGSDFHDSFADSFSDFSVBASWERTDFSGSDFGFDGASFASDFDSFHFDGHFH140} depends on the constants in \eqref{EWRTDFBSERSDFGSDFHDSFADSFSDFSVBASWERTDFSGSDFGFDGASFASDFDSFHFDGHFH133} and \eqref{EWRTDFBSERSDFGSDFHDSFADSFSDFSVBASWERTDFSGSDFGFDGASFASDFDSFHFDGHFH141}. \end{lemma}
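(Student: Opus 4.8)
The plan is to read \eqref{EWRTDFBSERSDFGSDFHDSFADSFSDFSVBASWERTDFSGSDFGFDGASFASDFDSFHFDGHFH133} as a \emph{linear} differential inequality in $x$ whose coefficient is integrable in time, and to exploit the finiteness of $\int_0^\infty x$ in each of the three conclusions. Writing the quadratic term as $Cx^2=(Cx)\,x$, the hypothesis \eqref{EWRTDFBSERSDFGSDFHDSFADSFSDFSVBASWERTDFSGSDFGFDGASFASDFDSFHFDGHFH133} reads $\dot x\le (Cx)\,x+C$, to which a uniform Gronwall argument over unit windows applies. Set $M=\int_0^\infty x\,ds$, finite by \eqref{EWRTDFBSERSDFGSDFHDSFADSFSDFSVBASWERTDFSGSDFGFDGASFASDFDSFHFDGHFH134}. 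On any window $[t,t+1]$ the average of $x$ is $\int_t^{t+1}x\le M$, so there is $s\in[t,t+1]$ with $x(s)\le M$; integrating $\dot x\le (Cx)x+C$ from $s$ to $t+1$ with the integrating factor $\exp(-\int_s^{\,\cdot}Cx)$ gives $x(t+1)\le (x(s)+C)\exp\bigl(C\int_s^{t+1}x\bigr)\le (M+C)e^{CM}$. This yields $x(t)\le C$ for $t\ge1$, and on the compact interval $[0,1]$ the differentiable function $x$ is bounded, so $x(t)\le C$ for all $t\ge0$, which is the first assertion.

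For \eqref{EWRTDFBSERSDFGSDFHDSFADSFSDFSVBASWERTDFSGSDFGFDGASFASDFDSFHFDGHFH135} I would invoke the elementary fact already recorded in the main text: a nonnegative differentiable function with finite integral over $[0,\infty)$ whose derivative is bounded above must tend to $0$. Once the uniform bound $x\le C$ is in hand, \eqref{EWRTDFBSERSDFGSDFHDSFADSFSDFSVBASWERTDFSGSDFGFDGASFASDFDSFHFDGHFH133} gives $\dot x\le C(x^2+1)\le C'$ for $t\ge1$, so $\dot x$ is bounded above; combining this with \eqref{EWRTDFBSERSDFGSDFHDSFADSFSDFSVBASWERTDFSGSDFGFDGASFASDFDSFHFDGHFH134} and applying the elementary lemma to $x$ on $[1,\infty)$ produces $\lim_{t\to\infty}x(t)=0$.

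The last claim \eqref{EWRTDFBSERSDFGSDFHDSFADSFSDFSVBASWERTDFSGSDFGFDGASFASDFDSFHFDGHFH140} follows by integrating \eqref{EWRTDFBSERSDFGSDFHDSFADSFSDFSVBASWERTDFSGSDFGFDGASFASDFDSFHFDGHFH133} directly: from $y\le C(x^2+1)-\dot x$ and integration over $[t,t+a]$ one gets $\int_t^{t+a}y\le C\int_t^{t+a}x^2+Ca+x(t)-x(t+a)$. Letting $t\to\infty$, the boundary terms $x(t)$ and $x(t+a)$ vanish by \eqref{EWRTDFBSERSDFGSDFHDSFADSFSDFSVBASWERTDFSGSDFGFDGASFASDFDSFHFDGHFH135}, while $\int_t^{t+a}x^2\le\bigl(\sup_{[t,t+a]}x\bigr)\int_t^{t+a}x\to0$ since the first factor tends to $0$ by \eqref{EWRTDFBSERSDFGSDFHDSFADSFSDFSVBASWERTDFSGSDFGFDGASFASDFDSFHFDGHFH135} and the second by \eqref{EWRTDFBSERSDFGSDFHDSFADSFSDFSVBASWERTDFSGSDFGFDGASFASDFDSFHFDGHFH134}. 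Only the constant contribution $Ca$ survives, giving $\limsup_{t\to\infty}\int_t^{t+a}y\le Ca$.

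The main obstacle is the boundedness step, because the quadratic term $x^2$ in \eqref{EWRTDFBSERSDFGSDFHDSFADSFSDFSVBASWERTDFSGSDFGFDGASFASDFDSFHFDGHFH133} precludes a naive linear Gronwall bound and a priori admits finite-time blow-up of the comparison (Riccati) solution. The key is that the splitting $Cx^2=(Cx)x$ produces a coefficient $Cx$ that is \emph{integrable in time} by \eqref{EWRTDFBSERSDFGSDFHDSFADSFSDFSVBASWERTDFSGSDFGFDGASFASDFDSFHFDGHFH134}, so its integral over each unit window is uniformly controlled; this is exactly what converts integrability of $x$ into a uniform pointwise bound. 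A secondary point to treat with care is the interval near $t=0$, where the uniform Gronwall estimate is only available from $t=1$ onward, and the bound on $[0,1]$ must be supplied separately from the differentiability of $x$ (and, in the intended application, from the size of the initial data).
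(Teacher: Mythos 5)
Your proof is correct, and its engine is the same as the paper's: split $Cx^2=(Cx)\,x$ so that the Gronwall coefficient $Cx$ is integrable in time by \eqref{EWRTDFBSERSDFGSDFHDSFADSFSDFSVBASWERTDFSGSDFGFDGASFASDFDSFHFDGHFH134}, select a good point in each time window by the mean-value property of $\int x$, and propagate with the integrating factor $e^{C\int x}$. The one genuine divergence is in how \eqref{EWRTDFBSERSDFGSDFHDSFADSFSDFSVBASWERTDFSGSDFGFDGASFASDFDSFHFDGHFH135} is extracted. The paper runs the window argument with an $\epsilon$-dependent window length $b=\sqrt{\epsilon}$: for $t$ large it finds $\tilde t$ in each window with $x(\tilde t)\lesssim \epsilon/b$ and propagates over length $2b$ to get $x\lesssim \epsilon/b+b=2\sqrt{\epsilon}$, obtaining boundedness and the limit in a single pass. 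You instead first prove the uniform bound on unit windows and then invoke the elementary fact (already recorded in the body of the paper for $f=\Vert u\Vert_{L^2}^2$) that a nonnegative integrable function with derivative bounded above tends to zero; this is more modular, at the cost of quoting an auxiliary lemma. Your treatment of \eqref{EWRTDFBSERSDFGSDFHDSFADSFSDFSVBASWERTDFSGSDFGFDGASFASDFDSFHFDGHFH140} coincides with the paper's, written out slightly more carefully (you keep the boundary terms $x(t)-x(t+a)$ explicitly). One cosmetic remark: for $t\in[0,1]$ you appeal to continuity on a compact interval, which makes the constant depend on $\sup_{[0,1]}x$; running the same integrating-factor estimate from $t=0$ gives $x(t)\le (x(0)+C)e^{C\int_0^\infty x}$ there, so the bound depends only on $x(0)$, $C$, and $\int_0^\infty x$, which is the form actually needed in the application.
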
 \colb \par \begin{proof}[Proof of Lemma~\ref{L01}] Let $\epsilon\in(0,1]$, and denote $b=\sqrt{\epsilon}$. Based on \eqref{EWRTDFBSERSDFGSDFHDSFADSFSDFSVBASWERTDFSGSDFGFDGASFASDFDSFHFDGHFH134}, there exists $t_0>0$ such that   \begin{equation}    \int_{t}^{t+2b} x(s) \,ds     \leq \epsilon    \comma t\geq t_0    .    \label{EWRTDFBSERSDFGSDFHDSFADSFSDFSVBASWERTDFSGSDFGFDGASFASDFDSFHFDGHFH148}   \end{equation} Integrating the inequality  $\dot x\les x^2+1$ and using \eqref{EWRTDFBSERSDFGSDFHDSFADSFSDFSVBASWERTDFSGSDFGFDGASFASDFDSFHFDGHFH148}, we obtain   \begin{equation}    x(t_2)
   \leq      e^{C \epsilon} (x(t_1)+ C b)     \les     x(t_1) + b    \label{EWRTDFBSERSDFGSDFHDSFADSFSDFSVBASWERTDFSGSDFGFDGASFASDFDSFHFDGHFH152}   \end{equation} for all $t_1$ and $t_2$ such that $t_1\leq t_2\leq t_1+2b$. By \eqref{EWRTDFBSERSDFGSDFHDSFADSFSDFSVBASWERTDFSGSDFGFDGASFASDFDSFHFDGHFH148}, for every $t\geq t_0$, there exists $\tilde t\in [t,t+b]$ such that   \begin{equation}    x(\tilde t)    \les    \frac{\epsilon}{b}     ,    \llabel{ od O u8U Gjb t6v lc xYZt 6AUx wpYr18 uO v 62v jnw FrC rf Z4nl vJuh 2SpVLO vp O lZn PTG 07V Re ixBm XBxO BzpFW5 iB I O7R Vmo GnJ u8 Axol YAxl JUrYKV Kk p aIk VCu PiD O8 IHPU ndze LPTILB P5 B qYy DLZ DZa db jcJA T644 Vp6byb 1g 4 dE7 Ydz keO YL hCRe Ommx F9zsu0 rp 8 Ajz d2v Heo 7L 5zVn L8IQ WnYATK KV 1 f14 s2J geC b3 v9UJ djNN VBINix 1q 5 oyr EWRTDFBSERSDFGSDFHDSFADSFSDFSVBASWERTDFSGSDFGFDGASFASDFDSFHFDGHFH153}   \end{equation} and thus applying \eqref{EWRTDFBSERSDFGSDFHDSFADSFSDFSVBASWERTDFSGSDFGFDGASFASDFDSFHFDGHFH152} with $t_1=\tilde t$ leads to   \begin{equation}    x(t_2)    \les     \frac{\epsilon}{b}    + b    \les    \sqrt{\epsilon}    \comma \tilde t\leq t_2 \leq t_1+2b    ,    \label{EWRTDFBSERSDFGSDFHDSFADSFSDFSVBASWERTDFSGSDFGFDGASFASDFDSFHFDGHFH154}   \end{equation} where we used $b=\sqrt{\epsilon}$ in the last step. The inequality \eqref{EWRTDFBSERSDFGSDFHDSFADSFSDFSVBASWERTDFSGSDFGFDGASFASDFDSFHFDGHFH154} holds for all $t_2\geq t_0+a$, and since $\epsilon>0$ is arbitrarily small,  \eqref{EWRTDFBSERSDFGSDFHDSFADSFSDFSVBASWERTDFSGSDFGFDGASFASDFDSFHFDGHFH135} follows. The inequality \eqref{EWRTDFBSERSDFGSDFHDSFADSFSDFSVBASWERTDFSGSDFGFDGASFASDFDSFHFDGHFH140} is obtained by integrating $y\les x^2+1$ and using~\eqref{EWRTDFBSERSDFGSDFHDSFADSFSDFSVBASWERTDFSGSDFGFDGASFASDFDSFHFDGHFH135}. \end{proof} \par The next Gronwall-type lemma is needed to establish \eqref{EWRTDFBSERSDFGSDFHDSFADSFSDFSVBASWERTDFSGSDFGFDGASFASDFDSFHFDGHFH163} and \eqref{EWRTDFBSERSDFGSDFHDSFADSFSDFSVBASWERTDFSGSDFGFDGASFASDFDSFHFDGHFH37}, which are necessary for the proofs of \eqref{EWRTDFBSERSDFGSDFHDSFADSFSDFSVBASWERTDFSGSDFGFDGASFASDFDSFHFDGHFH08} and \eqref{EWRTDFBSERSDFGSDFHDSFADSFSDFSVBASWERTDFSGSDFGFDGASFASDFDSFHFDGHFH10}. \par \cole \begin{lemma} \label{L02} Assume that $x,y\colon [0,\infty)\to[0,\infty)$ are measurable functions with $x$ differentiable, which satisfy   \begin{equation}    \dot x + y \leq \phi(t) (x + 1)    \label{EWRTDFBSERSDFGSDFHDSFADSFSDFSVBASWERTDFSGSDFGFDGASFASDFDSFHFDGHFH143}   \end{equation} and   \begin{equation}    x\leq C y    ,    \label{EWRTDFBSERSDFGSDFHDSFADSFSDFSVBASWERTDFSGSDFGFDGASFASDFDSFHFDGHFH142}   \end{equation} where $\phi\colon[0,\infty)\to[0,\infty)$ is such that $\phi(t)\leq C$ for $t\in [0,\infty)$ and $\phi(t)\to0$, as $t\to\infty$. If also $x(0)\leq C$,  then    \begin{equation}    x(t)\les 1    \comma t\in[0,\infty)    \llabel{SBM 2Xt gr v8RQ MaXk a4AN9i Ni n zfH xGp A57 uA E4jM fg6S 6eNGKv JL 3 tyH 3qw dPr x2 jFXW 2Wih pSSxDr aA 7 PXg jK6 GGl Og 5PkR d2n5 3eEx4N yG h d8Z RkO NMQ qL q4sE RG0C ssQkdZ Ua O vWr pla BOW rS wSG1 SM8I z9qkpd v0 C RMs GcZ LAz 4G k70e O7k6 df4uYn R6 T 5Du KOT say 0D awWQ vn2U OOPNqQ T7 H 4Hf iKY Jcl Rq M2g9 lcQZ cvCNBP 2B b tjv VYj ojr rhEWRTDFBSERSDFGSDFHDSFADSFSDFSVBASWERTDFSGSDFGFDGASFASDFDSFHFDGHFH53}   \end{equation} and    \begin{equation}    \lim_{t\to\infty} x(t)=0    \label{EWRTDFBSERSDFGSDFHDSFADSFSDFSVBASWERTDFSGSDFGFDGASFASDFDSFHFDGHFH145}   \end{equation} as well as   \begin{equation}    \limsup_{t\to\infty}      \int_{t}^{t+a}        y(s)\,ds     = 0     ,    \label{EWRTDFBSERSDFGSDFHDSFADSFSDFSVBASWERTDFSGSDFGFDGASFASDFDSFHFDGHFH146}   \end{equation} for every $a> 0$. \end{lemma}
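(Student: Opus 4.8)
The plan is to collapse the two hypotheses into a single \emph{damped} differential inequality and then extract all three conclusions from the resulting Duhamel formula. First I would use the coercivity bound \eqref{EWRTDFBSERSDFGSDFHDSFADSFSDFSVBASWERTDFSGSDFGFDGASFASDFDSFHFDGHFH142} to eliminate $y$ from \eqref{EWRTDFBSERSDFGSDFHDSFADSFSDFSVBASWERTDFSGSDFGFDGASFASDFDSFHFDGHFH143}: since $y\ge x/C$, inequality \eqref{EWRTDFBSERSDFGSDFHDSFADSFSDFSVBASWERTDFSGSDFGFDGASFASDFDSFHFDGHFH143} gives
\[
\dot x \le \Bigl(\phi(t)-\frac1C\Bigr)x + \phi(t) .
\]
Because $\phi(t)\to0$, there is a time $T_0$ with $\phi(t)\le 1/(2C)$ for $t\ge T_0$, and on $[T_0,\infty)$ the inequality becomes $\dot x + \lambda x \le \phi(t)$ with $\lambda = 1/(2C)>0$. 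Obtaining this damped inequality is the only genuine idea in the proof; the point is that \eqref{EWRTDFBSERSDFGSDFHDSFADSFSDFSVBASWERTDFSGSDFGFDGASFASDFDSFHFDGHFH142} supplies exactly the linear dissipation needed to overcome the growth term $\phi x$ once $\phi$ has become small, which is what will make the bound uniform rather than merely exponential.

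Next I would treat the transient on $[0,T_0]$ separately. Discarding $y\ge0$ in \eqref{EWRTDFBSERSDFGSDFHDSFADSFSDFSVBASWERTDFSGSDFGFDGASFASDFDSFHFDGHFH143} and using $\phi\le C$, Gronwall's inequality yields $x(t)+1 \le (x(0)+1)e^{Ct}$, so in particular $x(T_0)\les 1$ since $x(0)\le C$. For $t\ge T_0$ I multiply $\dot x + \lambda x \le \phi$ by the integrating factor $e^{\lambda t}$ and integrate, obtaining
\[
x(t) \le e^{-\lambda(t-T_0)} x(T_0) + e^{-\lambda t}\int_{T_0}^{t} e^{\lambda s}\phi(s)\,ds .
\]
Bounding $\phi\le C$ in the last integral shows $x(t)\les 1$ uniformly in $t$, which establishes the asserted uniform bound.

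For the limit \eqref{EWRTDFBSERSDFGSDFHDSFADSFSDFSVBASWERTDFSGSDFGFDGASFASDFDSFHFDGHFH145} I would split the Duhamel integral at a time $T_1\ge T_0$ chosen so that $\phi\le\epsilon$ on $[T_1,\infty)$: the contribution of $[T_0,T_1]$ is annihilated by the decaying prefactor $e^{-\lambda t}$, while the contribution of $[T_1,t]$ is at most $\epsilon/\lambda$; together with $e^{-\lambda(t-T_0)}x(T_0)\to0$ and the arbitrariness of $\epsilon$, this gives $x(t)\to0$. Finally, integrating \eqref{EWRTDFBSERSDFGSDFHDSFADSFSDFSVBASWERTDFSGSDFGFDGASFASDFDSFHFDGHFH143} over $[t,t+a]$ and dropping the nonnegative boundary term $x(t+a)$ produces
\[
\int_{t}^{t+a} y(s)\,ds \le x(t) + \int_{t}^{t+a}\phi(s)(x(s)+1)\,ds ,
\]
whose right-hand side tends to $0$ as $t\to\infty$, since $x(t)\to0$, $x$ is bounded, and $\phi(t)\to0$ so that $\phi(s)(x(s)+1)\to0$ uniformly on the fixed-length window $[t,t+a]$; this is precisely \eqref{EWRTDFBSERSDFGSDFHDSFADSFSDFSVBASWERTDFSGSDFGFDGASFASDFDSFHFDGHFH146}. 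I expect no serious obstacle beyond the initial reformulation: once the damped inequality is in hand, the remaining steps are routine ODE estimates parallel to those in Lemma~\ref{L01}.
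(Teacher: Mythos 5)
Your proposal is correct and follows essentially the same route as the paper: both use the coercivity $x\le Cy$ to absorb the $\phi x$ term once $\phi$ is small, yielding a damped linear inequality from which boundedness and decay follow, and both obtain the last claim by integrating the differential inequality over $[t,t+a]$. The only (immaterial) difference is that you make the decay quantitative via an explicit integrating-factor/Duhamel formula, whereas the paper argues more qualitatively that $x$ decays exponentially as long as it exceeds $\epsilon$.
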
 \colb \par \begin{proof}[Proof of Lemma~\ref{L02}] First, by the boundedness of $\phi$, we have   \begin{equation}    x(t) \les 1    \comma t\in[0,T]      ,    \llabel{ 78tW R886 ANdxeA SV P hK3 uPr QRs 6O SW1B wWM0 yNG9iB RI 7 opG CXk hZp Eo 2JNt kyYO pCY9HL 3o 7 Zu0 J9F Tz6 tZ GLn8 HAes o9umpy uc s 4l3 CA6 DCQ 0m 0llF Pbc8 z5Ad2l GN w SgA XeN HTN pw dS6e 3ila 2tlbXN 7c 1 itX aDZ Fak df Jkz7 TzaO 4kbVhn YH f Tda 9C3 WCb tw MXHW xoCC c4Ws2C UH B sNL FEf jS4 SG I4I4 hqHh 2nCaQ4 nM p nzY oYE 5fD sX hCHJ zTQOEWRTDFBSERSDFGSDFHDSFADSFSDFSVBASWERTDFSGSDFGFDGASFASDFDSFHFDGHFH161}   \end{equation} for every $T>0$, where the constant depends on $T$. Next, there exists $t_0>0$ such that   \begin{equation}    \dot x + \frac12 y     \leq \phi(t)    \comma t\geq t_0    ,    \llabel{ cbKmvE pl W Und VUo rrq iJ zRqT dIWS QBL96D FU d 64k 5gv Qh0 dj rGlw 795x V6KzhT l5 Y FtC rpy bHH 86 h3qn Lyzy ycGoqm Cb f h9h prB CQp Fe CxhU Z2oJ F3aKgQ H8 R yIm F9t Eks gP FMMJ TAIy z3ohWj Hx M R86 KJO NKT c3 uyRN nSKH lhb11Q 9C w rf8 iiX qyY L4 zh9s 8NTE ve539G zL g vhD N7F eXo 5k AWAT 6Vrw htDQwy tu H Oa5 UIO Exb Mp V2AH puuC HWItfO ruEWRTDFBSERSDFGSDFHDSFADSFSDFSVBASWERTDFSGSDFGFDGASFASDFDSFHFDGHFH144}   \end{equation} which is obtained by choosing $t_0$ so large that the term containing $x$ on the right-hand side of \eqref{EWRTDFBSERSDFGSDFHDSFADSFSDFSVBASWERTDFSGSDFGFDGASFASDFDSFHFDGHFH143} is absorbed in the half of the second term on the left-hand side, cf.~\eqref{EWRTDFBSERSDFGSDFHDSFADSFSDFSVBASWERTDFSGSDFGFDGASFASDFDSFHFDGHFH142}. \par Let $\epsilon>0$. Then there exists $t_1\geq t_0$ such that   \begin{equation}    \dot x + \frac1C x \leq \frac{\epsilon}{2}    \comma t\geq t_1    ,    \llabel{ x YfF qsa P8u fH F16C EBXK tj6ohs uv T 8BB PDN gGf KQ g6MB K2x9 jqRbHm jI U EKB Im0 bbK ac wqIX ijrF uq9906 Vy m 3Ve 1gB dMy 9i hnbA 3gBo 5aBKK5 gf J SmN eCW wOM t9 xutz wDkX IY7nNh Wd D ppZ UOq 2Ae 0a W7A6 XoIc TSLNDZ yf 2 XjB cUw eQT Zt cuXI DYsD hdAu3V MB B BKW IcF NWQ dO u3Fb c6F8 VN77Da IH E 3MZ luL YvB mN Z2wE auXX DGpeKR nw o UVB 2oMEWRTDFBSERSDFGSDFHDSFADSFSDFSVBASWERTDFSGSDFGFDGASFASDFDSFHFDGHFH149}   \end{equation} which shows that as long as $x\geq \epsilon$, we have $\dot x+(1/C)x\leq 0$, implying an exponential decay of $x$. Therefore, by increasing $t_1$, we can assume that   \begin{equation}    x(t) \leq \epsilon    \comma t\geq t_1       .    \label{EWRTDFBSERSDFGSDFHDSFADSFSDFSVBASWERTDFSGSDFGFDGASFASDFDSFHFDGHFH147}   \end{equation} Since $\epsilon>0$ was arbitrary, we obtain \eqref{EWRTDFBSERSDFGSDFHDSFADSFSDFSVBASWERTDFSGSDFGFDGASFASDFDSFHFDGHFH145}. To prove \eqref{EWRTDFBSERSDFGSDFHDSFADSFSDFSVBASWERTDFSGSDFGFDGASFASDFDSFHFDGHFH146}, note that we may assume   \begin{equation}    \dot x + \frac12 y \leq \frac{\epsilon}{2}    \comma t\geq t_1    ,    \label{EWRTDFBSERSDFGSDFHDSFADSFSDFSVBASWERTDFSGSDFGFDGASFASDFDSFHFDGHFH150}   \end{equation} by increasing $t_1$ if necessary. Integrating \eqref{EWRTDFBSERSDFGSDFHDSFADSFSDFSVBASWERTDFSGSDFGFDGASFASDFDSFHFDGHFH150} between $t$ and $t+a$, we get   \begin{equation}    \int_{t}^{t+a} y(s)\,ds    \les x(t) + \epsilon a    \les \epsilon (1+a)    \comma t\geq t_1     ,    \llabel{ VVe hW 0ejG gbgz Iw9FwQ hN Y rFI 4pT lqr Wn Xzz2 qBba lv3snl 2j a vzU Snc pwh cG J0Di 3Lr3 rs6F23 6o b LtD vN9 KqA pO uold 3sec xqgSQN ZN f w5t BGX Pdv W0 k6G4 Byh9 V3IicO nR 2 obf x3j rwt 37 u82f wxwj SmOQq0 pq 4 qfv rN4 kFW hP HRmy lxBx 1zCUhs DN Y INv Ldt VDG 35 kTMT 0ChP EdjSG4 rW N 6v5 IIM TVB 5y cWuY OoU6 Sevyec OT f ZJv BjS ZZk M6 8vEWRTDFBSERSDFGSDFHDSFADSFSDFSVBASWERTDFSGSDFGFDGASFASDFDSFHFDGHFH151}\end{equation} where we used \eqref{EWRTDFBSERSDFGSDFHDSFADSFSDFSVBASWERTDFSGSDFGFDGASFASDFDSFHFDGHFH147} in the last step. Since $\epsilon>0$ was arbitrary, we obtain \eqref{EWRTDFBSERSDFGSDFHDSFADSFSDFSVBASWERTDFSGSDFGFDGASFASDFDSFHFDGHFH146}. \end{proof} \par \section*{Acknowledgments} IK and DM were supported in part by the NSF grant DMS-1907992.  \par 
 \end{document}